\title{\sc On the Maxwell Constants in 3D}
\def\shorttitle{On the Maxwell Constants in 3D}
\def\pauthor{Dirk Pauly}
\def\mylabelonoff{off}
\def\allowdisbrk{no}
\author{{\sf\pauthor}}
\markboth{\pauthor}{\shorttitle}
\numberwithin{equation}{section}
\newenvironment{acknow}{{\vspace*{1cm}\noindent\bf Acknowledgements }}{}
\newcommand{\bewboxw}{\mbox{}\hfill $\square$ \\}
\newenvironment{proof}{{\noindent\bf Proof }}{\bewboxw}
\newcommand{\keywords}[1]{{\noindent\bf Key Words }#1}
\newcommand{\mylabel}[1]{\label{#1}\fbox{{\rm #1}}}}{\newcommand{\mylabel}[1]{\label{#1}\makebox[0mm][]{}}}
\newcommand{\ds}{\displaystyle}
\newcommand{\ol}{\overline}
\newcommand{\ul}{\underline}
\newcommand{\nz}{\mathbb{N}}
\newcommand{\rz}{\mathbb{R}}
\newcommand{\rt}{\rz^3}
\newcommand{\rttt}{\rz^{3\times3}}
\newcommand{\rN}{\rz^N}
\DeclareMathOperator{\p}{\partial}
\newcommand{\na}{\nabla}
\DeclareMathOperator{\ed}{d}
\DeclareMathOperator{\cd}{\delta}
\DeclareMathOperator{\rot}{rot}
\renewcommand{\div}{\operatorname{div}}
\newcommand{\T}{T}
\newcommand{\eps}{\varepsilon}
\newcommand{\epsmo}{\eps^{-1}}
\newcommand{\mumo}{\mu^{-1}}
\newcommand{\epsu}{\ul{\eps}}
\newcommand{\epso}{\ol{\eps}}
\newcommand{\epsh}{\hat{\eps}}
\newcommand{\epsch}{\check{\eps}}
\newcommand{\sigmap}{\sigma_{\mathtt{p}}}
\newcommand{\om}{\Omega}
\newcommand{\pom}{\p\!\om}
\newcommand{\ga}{\Gamma}
\newcommand{\gan}{\ga_{n}}
\newcommand{\equi}{\Leftrightarrow}
\def\impl{\Rightarrow}
\newcommand{\qequi}{\quad\equi\quad}
\DeclareMathOperator{\supp}{supp}
\DeclareMathOperator{\dist}{dist}
\newcommand{\zmat}[4]{\begin{bmatrix}#1&#2\\#3&#4\end{bmatrix}}
\DeclareMathOperator{\id}{id}
\def\set#1#2{\{#1\,:\,#2\}}
\newcommand{\cp}{c_{\mathtt{p}}}
\newcommand{\cpeps}{c_{\mathtt{p},\eps}}
\newcommand{\cm}{c_{\mathtt{m}}}
\newcommand{\cmrot}{c_{\mathtt{m},\rot}}
\DeclareMathOperator{\Lebesgue}{\mathsf{L}}
\newcommand{\Lgen}[2]{\Lebesgue^{#1}_{#2}}
\def\Li{\Lgen{\infty}{}}
\def\Lt{\Lgen{2}{}}
\def\Ltom{\Lt(\om)}
\def\Ltepsom{\Lgen{2}{\eps}(\om)}
\def\Ltepsmoom{\Lgen{2}{\epsmo}(\om)}
\def\Ltmuom{\Lgen{2}{\mu}(\om)}
\def\Ltmumoom{\Lgen{2}{\mumo}(\om)}
\newcommand{\qLt}[1]{\Lgen{2,#1}{}}
\newcommand{\Ltq}{\qLt{q}}
\newcommand{\Ltqpo}{\qLt{q+1}}
\newcommand{\Ltqmo}{\qLt{q-1}}
\newcommand{\Ltqom}{\Ltq(\om)}
\newcommand{\Ltqpoom}{\Ltqpo(\om)}
\newcommand{\Ltqmoom}{\Ltqmo(\om)}
\DeclareMathOperator{\DSobolev}{\mathsf{D}}
\newcommand{\Dgen}[3]{\overset{#3}{\DSobolev}{}^{#1}_{#2}}
\newcommand{\qD}[1]{\Dgen{#1}{}{}}
\newcommand{\Dq}{\qD{q}}
\newcommand{\Dqom}{\Dq(\om)}
\DeclareMathOperator{\DeSobolev}{\Delta}
\newcommand{\Degen}[3]{\overset{#3}{\DeSobolev}{}^{#1}_{#2}}
\newcommand{\qDe}[1]{\Degen{#1}{}{}}
\newcommand{\Deq}{\qDe{q}}
\newcommand{\Deqpo}{\qDe{q+1}}
\newcommand{\Deqom}{\Deq(\om)}
\newcommand{\Deqpoom}{\Deqpo(\om)}
\DeclareMathOperator{\Sobolev}{\mathsf{H}}
\newcommand{\Hgen}[3]{\overset{#3}{\Sobolev}{}^{#1}_{#2}}
\def\Ho{\Hgen{1}{}{}}
\def\Hoom{\Ho(\om)}
\newcommand{\Cgen}[2]{\overset{#2}{\Cont}{}^{#1}}
\newcommand{\scp}[2]{\left\langle#1,#2\right\rangle}
\newcommand{\scps}[2]{\langle#1,#2\rangle}
\newcommand{\scpLtom}[2]{\scp{#1}{#2}_{\Ltom}}
\newcommand{\norm}[1]{\left|\normdst\left|#1\right|\normdst\right|}
\newtheorem{lem}{Lemma}
\newtheorem{theo}[lem]{Theorem}
\newtheorem{cor}[lem]{Corollary}
\newtheorem{rem}[lem]{Remark}
\DeclareMathOperator{\rotspace}{\mathsf{R}}
\newcommand{\rotgen}[2]{\overset{#2}{\rotspace}{}_{#1}}
\newcommand{\rotgenom}[2]{\rotgen{#1}{#2}(\om)}
\newcommand{\rom}{\rotgenom{}{}}
\newcommand{\rzom}{\rotgenom{0}{}}
\DeclareMathOperator{\divspace}{\mathsf{D}}
\newcommand{\divgen}[2]{\overset{#2}{\divspace}{}_{#1}}
\newcommand{\divgenom}[2]{\divgen{#1}{#2}(\om)}
\newcommand{\dom}{\divgenom{}{}}
\newcommand{\dzom}{\divgenom{0}{}}
\newcommand{\hoom}{\Hoom}
\DeclareMathOperator{\harmonic}{\mathcal{H}}
\newcommand{\harm}[2]{\harmonic^{#1}_{#2}}
\newcommand{\harmom}[2]{\harm{#1}{#2}(\om)}
\newcommand{\harmdnom}{\harmom{}{\mathtt{DN}}}
\newcommand{\harmndom}{\harmom{}{\mathtt{ND}}}
\newcommand{\harmdepsom}{\harmom{}{\mathtt{D},\eps}}
\newcommand{\harmdnepsom}{\harmom{}{\mathtt{DN},\eps}}
\newcommand{\harmndepsom}{\harmom{}{\mathtt{ND},\eps}}
\newcommand{\harmnepsom}{\harmom{}{\mathtt{N},\eps}}
\newcommand{\pid}{\pi_{\mathtt{D}}}
\newcommand{\pin}{\pi_{\mathtt{N}}}
\newcommand{\pidn}{\pi_{\mathtt{DN}}}
\newcommand{\normos}[1]{|#1|} 
\newcommand{\normosom}[1]{\normos{#1}_{\om}} 
\DeclareMathOperator{\diam}{diam}
\def\cn{\mathbb{C}}
\def\epsch{\epsh^{-1}}
\def\X{\mathsf{X}}
\def\Y{\mathsf{Y}}
\def\Z{\mathsf{Z}}
\def\H{\mathsf{H}}
\def\M{\mathrm{M}}
\def\L{\mathrm{L}}
\def\A{\mathrm{A}}
\def\As{\A^{*}}
\def\B{\mathrm{B}}
\def\Bs{\B^{*}}
\def\T{\mathrm{T}}
\def\cA{\mathcal{A}}
\def\cAs{\cA^{*}}
\def\cM{\mathcal{M}}
\def\gad{\ga_{\!\mathtt{t}}}
\def\gan{\ga_{\!\mathtt{n}}}
\def\xt{\tilde{x}}
\def\xh{\hat{x}}
\def\yt{\tilde{y}}
\def\zt{\tilde{z}}
\def\spec{\sigma}
\def\resol{\rho}
\def\specp{\sigma_{\mathtt{p}}}
\def\specc{\sigma_{\mathtt{c}}}
\def\specr{\sigma_{\mathtt{r}}}
\renewcommand{\norm}[1]{\normos{#1}}
\newcommand{\normx}[1]{\norm{#1}_{\X}}
\newcommand{\normy}[1]{\norm{#1}_{\Y}}
\newcommand{\normz}[1]{\norm{#1}_{\Z}}
\newcommand{\normltom}[1]{\norm{#1}_{\ltom}}
\newcommand{\normltepsom}[1]{\norm{#1}_{\ltepsom}}
\newcommand{\normltepsmoom}[1]{\norm{#1}_{\ltepsmoom}}
\newcommand{\normltmuom}[1]{\norm{#1}_{\ltmuom}}
\newcommand{\normltmumoom}[1]{\norm{#1}_{\ltmumoom}}
\renewcommand{\scp}[2]{\scps{#1}{#2}}
\newcommand{\scpx}[2]{\scp{#1}{#2}_{\X}}
\newcommand{\scpy}[2]{\scp{#1}{#2}_{\Y}}
\newcommand{\scpltom}[2]{\scp{#1}{#2}_{\ltom}}
\newcommand{\scpltepsom}[2]{\scp{#1}{#2}_{\ltepsom}}
\newcommand{\scpltqom}[2]{\scp{#1}{#2}_{\ltqom}}
\newcommand{\scpltqpoom}[2]{\scp{#1}{#2}_{\ltqpoom}}
\renewcommand{\Cgen}[2]{\mathsf{C}^{#1}_{#2}}
\def\cirt{\Cgen{\infty}{}(\rt)}
\def\cigadom{\Cgen{\infty}{\gad}(\om)}
\def\ltom{\Ltom}
\def\ltepsom{\Ltepsom}
\def\ltepsmoom{\Ltepsmoom}
\def\ltmuom{\Ltmuom}
\def\ltmumoom{\Ltmumoom}
\def\hogaom{\Hgen{1}{\ga}{}(\om)}
\def\hogadom{\Hgen{1}{\gad}{}(\om)}
\def\hoemptyom{\Hgen{1}{\emptyset}{}(\om)}
\def\dgaom{\divgen{\ga}{}{}(\om)}
\def\dgadom{\divgen{\gad}{}{}(\om)}
\def\dganom{\divgen{\gan}{}{}(\om)}
\def\dgazom{\divgen{\ga,0}{}{}(\om)}
\def\dgadzom{\divgen{\gad,0}{}{}(\om)}
\def\dganzom{\divgen{\gan,0}{}{}(\om)}
\def\rgaom{\rotgen{\ga}{}{}(\om)}
\def\rgadom{\rotgen{\gad}{}{}(\om)}
\def\rganom{\rotgen{\gan}{}{}(\om)}
\def\rgadzom{\rotgen{\gad,0}{}{}(\om)}
\def\rganzom{\rotgen{\gan,0}{}{}(\om)}
\def\ltqom{\Ltqom}
\def\ltqepsom{\Lgen{2,q}{\eps}(\om)}
\def\ltqepsmoom{\Lgen{2,q}{\epsmo}(\om)}
\def\ltqpoom{\Ltqpoom}
\def\ltqpomuom{\Lgen{2,q+1}{\mu}(\om)}
\def\ltqpomumoom{\Lgen{2,q+1}{\mumo}(\om)}
\def\ltqmoom{\Ltqmoom}
\def\dqom{\Dqom}
\def\dqgaom{\Dgen{q}{\ga}{}(\om)}
\def\dqgadom{\Dgen{q}{\gad}{}(\om)}
\def\dqgadzom{\Dgen{q}{\gad,0}{}(\om)}
\def\dqpogadzom{\Dgen{q+1}{\gad,0}{}(\om)}
\def\deqom{\Deqom}
\def\deqpoom{\Deqpoom}
\def\deqganom{\Degen{q}{\gan}{}(\om)}
\def\deqpoganom{\Degen{q+1}{\gan}{}(\om)}
\def\deqganzom{\Degen{q}{\gan,0}{}(\om)}
\def\deqpoganzom{\Degen{q+1}{\gan,0}{}(\om)}
\newcommand{\harmqdnom}{\harmom{q}{\mathtt{DN}}}
\newcommand{\harmqpodnom}{\harmom{q+1}{\mathtt{DN}}}
\DeclareMathOperator{\rotcalspace}{\mathcal{R}}
\newcommand{\rotcalgen}[2]{\overset{#2}{\rotcalspace}{}_{#1}}
\def\rcalgadom{\rotcalgen{\gad}{}{}(\om)}
\def\rcalganom{\rotcalgen{\gan}{}{}(\om)}
\def\ca{c_{\A}}
\def\cas{c_{\As}}
\def\cm{c_{\M}}
\def\cml{c_{\M,\lambda}}
\def\cpga{c_{\mathtt{p},\ga}}
\def\cpgad{c_{\mathtt{p},\gad}}
\def\cpempty{c_{\mathtt{p},\emptyset}}
\def\cpgaeps{c_{\mathtt{p},\ga,\eps}}
\def\cpgadid{c_{\mathtt{p},\gad,\id}}
\def\cpgadeps{c_{\mathtt{p},\gad,\eps}}
\def\cpemptyeps{c_{\mathtt{p},\emptyset,\eps}}
\def\cmga{c_{\mathtt{m},\ga}}
\def\cmempty{c_{\mathtt{m},\emptyset}}
\def\cmgad{c_{\mathtt{m},\gad}}
\def\cmgaeps{c_{\mathtt{m},\ga,\eps}}
\def\cmemptyeps{c_{\mathtt{m},\emptyset,\eps}}
\def\cmgadid{c_{\mathtt{m},\gad,\id}}
\def\cmgadeps{c_{\mathtt{m},\gad,\eps}}
\def\cmgadiveps{c_{\mathtt{m},\ga,\div,\eps}}
\def\cmgandiveps{c_{\mathtt{m},\gan,\div,\eps}}
\def\cmemptydiveps{c_{\mathtt{m},\emptyset,\div,\eps}}
\def\cmgadrotepsmu{c_{\mathtt{m},\gad,\rot,\eps,\mu}}
\def\cmganrotmueps{c_{\mathtt{m},\gan,\rot,\mu,\eps}}
\def\cmgadrotepseps{c_{\mathtt{m},\gad,\rot,\eps,\eps}}
\def\cmganrotepseps{c_{\mathtt{m},\gan,\rot,\eps,\eps}}
\def\cmgarotepsid{c_{\mathtt{m},\ga,\rot,\eps,\id}}
\def\cmgadrotepsid{c_{\mathtt{m},\gad,\rot,\eps,\id}}
\def\cmganrotepsid{c_{\mathtt{m},\gan,\rot,\eps,\id}}
\def\cmemptyrotepsid{c_{\mathtt{m},\emptyset,\rot,\eps,\id}}
\def\cmgarotepsid{c_{\mathtt{m},\ga,\rot,\eps,\id}}
\def\cmgarotidid{c_{\mathtt{m},\ga,\rot,\id,\id}}
\def\cmgadrotidid{c_{\mathtt{m},\gad,\rot,\id,\id}}
\def\cmganrotidid{c_{\mathtt{m},\gan,\rot,\id,\id}}
\def\cmganrotideps{c_{\mathtt{m},\gan,\rot,\id,\eps}}
\def\cmemptyrotepsid{c_{\mathtt{m},\emptyset,\rot,\eps,\id}}
\def\cmemptyrotidid{c_{\mathtt{m},\emptyset,\rot,\id,\id}}
\def\cmgadrotepsmu{c_{\mathtt{m},\gad,\rot,\eps,\mu}}
\def\cmgadrot{c_{\mathtt{m},\gad,\rot}}
\def\cmganrot{c_{\mathtt{m},\gan,\rot}}
\def\cmgarot{c_{\mathtt{m},\ga,\rot}}
\def\cmemptyrot{c_{\mathtt{m},\emptyset,\rot}}
\def\cmroteps{c_{\mathtt{m},\rot,\eps}}
\def\cmgaroteps{c_{\mathtt{m},\ga,\rot,\eps}}
\def\cmemptyroteps{c_{\mathtt{m},\emptyset,\rot,\eps}}
\def\cmgadroteps{c_{\mathtt{m},\gad,\rot,\eps}}
\def\cmganroteps{c_{\mathtt{m},\gan,\rot,\eps}}
\begin{document}

\maketitle{}

\begin{center}
{\tt Dedicated to
Martin Costabel\\
on the occasion of his 65th birthday}
\end{center}

\begin{abstract}
Using tools from functional analysis
we show that for bounded and convex domains in three dimensions, 
the Maxwell constants are bounded from below and above 
by Friedrichs' and Poincar\'e's constants.\\
\keywords{Maxwell inequality, Poincar\'e inequality, Friedrichs inequality,
Maxwell's equations, Maxwell constant, second Maxwell eigenvalue, 
electro statics, magneto statics}
\end{abstract}

\tableofcontents

\section{Introduction and Preliminaries}

Throughout this paper, let us fix a bounded domain $\om\subset\rt$
with boundary $\ga:=\pom$, which is devided into 
two relatively open subsets $\gad$ and its complement $\gan:=\ga\setminus\ol{\gad}$. 
The letters \texttt{t} and \texttt{n} should remind on
homogeneous tangential and normal boundary conditions.

It is well known that the Poincar\'e (or Friedrichs) inequality, i.e.,
for all $u\in\hogadom$
\begin{align}
\mylabel{introestgrad}
\normltom{u}
\leq\cpgadeps\normltepsom{\na u},
\end{align}
holds with some $\cpgadeps>0$, as long as Rellich's selection theorem is valid, i.e., the embedding 
\begin{align}
\mylabel{introgradcomp}
\hoom\hookrightarrow\ltom
\end{align}
is compact. Here, $\ltom$ and $\hoom$ denote the usual Lebesgue- and Sobolev (Hilbert) spaces,
respectively. Moreover, $\eps:\om\to\rttt$ denotes a symmetric and uniformly positive definite
$\Li$-matrix field. We introduce $\ltepsom$ as $\ltom$ equipped with the weighted inner product 
$\scpltepsom{\,\cdot\,}{\,\cdot\,}:=\scpltom{\eps\,\cdot\,}{\,\cdot\,}$.\footnote{Throughout
this paper norms resp.~scalar products will be denoted 
by $\normx{\,\cdot\,}$ resp.~$\scpx{\,\cdot\,}{\,\cdot\,}$
if $\X$ is a normed space or a space featuring a scalar product.}
For $\gad\neq\emptyset$ the Sobolev space 
$\hogadom$ is defined as the closure \big(taken in $\hoom$\big) of test functions
$$\cigadom:=\set{\varphi|_{\om}}{\varphi\in\cirt\,,\,\dist(\supp\varphi,\gad)>0}.$$
Otherwise we set $\hoemptyom:=\hoom\cap\rz^{\bot}$.
Let us assume that we have chosen the best constant in \eqref{introestgrad}, this is
$$\frac{1}{\cpgadeps}:=\inf_{0\neq u\in\hogadom}\frac{\normltepsom{\na u}}{\normltom{u}}.$$

Analogously, it is also well known that the (let's call it) Maxwell inequality, i.e.,
for all $E\in\rgadom\cap\epsmo\dganom$
\begin{align}
\nonumber
\normltepsom{E-\pidn E}
&\leq\cmgadeps\big(\normltom{\div\eps E}^2+\normltom{\rot E}^2\big)^{\nicefrac{1}{2}}
\intertext{or equivalently for all $E\in\rgadom\cap\epsmo\dganom\cap\harmdnepsom^{\bot_{\eps}}$}
\mylabel{intro-estrotdiv}
\normltepsom{E}
&\leq\cmgadeps\big(\normltom{\div\eps E}^2+\normltom{\rot E}^2\big)^{\nicefrac{1}{2}},
\end{align}
holds with some $\cmgadeps>0$, as long as the Maxwell selection theorem or
the Maxwell compactness property is given, i.e., the embedding 
\begin{align}
\mylabel{introrotdivcomp}
\rgadom\cap\epsmo\dganom\hookrightarrow\ltom
\end{align}
is compact, see Appendix \ref{appmaxest} for details. 
Here, we introduce the Sobolev (Hilbert) spaces
$$\rom:=\set{E\in\ltom}{\rot E\in\ltom},\quad
\dom:=\set{E\in\ltom}{\div E\in\ltom}$$
in the distributional sense. As above, if $\gad\neq\emptyset$,
we define as closures \big(taken in $\rom$ resp.~$\dom$\big) of test vector fields $\cigadom$
the Sobolev spaces $\rgadom$ and $\dgadom$ (and of course the same for $\gan$).
If $\gad=\emptyset$ we set $\rotgen{\emptyset}{}(\om):=\rom$ and $\divgen{\emptyset}{}(\om):=\dom$.
Then, for $\gad\neq\emptyset$
in $\hogadom$, $\rgadom$ and $\dgadom$ homogeneous scalar, tangential and normal traces
at $\gad$ are generalized, respectively. Moreover, we define the closed subspaces
$$\rzom:=\set{E\in\ltom}{\rot E=0},\quad
\dzom:=\set{E\in\ltom}{\div E=0}$$
as well as $\rgadzom:=\rgadom\cap\rzom$ and $\dgadzom:=\dgadom\cap\dzom$.
Finally, we have the harmonic Dirichlet-Neumann fields
$$\harmdnepsom:=\rgadzom\cap\epsmo\dganzom,$$
which are finite dimensional since by \eqref{introrotdivcomp} 
the unit ball is compact in $\harmdnepsom$.
The $\ltepsom$-orthogonal projector onto them 
will be denoted by $\pidn:\ltepsom\to\harmdnepsom$
and $\bot_{\eps}$ means orthogonality in $\ltepsom$.
If $\gad=\ga$ resp.~$\gan=\ga$ we have the classical Dirichlet resp.~Neumann fields 
and write $\harmdepsom$ resp.~$\harmnepsom$.
We also need the Neumann-Dirichlet fields 
$\harmndepsom:=\rganzom\cap\epsmo\dgadzom$.
In the case $\eps=\id$ we usually omit $\eps$ in our notations.
Again, we assume that also in \eqref{intro-estrotdiv} the best constant
$$\frac{1}{\cmgadeps}:=\inf_{0\neq E\in\rgadom\cap\epsmo\dganom\cap\harmdnepsom^{\bot_{\eps}}}
\frac{\big(\normltom{\div\eps E}^2+\normltom{\rot E}^2\big)^{\nicefrac{1}{2}}}{\normltepsom{E}}$$
is taken.

The crucial property for \eqref{intro-estrotdiv} to hold
is the Maxwell compactness property \eqref{introrotdivcomp},
which holds, e.g., if $\om$ has a (strongly) Lipschitz continuous boundary $\ga$
with a (strongly) Lipschitz continuous interface $\gamma:=\ol{\gad}\cap\ol{\gan}$,
see \cite{jochmanncompembmaxmixbc} for details.
More precisely, the boundary $\ga$ and the interface $\gamma$ 
can be described locally as graphs of Lipschitz functions.
From now on we assume this properties of $\ga$ and $\gad$, $\gan$
as general assumption. Note that then also \eqref{introgradcomp}
and \eqref{introestgrad} hold.
Another successful approach proving the Maxwell compactness property 
using a different technique from \cite{weckmax} has been shown in \cite{kuhndiss}.
For the Maxwell compactness property in the case of full boundary conditions we refer to 
\cite{weckmax,picardpotential,picardboundaryelectro,picardcomimb,webercompmax,leisbook,costabelremmaxlip,picardweckwitschxmas,saranenmaxkegel,saranenmaxnichtglatt,saranenineqfried,witschremmax}.

With the help of the $\ltepsom$-orthogonal Helmholtz decomposition
\begin{align}
\mylabel{introhelmdeco}
\ltepsom
&=\na\hogadom\oplus_{\eps}\harmdnepsom\oplus_{\eps}\epsmo\rot\rganom,
\intertext{where}
\nonumber
\rgadzom
&=\na\hogadom\oplus_{\eps}\harmdnepsom,\quad
\epsmo\dganzom
=\epsmo\rot\rganom\oplus_{\eps}\harmdnepsom,
\end{align}
see Appendix \ref{appmaxhelm} for details, 
we can split the estimate \eqref{intro-estrotdiv} into two, namely
\begin{align}
\mylabel{}
\forall\,E&\in\epsmo\dganom\cap\na\hogadom&
\normltepsom{E}&\leq\cmgandiveps\normltom{\div\eps E},\\
\forall\,E&\in\rgadom\cap\epsmo\rot\rganom&
\normltepsom{E}&\leq\cmgadrotepsid\normltom{\rot E},
\end{align}
where we again assume to use the best constants
\begin{align*}
\frac{1}{\cmgandiveps}&:=\inf_{0\neq E\in\epsmo\dganom\cap\na\hogadom}
\frac{\normltom{\div\eps E}}{\normltepsom{E}},\\
\frac{1}{\cmgadrotepsid}&:=\inf_{0\neq E\in\rgadom\cap\epsmo\rot\rganom}
\frac{\normltom{\rot E}}{\normltepsom{E}}.
\end{align*}

By the assumptions on $\eps$ there exist $\epsu,\epso>0$
such that for all $E\in\ltom$
\begin{align*}
\frac{1}{\epsu}\normltom{E}
&\leq\normltepsom{E}
\leq\epso\normltom{E}.
\intertext{We note $\normltepsom{E}=\normltom{\eps^{1/2}E}$
and $\normltepsom{\eps^{1/2}E}=\normltom{\eps E}$. Thus, for all $E\in\ltom$}
\frac{1}{\epsu}\normltepsom{E}
&\leq\normltom{\eps E}
\leq\epso\normltepsom{E}.
\end{align*}
The inverse $\epsmo$ satisfies for all $E\in\ltom$
$$\frac{1}{\epso}\normltom{E}
\leq\normltepsmoom{E}
\leq\epsu\normltom{E},\quad
\frac{1}{\epso}\normltepsmoom{E}
\leq\normltom{\epsmo E}
\leq\epsu\normltepsmoom{E},$$
which immediately follows by
$$\normltepsmoom{E}=\normltom{\eps^{-1/2}E}
\begin{cases}
\leq\epsu\normltepsom{\eps^{-1/2}E}
=\epsu\normosom{E}\\
\geq\epso^{-1}\normltepsom{\eps^{-1/2}E}
=\epso^{-1}\normosom{E}
\end{cases}.$$
For later purposes let us also define $\epsh:=\max\{\epsu,\epso\}$.

In this contribution we will study these different constants
$\cpgadeps$, $\cmgadeps$, $\cmgandiveps$, $\cmgadrotepsid$
and their relations to each other.
It turns out that 
$$\cpgadeps=\cmgandiveps,\quad
\cmgadrotepsid=\cmganrotideps,\quad
\cmgadeps=\max\{\cpgadeps,\cmgadrotepsid\}$$ 
hold, see Lemmas \ref{estgraddivcpdiv}, \ref{maintheononconvexlem} and \ref{estrotrotcmax}.
The main result of this paper states that in the special case 
of full boundary conditions, i.e., $\gad=\ga$ or $\gan=\ga$,
and for bounded and \emph{convex} domains we have
$$\frac{\cpga}{\epso}
\leq\cmgaeps
\leq\epsh\cp,\quad
\frac{\cp}{\epso}
\leq\cmemptyeps
\leq\epsh\cp$$
and especially for $\eps=\id$
$$\max\{\cpga,\cmrot\}=\cmga\leq\cmempty=\cp,$$
see Theorem \ref{maintheoconvexfullboundary}.
Here, we introduce for the special case $\eps=\id$
$$\cpgad:=\cpgadid,\quad
\cp:=\cpempty,\quad
\cmgad:=\cmgadid$$
and
$$\cmgadrot:=\cmgadrotidid=\cmganrotidid=\cmganrot$$
as well as
$$\cmrot:=\cmgarotidid=\cmemptyrotidid.$$

The crucial point in our analysis is that for convex domains
$$\cmrot\leq\cp,\quad
\cmgarotepsid,\cmemptyrotepsid\leq\epso\cp$$
hold, see Lemma \ref{mainlem}.
Some of these results have also been obtained recently in \cite{paulymaxconst1}
utilizing different and more elementary\footnote{In the sense that
no tools from functional analysis were used.} methods.
We note that in the convex case we can estimate 
the Poincar\'e constant $\cp$ by the diameter of $\om$.
More precisely, by the famous paper of Payne and Weinberger 
\cite{payneweinbergerpoincareconvex}\footnote{A little mistake 
or inconsistency in \cite{payneweinbergerpoincareconvex} 
has been corrected later in \cite{bebendorfpoincareconvex}.} we have 
$$\cp\leq\frac{\diam(\om)}{\pi}.$$
In \cite{payneweinbergerpoincareconvex} also the optimality of this estimate has been shown.
Furthermore, $\cpga<\cp$ is well known even for non-convex domains, 
see e.g.~\cite{filonovdirneulapeigen} and the cited literature, yielding
\begin{align}
\mylabel{friedrichspoincareconstest}
\frac{1}{\sqrt{\lambda_{1}}}
=\cpga
<\cp
=\frac{1}{\sqrt{\mu_{2}}}
\leq\frac{\diam(\om)}{\pi},
\end{align}
where $\lambda_{1}$ resp.~$\mu_{2}$
is the first Dirichlet resp.~second Neumann eigenvalue of the negative Laplacian.

At least some of our results extend in a natural way to bounded domains 
$\om\subset\rN$ or even to Riemannian manifolds with compact closure,
see Remark \ref{estgraddivrN} and Appendix \ref{appgenop}.

Our new estimates have important applications e.g.~to numerical analysis,
where especially an upper bound for the Maxwell constants 
is needed e.g.~for preconditioning and for functional a posteriori error estimates
in the framework of Maxwell's equations.

\section{An Abstract Setting}
\mylabel{fa}

Let $\X$ and $\Y$ be Hilbert spaces and 
$$\A:D(\A)\subset\X\to\Y,\quad
\As:D(\As)\subset\Y\to\X$$ 
be a closed and densely defined linear operator and its adjoint.
Here, $D$ denotes the domain of definition
and we introduce the kernel $N$ and the range $R$.
Since $\A$ is closed we have $(\As)^{*}=\bar{\A}=\A$ 
and sometimes $(\A,\As)$ is called a dual pair.
The projection theorem yields the orthogonal `Helmholtz' decompositions 
\begin{align}
\mylabel{genortho}
\X=N(\A)\oplus\ol{R(\As)},\quad
\Y=N(\As)\oplus\ol{R(\A)}.
\end{align}
Now, we collect some well known facts. 
For the convenience of the reader we give simple proofs of those 
in the Appendix \ref{appfa}.

$\As\A$ and $\A\As$ are non-negative and self-adjoint and their spectra coincide 
if we exclude $\{0\}$, i.e.,
\begin{align}
\mylabel{spectrumAAs}
\sigma(\As\A)\setminus\{0\}
=\sigma(\A\As)\setminus\{0\},\quad
\sigmap(\As\A)\setminus\{0\}
=\sigmap(\A\As)\setminus\{0\}.
\end{align}
Let us assume that the embedding 
\begin{align}
\mylabel{compactembA}
D(\A)\cap\ol{R(\As)}\hookrightarrow\X
\end{align}
is compact.

\begin{lem}
\mylabel{falemone}
There exist $\ca,\cas>0$, such that
\begin{align*}
\forall\,x&\in D(\A)\cap R(\As)&\normx{x}&\leq\ca\normy{\A x},\\
\forall\,y&\in D(\As)\cap R(\A)&\normy{y}&\leq\cas\normx{\As y}.
\end{align*}
Moreover, $R(\A)$ and $R(\As)$ are closed and
$$\X=N(\A)\oplus R(\As),\quad
\Y=N(\As)\oplus R(\A).$$
Furthermore, $D(\As)\cap R(\A)\hookrightarrow\Y$ is compact as well.
\end{lem}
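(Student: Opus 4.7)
\textbf{Proof plan for Lemma \ref{falemone}.}

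The plan is to first establish the Poincaré-type estimate for $\A$ on $D(\A)\cap\ol{R(\As)}$ by the standard contradiction argument. Assume no $\ca$ works, pick $x_n\in D(\A)\cap\ol{R(\As)}$ with $\normx{x_n}=1$ and $\normy{\A x_n}\to 0$. Then $(x_n)$ is bounded in the graph norm of $\A$, so by the compactness assumption \eqref{compactembA} a subsequence converges in $\X$ to some $x$. Since $\A$ is closed and $\A x_n\to 0$, we get $x\in N(\A)$, while closedness of $\ol{R(\As)}$ yields $x\in\ol{R(\As)}$. By the orthogonal decomposition \eqref{genortho} this forces $x=0$, contradicting $\normx{x}=1$. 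The inequality on $D(\A)\cap R(\As)$ is an immediate restriction.

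Next I would deduce that $R(\A)$ is closed: if $\A x_n\to y$ in $\Y$, replacing $x_n$ by its $\X$-projection onto $\ol{R(\As)}$ (which leaves $\A x_n$ unchanged because $\A$ vanishes on $N(\A)$), the Poincaré estimate makes $(x_n)$ Cauchy, hence convergent to some $x$, and closedness of $\A$ gives $\A x=y\in R(\A)$. Consequently \eqref{genortho} becomes $\Y=N(\As)\oplus R(\A)$. To get that $R(\As)$ is closed, I would use the Riesz construction: for $x\in\ol{R(\As)}$, define on the (now closed) space $R(\A)$ the functional $T(\A z):=\scpx{z}{x}$, which is well defined because $N(\A)\perp\ol{R(\As)}$, and bounded because $|T(\A z)|\le\normx{z}\normx{x}\le\ca\normy{\A z}\normx{x}$ after projecting $z$ onto $\ol{R(\As)}$. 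The Riesz representer $y\in R(\A)$ then satisfies $\scpy{\A z}{y}=\scpx{z}{x}$ for all $z\in D(\A)$, so $y\in D(\As)$ and $\As y=x$. Hence $\ol{R(\As)}=R(\As)$ and $\X=N(\A)\oplus R(\As)$ follows from \eqref{genortho}.

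For the second Poincaré-type estimate I would use a duality trick avoiding any repetition of the contradiction argument. Given $y\in D(\As)\cap R(\A)$, write $y=\A x$ with the unique $x\in D(\A)\cap\ol{R(\As)}$. Then
\begin{align*}
\normy{y}^{2}=\scpy{y}{\A x}=\scpx{\As y}{x}\le\normx{\As y}\normx{x}\le\ca\normx{\As y}\normy{y},
\end{align*}
so $\cas\le\ca$ works (and by symmetry actually $\cas=\ca$).

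Finally, the compactness of $D(\As)\cap R(\A)\hookrightarrow\Y$ is the step I expect to require the most care, but it follows by the same representation. Let $(y_n)$ be bounded in the graph norm of $\As$; write $y_n=\A x_n$ with $x_n\in D(\A)\cap\ol{R(\As)}$. The first Poincaré estimate bounds $\normx{x_n}\le\ca\normy{y_n}$, so $(x_n)$ is bounded in the graph norm of $\A$ and by \eqref{compactembA} a subsequence converges in $\X$. The identity
\begin{align*}
\normy{y_n-y_m}^{2}=\scpy{y_n-y_m}{\A(x_n-x_m)}=\scpx{\As(y_n-y_m)}{x_n-x_m}
\end{align*}
combined with Cauchy-Schwarz and the boundedness of $\As y_n$ then shows $(y_n)$ is Cauchy in $\Y$, yielding the claimed compact embedding. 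The main obstacle is this last point, where one has to transfer compactness from the $\A$-side to the $\As$-side without assuming the analogue of \eqref{compactembA}; the duality identity above is what makes this work.
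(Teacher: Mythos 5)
Your proposal is correct. The first estimate (contradiction via \eqref{compactembA}), the closedness of $R(\A)$, and the final compactness of $D(\As)\cap R(\A)\hookrightarrow\Y$ (writing $y_{n}=\A x_{n}$, bounding $\normx{x_{n}}$ by the first estimate, extracting a convergent subsequence in $\X$, and closing with $\normy{y_{n}-y_{m}}^{2}=\scpx{x_{n}-x_{m}}{\As(y_{n}-y_{m})}$) are essentially identical to the paper's Lemma \ref{faAAs}. You deviate in two places, both legitimately. First, for the closedness of $R(\As)$ the paper simply cites the closed range theorem, whereas you prove it directly: given $x\in\ol{R(\As)}$ you build the bounded functional $T(\A z)=\scpx{z}{x}$ on $R(\A)$ and read off $\As y=x$ from its Riesz representer. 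This is self-contained and in effect reproves the relevant half of the closed range theorem; just make sure to state explicitly that the first estimate is available on all of $D(\A)\cap\ol{R(\As)}$ (not merely $D(\A)\cap R(\As)$), which your contradiction argument does deliver since \eqref{compactembA} concerns that closure. Second, for the estimate $\normy{y}\leq\cas\normx{\As y}$ the paper first establishes the compact embedding on the $\As$-side and then repeats the contradiction argument (with Schauder's theorem as an alternative), whereas your duality computation $\normy{y}^{2}=\scpx{\As y}{x}\leq\ca\normx{\As y}\normy{y}$ is shorter, needs no further compactness, and yields the quantitative bonus $\cas\leq\ca$ (by symmetry $\cas=\ca$), anticipating Theorem \ref{fatheothree} without any spectral theory. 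Both of your substitutions are sound; the paper's route buys slightly less bookkeeping by outsourcing to standard theorems, yours buys self-containedness and the explicit constant comparison.
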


We note that the same lemma can be proved assuming the compactness of the embedding
of $D(\As)\cap\ol{R(\A)}\hookrightarrow\Y$ instead of \eqref{compactembA}.
By Lemma \ref{falemone} the restricted operator
$$\cA:=\A|_{D(\cA)}:D(\cA)\subset R(\As)\to R(\A),\quad
D(\cA):=D(\A)\cap R(\As)$$
has a bounded inverse $\cA^{-1}:R(\A)\to D(\cA)$ with $\norm{\cA^{-1}}\leq(1+\ca^2)^{\nicefrac{1}{2}}$,
which is compact as an operator from $R(\A)$ to $R(\As)$.
Hence, $\As\A$ and $\A\As$ have pure point spectra which can only accumulate at infinity
and which coincide by \eqref{spectrumAAs}.
Especially, the second eigenvalues equal and therefore
(see Corollary \ref{eigenvaluesmin} for details) we conclude:

\begin{theo}
\mylabel{fatheothree}
For the best constants in Lemma \ref{falemone} it holds $\ca=\cas$, this is
$$\frac{1}{\ca}=\min_{0\neq x\in D(\A)\cap R(\As)}\frac{\normy{\A x}}{\normx{x}}
=\min_{0\neq y\in D(\As)\cap R(\A)}\frac{\normx{\As y}}{\normy{y}}=\frac{1}{\cas}.$$
\end{theo}

Hence, $\ca^{-2}=\cas^{-2}$ is the first positive eigenvalue 
of $\As\A$ as well as of $\A\As$.

\section{The Maxwell Estimates}

We remind on $\om$ and its properties from the introduction.

\subsection{General Lipschitz Domains}

In this subsection we frequently use Lemma \ref{falemone} and Theorem \ref{fatheothree}.

\subsubsection{Gradient and Divergence}

Let us consider $\A$ as 
$$\na:\hogadom\subset\ltom\to\ltepsom.$$
Then $\As$ equals 
$$-\div\eps:\epsmo\dganom\subset\ltepsom\to\ltom.$$
More precisely, we have the following table:
\begin{center}\begin{tabular}{|c||c|c|c||c|c|}
\hline
$\A$ & $D(\A)$ & $\X$ & $\Y$ & $N(\A)$ & $R(\A)$ \\
\hline
$\na$ & $\hogadom$ & $\ltom$ & $\ltepsom$ & $\{0\}$ & $\na\hogadom=\rgadzom\cap\harmdnom^{\bot}$\\
\hline\hline
$\As$ & $D(\As)$ & $\Y$ & $\X$ & $N(\As)$ & $R(\As)$\\
\hline
$-\div\eps$ & $\epsmo\dganom$ & $\ltepsom$ & $\ltom$ & $\epsmo\dganzom$ & $\div\dganom$\\
\hline
\end{tabular}\end{center}
We note that $\div\dganom=\ltom$ if $\gan\neq\ga$ and $\div\dgaom=\ltom\cap\rz^{\bot}$.
Moreover, we emphasize that
indeed $D(\As)=\epsmo\dganom$ holds, 
see e.g.~\cite{jochmanncompembmaxmixbc}.
Note that for this one has to show the approximation property
$$\dganom=\set{H\in\dom}{\scpltom{\div H}{u}=-\scpltom{H}{\na u}\,\forall\;u\in\hogadom},$$
which is not trivial at all for mixed boundary conditions.
Only in the special cases of full boundary conditions this is clear.
$D(\As)=\epsmo\dom$ holds for $\gad=\ga$ by definition.
For $\gad=\emptyset$ we see that the closed operator 
$$\B:=-\div:\dgaom\subset\ltom\to\ltom$$
has the adjoint
$$\Bs=\na:\hoom\subset\ltom\to\ltom$$
by definition. Since in this case $\A=\Bs$ we have
$D(\As)=D(\B^{**})=D(\B)=\dgaom$.
The crucial compact embedding \eqref{compactembA} reads
$$\hogadom\cap\ol{\div\dganom}\hookrightarrow\ltom$$
and is just Rellich's selection theorem since
$$\hogadom\cap\ol{\div\dganom}\subset\hogadom\subset\hoom\hookrightarrow\ltom.$$
Theorem \ref{fatheothree} yields
$$0<\frac{1}{\cpgadeps}
=\min_{0\neq u\in\hogadom}\frac{\normltepsom{\na u}}{\normltom{u}}
=\min_{0\neq E\in\epsmo\dganom\cap\na\hogadom}\frac{\normltom{\div\eps E}}{\normltepsom{E}}
=\frac{1}{\cmgandiveps}.$$
We note that $\lambda_{\gad,\eps}:=\cpgadeps^{-2}$ is the first positive
Dirichlet-Neumann eigenvalue of the weighted negative Laplacian $-\Delta_{\eps}:=-\div\eps\na$.
For $\eps=\id$ and $\gad=\ga$ resp.~$\gad=\emptyset$ we see that
$\lambda_{\ga,\id}=:\lambda_{1}$ resp.~$\lambda_{\emptyset,\id}=:\mu_{2}$
is the first Dirichlet resp.~second Neumann eigenvalue of the negative Laplacian.
As $\lambda_{\gad,\eps}=\cmgandiveps^{-2}$ holds too, 
$\lambda_{\gad,\eps}$ is also the first positive Neumann-Dirichlet eigenvalue 
of the weighted negative reduced grad-div-operator $-\na\div\eps$,
which can also be interpreted as the weighted negative vector Laplacian
$-\vec{\Delta}_{\eps}:=-\na\div\eps+\rot\rot$ on a subspace of irrotational vector fields.

\begin{lem}
\mylabel{estgraddivcpdiv}
The Poincar\'e constant in $\hogadom$
and the Maxwell divergence constant in $\epsmo\dganom\cap\na\hogadom$, i.e.,
the best constants in the inequalities
\begin{align*}
\forall\,u&\in\hogadom&
\normltom{u}&\leq\cpgadeps\normltepsom{\na u},\\
\forall\,E&\in\epsmo\dganom\cap\na\hogadom&
\normltepsom{E}&\leq\cmgandiveps\normltom{\div\eps E},
\end{align*}
coincide and correspond to the first positive 
Dirichlet-Neumann eigenvalue of the weighted negative Laplacian $-\Delta_{\eps}$, 
more precisely $\cpgadeps=\cmgandiveps=1/\sqrt{\lambda_{\gad,\eps}}$.
\end{lem}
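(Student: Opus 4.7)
The plan is to apply the abstract Theorem \ref{fatheothree} to the dual pair
$$\A = \na : \hogadom \subset \ltom \to \ltepsom, \qquad \As = -\div\eps : \epsmo\dganom \subset \ltepsom \to \ltom$$
identified in the table preceding the lemma. In fact, the chain of equalities
$$\frac{1}{\cpgadeps}
= \min_{0\neq u\in\hogadom}\frac{\normltepsom{\na u}}{\normltom{u}}
= \min_{0\neq E\in\epsmo\dganom\cap\na\hogadom}\frac{\normltom{\div\eps E}}{\normltepsom{E}}
= \frac{1}{\cmgandiveps}$$
is already displayed in the paragraph just above the statement, so the lemma is essentially a repackaging of that discussion; the proof consists in verifying the abstract hypotheses and reading off the conclusion.

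First I would confirm the operator setup: $\A = \na$ is densely defined (from $\cigadom$) and closed on $\hogadom$, with trivial kernel $N(\A)=\{0\}$, where the case $\gad=\emptyset$ is handled by the definition $\hoemptyom = \hoom\cap\rz^{\bot}$. The identification $\As = -\div\eps$ with full domain $\epsmo\dganom$ rests on the nontrivial approximation property characterizing $\dganom$ via a weak normal-trace vanishing on $\gan$, cited from \cite{jochmanncompembmaxmixbc}. The compactness hypothesis \eqref{compactembA} reduces to $D(\A)\cap\ol{R(\As)} \subset \hogadom \hookrightarrow \ltom$ and is thus granted by Rellich's theorem \eqref{introgradcomp}.

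With these hypotheses verified, Lemma \ref{falemone} delivers closedness of the ranges together with the two best-constant inequalities on $D(\A) = \hogadom$ (the whole of $\hogadom$ lies in $\ol{R(\As)}$ by \eqref{genortho} since $N(\A)=\{0\}$) and on $D(\As)\cap R(\A) = \epsmo\dganom \cap \na\hogadom$. Theorem \ref{fatheothree} then identifies the two best constants $\ca = \cas$, which is precisely $\cpgadeps = \cmgandiveps$, and further identifies $\ca^{-2}$ with the first positive eigenvalue of $\As\A = -\div\eps\na = -\Delta_{\eps}$ subject to Dirichlet conditions on $\gad$ and Neumann conditions on $\gan$, i.e.\ with $\lambda_{\gad,\eps}$. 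The only genuine obstacle in executing this plan is the adjoint-domain identification in the mixed-boundary case; for full boundary conditions it is either immediate from the definition (when $\gad=\ga$) or follows by taking $\B = -\div$ with $\Bs = \na$ (when $\gad = \emptyset$), as noted in the preceding text.
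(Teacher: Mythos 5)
Your proposal is correct and follows exactly the route the paper takes: the lemma is indeed just the summary of the preceding discussion, namely verifying the dual-pair setup for $\na$ and $-\div\eps$ (with the adjoint-domain identification from \cite{jochmanncompembmaxmixbc} and the compactness reduced to Rellich) and then reading off $\cpgadeps=\cmgandiveps=\lambda_{\gad,\eps}^{-1/2}$ from Theorem \ref{fatheothree}. No gaps.
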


\begin{lem}
\mylabel{estgraddivcpdivepsest}
It holds $\epso^{-1}\cpgad\leq\cpgadeps\leq\epsu\cpgad$
as well as $\cpga\leq\cpgad$ and $\cpgaeps\leq\cpgadeps$.
\end{lem}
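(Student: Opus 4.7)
Both halves of the lemma are pure Rayleigh-quotient bookkeeping; no spectral theory or compactness is needed. The inequality $\epso^{-1}\cpgad\leq\cpgadeps\leq\epsu\cpgad$ is a direct consequence of the pointwise norm-equivalence between $\ltom$ and $\ltepsom$ recorded in the introduction, while the inequalities $\cpga\leq\cpgad$ and $\cpgaeps\leq\cpgadeps$ come from the monotonicity of the best constant under enlarging the admissible test-function space.

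For the first assertion I would begin from the variational characterization
$$\frac{1}{\cpgadeps}=\inf_{0\neq u\in\hogadom}\frac{\normltepsom{\na u}}{\normltom{u}}$$
and apply the two-sided estimate $\epsu^{-1}\normltom{F}\leq\normltepsom{F}\leq\epso\normltom{F}$ from the introduction with $F=\na u$. Dividing by $\normltom{u}$ and passing to the infimum over $u\in\hogadom$ gives
$$\frac{1}{\epsu\,\cpgad}\leq\frac{1}{\cpgadeps}\leq\frac{\epso}{\cpgad},$$
and inversion yields exactly $\epso^{-1}\cpgad\leq\cpgadeps\leq\epsu\cpgad$.

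For the second assertion the key observation is that $\gad\subset\ga$, so every smooth function whose support is at positive distance from $\ga$ is automatically at positive distance from $\gad$. Hence the set of test functions used to define $\hogaom$ is contained in the one used to define $\hogadom$, and taking closures in $\hoom$ yields $\hogaom\subset\hogadom$. The infimum defining $\cpgaeps^{-1}$ is then taken over a smaller set than the one defining $\cpgadeps^{-1}$, whence
$$\frac{1}{\cpgaeps}=\inf_{0\neq u\in\hogaom}\frac{\normltepsom{\na u}}{\normltom{u}}\geq\inf_{0\neq u\in\hogadom}\frac{\normltepsom{\na u}}{\normltom{u}}=\frac{1}{\cpgadeps},$$
that is, $\cpgaeps\leq\cpgadeps$; specializing to $\eps=\id$ recovers $\cpga\leq\cpgad$.

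There is no real obstacle here; the only subtle point is the borderline case $\gad=\emptyset$, in which $\hoemptyom=\hoom\cap\rz^{\bot}$ is defined by a mean-value rather than a trace condition, so the inclusion $\hogaom\subset\hoemptyom$ fails and the elementary argument would have to be replaced by a mean-subtraction trick together with the strict inequality $\cpga<\cp$ recalled in \eqref{friedrichspoincareconstest}. For the regime $\gad\neq\emptyset$ targeted by the lemma the simple inclusion argument is clean and self-contained.
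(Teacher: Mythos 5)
Your proof is correct and, for the two-sided comparison $\epso^{-1}\cpgad\leq\cpgadeps\leq\epsu\cpgad$, it is essentially the paper's own argument (the paper chains the inequalities $\normltom{u}\leq\cpgad\normltom{\na u}\leq\epsu\cpgad\normltepsom{\na u}$ and $\normltom{u}\leq\cpgadeps\normltepsom{\na u}\leq\epso\cpgadeps\normltom{\na u}$ rather than manipulating the Rayleigh quotient, but this is the same computation). The paper's proof is in fact silent on the monotonicity claims $\cpga\leq\cpgad$ and $\cpgaeps\leq\cpgadeps$; your inclusion argument $\cigaom$-type test functions being admissible for $\gad\subset\ga$, hence $\hogaom\subset\hogadom$ after closure, supplies the missing justification cleanly for $\gad\neq\emptyset$, and you correctly flag that the degenerate case $\gad=\emptyset$ (where $\hoemptyom$ is cut out by a mean-value condition, not a trace condition) needs the separate, classical fact $\cpga<\cp$ from \eqref{friedrichspoincareconstest} rather than an inclusion.
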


\begin{proof}
For $u\in\hogadom$ we have
\begin{align*}
\normltom{u}
&\leq\cpgad\normltom{\na u}
\leq\epsu\cpgad\normltepsom{\na u},\\
\normltom{u}
&\leq\cpgadeps\normltepsom{\na u}
\leq\epso\cpgadeps\normltom{\na u},
\end{align*}
which gives $\cpgadeps\leq\epsu\cpgad$ and $\cpgad\leq\epso\cpgadeps$.
\end{proof}

\begin{rem}
\mylabel{estgraddivrN}
The results of this section extend to bounded domains $\om\subset\rN$, $N\in\nz$,
having the proper regularity of the boundary.
\end{rem}

\subsubsection{Rotations}
\mylabel{secrotations}

Now, let $\A$ be
$$\mumo\rot:\rgadom\subset\ltepsom\to\ltmuom.$$
Then $\As$ is
$$\epsmo\rot:\rganom\subset\ltmuom\to\ltepsom,$$
where $\mu$ is another matrix field similar to $\eps$.
More precisely:
\begin{center}\begin{tabular}{|c||c|c|c||c|c|}
\hline
$\A$ & $D(\A)$ & $\X$ & $\Y$ & $N(\A)$ & $R(\A)$ \\
\hline
$\mumo\rot$ & $\rgadom$ & $\ltepsom$ & $\ltmuom$ & $\rgadzom$ & $\mumo\rot\rgadom$\\
\hline\hline
$\As$ & $D(\As)$ & $\Y$ & $\X$ & $N(\As)$ & $R(\As)$\\
\hline
$\epsmo\rot$ & $\rganom$ & $\ltmuom$ & $\ltepsom$ & $\rganzom$ & $\epsmo\rot\rganom$\\
\hline
\end{tabular}\end{center}
We note
$$R(\A)=\mumo\big(\dgadzom\cap\harmndom^{\bot}\big),\quad
R(\As)=\epsmo\big(\dganzom\cap\harmdnom^{\bot}\big)$$
and that indeed $D(\As)=\rganom$ holds, 
see again e.g.~\cite{jochmanncompembmaxmixbc}.
As before, for this one has to show the approximation property
$$\rganom=\set{H\in\rom}{\scpltom{\rot H}{E}=\scpltom{H}{\rot E}\,\forall\;E\in\rgadom},$$
which is not trivial at all for mixed boundary conditions.
Again, only in the special cases of full boundary conditions this is clear.
Since $D(\As)=\rom$ holds for $\gad=\ga$ by definition
we have also $D(\Bs)=D(\A^{**})=D(\A)=\rgaom$ for $\B=\As$,
which shows the result for $\gad=\emptyset$.
The crucial compact embedding \eqref{compactembA} reads
$$\rgadom\cap\epsmo\ol{\rot\rganom}\hookrightarrow\ltepsom$$
and is just the Maxwell compactness property \eqref{introrotdivcomp} since
$$\rgadom\cap\epsmo\ol{\rot\rganom}\subset\rgadom\cap\epsmo\dganzom
\subset\rgadom\cap\epsmo\dganom\hookrightarrow\ltom\subset\ltepsom.$$
By Theorem \ref{fatheothree} we have
\begin{align*}
0<\frac{1}{\cmgadrotepsmu}
&=\min_{0\neq E\in\rgadom\cap\epsmo\rot\rganom}\frac{\normltmuom{\mumo\rot E}}{\normltepsom{E}}\\
&=\min_{0\neq H\in\rganom\cap\mumo\rot\rgadom}\frac{\normltepsom{\epsmo\rot H}}{\normltmuom{H}}
=\frac{1}{\cmganrotmueps},
\end{align*}
which serves also as definition for the constants $\cmgadrotepsmu$ and $\cmganrotmueps$.
Therefore, $\kappa_{\gad,\eps,\mu}:=\cmgadrotepsmu^{-2}$ is the first positive 
Dirichlet-Neumann eigenvalue of the weighted reduced 
double-rot-operator $\square_{\eps,\mu}:=\epsmo\rot\mumo\rot$,
which can also be interpreted as the weighted negative vector Laplacian
$-\vec{\Delta}_{\eps,\mu}:=-\na\div\eps+\epsmo\rot\mumo\rot$ 
on a subspace of $\eps$-solenoidal vector fields.
Since $\kappa_{\gad,\eps,\mu}=\cmganrotmueps^{-2}$ holds as well, 
$\kappa_{\gad,\eps,\mu}$ is also the first positive
Neumann-Dirichlet eigenvalue of the weighted reduced 
double-rot-operator $\square_{\mu,\eps}=\mumo\rot\epsmo\rot$,
which can also be interpreted as the weighted negative vector Laplacian
on a subspace of $\mu$-solenoidal vector fields, i.e.,
$-\vec{\Delta}_{\mu,\eps}=-\na\div\mu+\mumo\rot\epsmo\rot$.

\begin{lem}
\mylabel{estrotrotcmax}
The tangential-normal and normal-tangential Maxwell rotation constants, i.e.,
the best constants in the inequalities
\begin{align*}
\forall\,E&\in\rgadom\cap\epsmo\rot\rganom&
\normltepsom{E}&\leq\cmgadrotepsmu\normltmumoom{\rot E},\\
\forall\,H&\in\rganom\cap\mumo\rot\rgadom&
\normltmuom{H}&\leq\cmganrotmueps\normltepsmoom{\rot H},
\end{align*}
coincide and correspond to the first positive
Dirichlet-Neumann eigenvalue of the weighted reduced double-rot-operator $\square_{\eps,\mu}$, 
more precisely $\cmgadrotepsmu=\cmganrotmueps=1/\sqrt{\kappa_{\gad,\eps,\mu}}$.
\end{lem}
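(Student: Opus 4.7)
The plan is to recognize Lemma \ref{estrotrotcmax} as a direct instantiation of the abstract machinery developed in Section \ref{fa}, applied to the dual pair $(\A,\As)=(\mumo\rot,\epsmo\rot)$ acting between $\ltepsom$ and $\ltmuom$ as displayed in the table. Concretely, once the hypotheses of Lemma \ref{falemone} are verified for this pair, Theorem \ref{fatheothree} will supply the equality of the two best constants, and the spectral identification of $\square_{\eps,\mu}$ comes from the general identification of $\ca^{-2}$ as the first positive eigenvalue of $\As\A$ noted right after Theorem \ref{fatheothree}.

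First, I would confirm that $\A=\mumo\rot:\rgadom\subset\ltepsom\to\ltmuom$ is densely defined and closed, and that its adjoint is indeed $\As=\epsmo\rot:\rganom\subset\ltmuom\to\ltepsom$. Density is immediate from $\cigadom\subset\rgadom$, and closedness follows since $\rgadom$ is defined as a graph-norm closure and $\mu,\eps$ are bounded and boundedly invertible. The only delicate point — and in my view the main obstacle — is the identification $D(\As)=\rganom$, which boils down to the approximation property
$$\rganom=\set{H\in\rom}{\scpltom{\rot H}{E}=\scpltom{H}{\rot E}\,\forall\;E\in\rgadom}.$$
In the full-boundary cases $\gad=\ga$ or $\gad=\emptyset$ this is either immediate from the definition or follows from the $\A=\Bs$, $\Bs=\A^{**}$ duality argument already used in Subsection 3.1.1 for gradient and divergence. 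In the mixed case it is nontrivial, and I would cite \cite{jochmanncompembmaxmixbc}.

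Second, I would verify the crucial compact embedding hypothesis \eqref{compactembA} in the form
$$\rgadom\cap\epsmo\ol{\rot\rganom}\hookrightarrow\ltepsom.$$
As already recorded in the discussion preceding the lemma, the inclusions
$$\rgadom\cap\epsmo\ol{\rot\rganom}\subset\rgadom\cap\epsmo\dganzom\subset\rgadom\cap\epsmo\dganom\hookrightarrow\ltom\subset\ltepsom$$
reduce this to the Maxwell compactness property \eqref{introrotdivcomp}, which holds under our standing Lipschitz hypothesis on $\ga$ and the interface. The topological equivalence between $\ltom$ and $\ltepsom$ (from the bounds on $\eps$) ensures compactness transfers correctly.

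Third, with these ingredients Lemma \ref{falemone} yields the existence of finite best constants $\ca,\cas>0$ for this pair — these are by definition $\cmgadrotepsmu$ and $\cmganrotmueps$ — and in addition ensures $R(\A)=\mumo\rot\rgadom$ and $R(\As)=\epsmo\rot\rganom$ are closed (so no closures are needed in the formulation). Theorem \ref{fatheothree} then gives $\cmgadrotepsmu=\cmganrotmueps$ and provides the variational (min rather than inf) characterization. Finally, since $\As\A=\epsmo\rot\mumo\rot=\square_{\eps,\mu}$, the remark after Theorem \ref{fatheothree} identifies $\cmgadrotepsmu^{-2}=\kappa_{\gad,\eps,\mu}$ as the first positive eigenvalue of $\square_{\eps,\mu}$; by \eqref{spectrumAAs} the same number is the first positive eigenvalue of $\A\As=\square_{\mu,\eps}$, completing the Neumann–Dirichlet side of the identification.
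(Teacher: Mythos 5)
Your proposal is correct and follows essentially the same route as the paper: the lemma is obtained by instantiating the abstract dual-pair machinery (Lemma \ref{falemone}, Theorem \ref{fatheothree}) with $\A=\mumo\rot$, $\As=\epsmo\rot$, verifying $D(\As)=\rganom$ via the approximation property from \cite{jochmanncompembmaxmixbc} and reducing the compact embedding \eqref{compactembA} to the Maxwell compactness property through the same chain of inclusions. The identification of $\cmgadrotepsmu^{-2}$ with the first positive eigenvalue of $\square_{\eps,\mu}=\As\A$ and, via \eqref{spectrumAAs}, of $\square_{\mu,\eps}=\A\As$ is exactly how the paper concludes.
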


Let us define for $\eps=\mu$ and for $\eps=\mu=\id$
$$\cmgadroteps:=\cmgadrotepseps=\cmganrotepseps$$
and note
\begin{align}
\mylabel{rotconsteq}
\cmgadroteps=\cmganroteps,\quad
\cmgadrot=\cmganrot.
\end{align}

\begin{cor}
\mylabel{estrotrotcmaxcor}
For all $E\in\big(\rgadom\cap\epsmo\rot\rganom\big)\cup\big(\rganom\cap\epsmo\rot\rgadom\big)$
\begin{align}
\mylabel{estrotrotcmaxcorineqone}
\normltepsom{E}
\leq\cmgadroteps\normltepsmoom{\rot E}
\leq\epsu\cmgadroteps\normltom{\rot E}
\end{align}
holds with sharp constants.
Moreover, the inequalities 
\begin{align}
\mylabel{estrotrotcmaxcorineqtwo}
\forall\,E&\in\rgadom\cap\epsmo\rot\rganom&
\normltepsom{E}&\leq\cmgadrotepsid\normltom{\rot E},\\
\mylabel{estrotrotcmaxcorineqthree}
\forall\,H&\in\rganom\cap\epsmo\rot\rgadom&
\normltepsom{H}&\leq\cmganrotepsid\normltom{\rot H}
\end{align}
hold, where these sharp constants do not need to coincide if $\eps\neq\id$.
\end{cor}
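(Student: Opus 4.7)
The plan is to obtain all three inequalities by specializing Lemma \ref{estrotrotcmax} to different choices of the weight $\mu$ and then appending a trivial weight comparison for the final step in \eqref{estrotrotcmaxcorineqone}. There is essentially nothing to prove beyond substitution and renaming of symbols; the real content is already in Lemma \ref{estrotrotcmax}.

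First I would handle \eqref{estrotrotcmaxcorineqtwo} and \eqref{estrotrotcmaxcorineqthree}, which each arise from Lemma \ref{estrotrotcmax} by setting one of the two weights equal to the identity. For \eqref{estrotrotcmaxcorineqtwo} I take $\mu=\id$ in the tangential-normal half of Lemma \ref{estrotrotcmax}: then $\normltmumoom{\rot E}$ collapses to $\normltom{\rot E}$, the space $\epsmo\rot\rganom$ is unchanged, and the constant $\cmgadrotepsmu$ becomes $\cmgadrotepsid$. For \eqref{estrotrotcmaxcorineqthree} I would apply the normal-tangential half of Lemma \ref{estrotrotcmax} with the roles of the lemma's two weights exchanged, i.e.\ formally replacing the lemma's $\eps$ by $\id$ and its $\mu$ by $\eps$; then $\rganom\cap\mumo\rot\rgadom$ becomes $\rganom\cap\epsmo\rot\rgadom$, the norm $\normltmuom{H}$ becomes $\normltepsom{H}$, $\normltepsmoom{\rot H}$ becomes $\normltom{\rot H}$, and the constant $\cmganrotmueps$ becomes $\cmganrotepsid$, yielding \eqref{estrotrotcmaxcorineqthree} exactly.

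For \eqref{estrotrotcmaxcorineqone} I take $\mu=\eps$ throughout Lemma \ref{estrotrotcmax}. The tangential-normal half gives $\normltepsom{E}\leq\cmgadrotepseps\normltepsmoom{\rot E}$ on $\rgadom\cap\epsmo\rot\rganom$, while the normal-tangential half gives $\normltepsom{H}\leq\cmganrotepseps\normltepsmoom{\rot H}$ on $\rganom\cap\epsmo\rot\rgadom$. By the identities \eqref{rotconsteq} both constants equal $\cmgadroteps$, so the first inequality of \eqref{estrotrotcmaxcorineqone} holds uniformly on the union of the two spaces with a single constant. The second inequality is immediate from the pointwise weight-comparison bound $\normltepsmoom{F}\leq\epsu\normltom{F}$ from the preliminaries, applied to $F=\rot E$.

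The sharpness of $\cmgadroteps$ in the first half of \eqref{estrotrotcmaxcorineqone} is inherited directly from the variational characterization $\cmgadroteps=1/\sqrt{\kappa_{\gad,\eps,\eps}}$ supplied by Lemma \ref{estrotrotcmax}, and the factor $\epsu$ in the second half is the sharp constant in the elementary weight-comparison inequality between $\normltepsmoom{\,\cdot\,}$ and $\normltom{\,\cdot\,}$. The closing remark, that the constants in \eqref{estrotrotcmaxcorineqtwo} and \eqref{estrotrotcmaxcorineqthree} need not coincide when $\eps\neq\id$, follows because Lemma \ref{estrotrotcmax} only equates $\cmgadrotepsmu$ with $\cmganrotmueps$ (with the two weight positions swapped), not with $\cmganrotepsmu$; the swap is innocuous exactly in the symmetric case $\mu=\eps$ exploited above, but not otherwise. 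Since every step is a direct specialization of a result already at our disposal, I anticipate no real obstacle; the only care needed is to keep track of which weight plays which role when applying Lemma \ref{estrotrotcmax}.
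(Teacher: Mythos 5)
Your proposal is correct and is exactly the argument the paper intends: the corollary is stated without proof precisely because it is a direct specialization of Lemma \ref{estrotrotcmax} to $\mu=\eps$ (for \eqref{estrotrotcmaxcorineqone}, using \eqref{rotconsteq} to merge the two constants on the union) and to one weight equal to $\id$ (for \eqref{estrotrotcmaxcorineqtwo} and \eqref{estrotrotcmaxcorineqthree}), followed by the elementary comparison $\normltepsmoom{\rot E}\leq\epsu\normltom{\rot E}$. Your closing observation about why $\cmgadrotepsid$ and $\cmganrotepsid$ need not coincide for $\eps\neq\id$ is also the right reading of the lemma's swapped-weight identity.
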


\begin{lem}
\mylabel{estrotrotcmaxepsmuest}
It holds
\begin{itemize}
\item[\bf(i)]
$\epsu^{-2}\cmgadrot\leq\cmgadroteps\leq\epso^2\cmgadrot$,
\item[\bf(ii)]
$\cmgadrotepsid,\cmganrotepsid
\begin{cases}
\leq\min\{\epsu\cmgadroteps,\epso\cmgadrot\}
\leq\epso\cmgadrot,\\
\geq\max\{\epso^{-1}\cmgadroteps,\epsu^{-1}\cmgadrot\}
\geq\epsu^{-1}\cmgadrot.
\end{cases}$
\end{itemize}
\end{lem}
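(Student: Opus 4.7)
The main obstacle is that the ``solenoidal'' subspaces $\rgadom\cap\rot\rganom$ and $\rgadom\cap\epsmo\rot\rganom$ are genuinely distinct: by \eqref{introhelmdeco} they are, respectively, the $\ltom$- and $\ltepsom$-orthogonal complements of $\rgadzom$ inside $\rgadom$. Hence the $\eps=\id$ estimate of Lemma \ref{estrotrotcmax} cannot be transferred to the $\eps$-weighted setting by a mere norm equivalence. My plan is to bridge the two subspaces via a Helmholtz-type projection combined with a cross-orthogonality observation, and then to chain the estimates through the standard norm equivalences between $\normltom{\,\cdot\,}$, $\normltepsom{\,\cdot\,}$, and $\normltepsmoom{\,\cdot\,}$.

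\textbf{Key observation.} Given $E\in\rgadom\cap\epsmo\rot\rganom$, I decompose $E=E_{0}+E_{s}$ $\ltom$-orthogonally with $E_{0}\in\rgadzom$ and $E_{s}\in\rgadom\cap\rot\rganom$, so that $\rot E_{s}=\rot E$. Because $\eps E\in\rot\rganom\subset\dganzom$ is $\ltom$-orthogonal to $\rgadzom$, one has $0=\scpltom{\eps E}{E_{0}}=\normltepsom{E_{0}}^{2}+\scpltepsom{E_{s}}{E_{0}}$, and expanding $\normltepsom{E_{0}+E_{s}}^{2}$ yields
\begin{align*}
\normltepsom{E}^{2}=\normltepsom{E_{s}}^{2}-\normltepsom{E_{0}}^{2}\leq\normltepsom{E_{s}}^{2}.
\end{align*}
The symmetric observation, for $E\in\rgadom\cap\rot\rganom$ and its $\ltepsom$-orthogonal decomposition $E=F_{0}+F_{s}$ with $F_{s}\in\rgadom\cap\epsmo\rot\rganom$, gives $\normltom{E}\leq\normltom{F_{s}}$. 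I expect this cross-orthogonality step to be the one non-routine ingredient.

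\textbf{Chaining the estimates for (i).} Combining the first bound with Lemma \ref{estrotrotcmax} applied with $\eps=\mu=\id$ to $E_{s}$ (i.e.~$\normltom{E_{s}}\leq\cmgadrot\normltom{\rot E}$) and with the equivalences $\normltepsom{\,\cdot\,}\leq\epso\normltom{\,\cdot\,}$ and $\normltom{\,\cdot\,}\leq\epso\normltepsmoom{\,\cdot\,}$, I obtain
\begin{align*}
\normltepsom{E}\leq\normltepsom{E_{s}}\leq\epso\normltom{E_{s}}\leq\epso\cmgadrot\normltom{\rot E}\leq\epso^{2}\cmgadrot\normltepsmoom{\rot E},
\end{align*}
which is $\cmgadroteps\leq\epso^{2}\cmgadrot$. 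The reverse bound $\cmgadrot\leq\epsu^{2}\cmgadroteps$ follows from the symmetric chain starting at $\normltom{E}\leq\normltom{F_{s}}$, applying the weighted Maxwell estimate to $F_{s}$, and inserting $\normltom{\,\cdot\,}\leq\epsu\normltepsom{\,\cdot\,}$ and $\normltepsmoom{\,\cdot\,}\leq\epsu\normltom{\,\cdot\,}$.

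\textbf{Estimates for (ii).} The four bounds for $\cmgadrotepsid$ arise as shorter versions of the same two arguments. Stopping the first chain one step earlier at $\normltom{\rot E}$ gives $\cmgadrotepsid\leq\epso\cmgadrot$, and the analogous shortening of the second chain yields $\cmgadrot\leq\epsu\cmgadrotepsid$. The remaining two bounds $\cmgadrotepsid\leq\epsu\cmgadroteps$ and $\cmgadroteps\leq\epso\cmgadrotepsid$ follow directly from the defining inequality for $\cmgadroteps$ (respectively $\cmgadrotepsid$) by inserting a single norm equivalence between $\normltepsmoom{\rot E}$ and $\normltom{\rot E}$. The corresponding estimates for $\cmganrotepsid$ are obtained by interchanging $\gad\leftrightarrow\gan$ throughout and invoking the identities $\cmgadrot=\cmganrot$ and $\cmgadroteps=\cmganroteps$ from \eqref{rotconsteq}.
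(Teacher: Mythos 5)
Your proof is correct and follows essentially the same route as the paper: the same Helmholtz-type decomposition of $E$ against $\rgadzom$ and the same cross-orthogonality of $\eps E\in\rot\rganom$ with $\rgadzom$ (the paper exploits it via Cauchy--Schwarz, you by expanding the square --- both work). If anything, you are more explicit on the lower bounds, where the paper merely combines the inequalities of Corollary \ref{estrotrotcmaxcor} with norm equivalences and leaves implicit the transfer between the two solenoidal subspaces $\rgadom\cap\rot\rganom$ and $\rgadom\cap\epsmo\rot\rganom$; your symmetric decomposition $E=F_{0}+F_{s}$ supplies exactly that step.
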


\begin{proof}
It is clear that $\cmgadrotepsid,\cmganrotepsid\leq\epsu\cmgadroteps$ holds.
To prove the other estimates, let $E\in\rgadom\cap\epsmo\rot\rganom$.
We decompose (see Appendix \ref{appmaxhelm}) 
$$E=E_{0}+E_{\rot}\in\rgadzom\oplus\rot\rganom.$$
Then $E_{\rot}\in\rgadom\cap\rot\rganom$ and $\rot E=\rot E_{\rot}$. 
Thus by orthogonality
$$\normltepsom{E}^2
=\scpLtom{\eps E}{E_{\rot}}
\leq\cmgadrot
\underbrace{\normltom{\eps E}}_{\leq\epso\normltepsom{E}}
\normltom{\rot E}$$
and hence
$$\normltepsom{E}
\leq\epso\cmgadrot\normltom{\rot E}
\leq\epso^2\cmgadrot\normltepsmoom{\rot E}.$$
This shows $\cmgadrotepsid\leq\epso\cmgadrot$
and $\cmgadroteps\leq\epso^2\cmgadrot$.
Interchanging $\gad$ and $\gan$ proves 
$\cmganrotepsid\leq\epso\cmganrot=\epso\cmgadrot$.
By $\epsu^{-1}\normltom{E}\leq\normltepsom{E}$ 
and \eqref{estrotrotcmaxcorineqone} 
resp.~\eqref{estrotrotcmaxcorineqtwo}
resp.~\eqref{estrotrotcmaxcorineqthree}
we see $\cmgadrot\leq\epsu^2\cmgadroteps$ 
resp.~$\epsu^{-1}\cmgadrot\leq\cmgadrotepsid,\cmganrotepsid$.
Using $\normltom{\rot E}\leq\epso\normltepsmoom{\rot E}$ 
and \eqref{estrotrotcmaxcorineqtwo}, \eqref{estrotrotcmaxcorineqthree}
we get $\epso^{-1}\cmgadroteps\leq\cmgadrotepsid,\cmganrotepsid$,
which completes the proof.
\end{proof}

\subsubsection{The Full Maxwell Estimates}

\begin{theo}
\mylabel{maintheononconvex}
For all $E\in\rgadom\cap\epsmo\dganom$
the tangential-normal Maxwell estimate
$$\normltepsom{E-\pidn E}^2
\leq\cpgadeps^2\normltom{\div\eps E}^2+\cmgadrotepsid^2\normltom{\rot E}^2$$
holds with sharp constants.
Moreover, $\cpgadeps\leq\epsu\cpgad$ 
and $\cmgadrotepsid\leq\epso\cmgadrot$.
\end{theo}

\begin{proof}
By the Helmholtz decomposition (see Appendix \ref{appmaxhelm}) we have
$$\rgadom\cap\epsmo\dganom\cap\harmdnepsom^{\bot_{\eps}}\ni
E-\pidn E=E_{\na}+E_{\rot}\in\na\hogadom\oplus_{\eps}\epsmo\rot\rganom$$
with
\begin{align*}
E_{\na}
&\in\epsmo\dganom\cap\na\hogadom
=\rgadzom\cap\epsmo\dganom\cap\harmdnepsom^{\bot_{\eps}},&
\div\eps E_{\na}
&=\div\eps E,\\
E_{\rot}
&\in\rgadom\cap\epsmo\rot\rganom
=\rgadom\cap\epsmo\dganzom\cap\harmdnepsom^{\bot_{\eps}},&
\rot E_{\rot}
&=\rot E.
\end{align*}
Thus, by Lemma \ref{estgraddivcpdiv} and Corollary \ref{estrotrotcmaxcor}
as well as orthogonality we obtain
$$\normltepsom{E-\pidn E}^2
=\normltepsom{E_{\na}}^2+\normltepsom{E_{\rot}}^2
\leq\cpgadeps^2\normltom{\div\eps E}^2
+\cmgadrotepsid^2\normltom{\rot E}^2.$$
Lemmas \ref{estgraddivcpdivepsest} and \ref{estrotrotcmaxepsmuest}
show the two estimates for the constants,
completing the proof.
\end{proof}

\begin{lem}
\mylabel{maintheononconvexlem}
It holds
$$\cmgadeps
=\max\{\cpgadeps,\cmgadrotepsid\}
\begin{cases}
\leq\max\{\epsu\cpgad,\epso\cmgadrot\}
\leq\epsh\max\{\cpgad,\cmgadrot\}\\
\geq\max\{\epso^{-1}\cpgad,\epsu^{-1}\cmgadrot\}
\geq\epsch\max\{\cpgad,\cmgadrot\}
\end{cases}$$
and for $\eps=\id$
$$\cmgad=\max\{\cpgad,\cmgadrot\}.$$
\end{lem}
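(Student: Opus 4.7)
The plan is to show two matching inequalities for $\cmgadeps$, namely
$$\max\{\cpgadeps,\cmgadrotepsid\}\leq\cmgadeps\leq\max\{\cpgadeps,\cmgadrotepsid\},$$
and then push the resulting equality through the $\eps$-comparison lemmas already established.

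For the upper bound, I would invoke Theorem \ref{maintheononconvex}: for every $E\in\rgadom\cap\epsmo\dganom$,
$$\normltepsom{E-\pidn E}^2
\leq\cpgadeps^2\normltom{\div\eps E}^2+\cmgadrotepsid^2\normltom{\rot E}^2
\leq\max\{\cpgadeps^2,\cmgadrotepsid^2\}\bigl(\normltom{\div\eps E}^2+\normltom{\rot E}^2\bigr),$$
so by the definition of $\cmgadeps$ as the sharp constant we immediately get $\cmgadeps\leq\max\{\cpgadeps,\cmgadrotepsid\}$.

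For the lower bound, the idea is to test the Maxwell inequality on the two summands of the Helmholtz decomposition separately. First, for any $E\in\epsmo\dganom\cap\na\hogadom$ we have $\rot E=0$ and $E\perp_{\eps}\harmdnepsom$, so that $E-\pidn E=E$ and the Maxwell estimate reduces to $\normltepsom{E}\leq\cmgadeps\normltom{\div\eps E}$. Taking the infimum and using Lemma \ref{estgraddivcpdiv} yields $\cpgadeps=\cmgandiveps\leq\cmgadeps$. Similarly, for $E\in\rgadom\cap\epsmo\rot\rganom$ we have $\div\eps E=0$ and $E\perp_{\eps}\harmdnepsom$, so the Maxwell estimate gives $\normltepsom{E}\leq\cmgadeps\normltom{\rot E}$, and from the sharp constant of Corollary \ref{estrotrotcmaxcor} we read off $\cmgadrotepsid\leq\cmgadeps$. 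Combining both yields $\max\{\cpgadeps,\cmgadrotepsid\}\leq\cmgadeps$, hence equality.

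Once $\cmgadeps=\max\{\cpgadeps,\cmgadrotepsid\}$ is established, the bracketed inequalities follow mechanically. From Lemma \ref{estgraddivcpdivepsest} we have $\epso^{-1}\cpgad\leq\cpgadeps\leq\epsu\cpgad$ and from Lemma \ref{estrotrotcmaxepsmuest}(ii) we have $\epsu^{-1}\cmgadrot\leq\cmgadrotepsid\leq\epso\cmgadrot$. Taking maxima gives
$$\max\{\cpgadeps,\cmgadrotepsid\}\leq\max\{\epsu\cpgad,\epso\cmgadrot\}\leq\epsh\max\{\cpgad,\cmgadrot\}$$
and analogously the lower bound in terms of $\epsch=\epsh^{-1}$. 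Finally, for $\eps=\id$ both sandwiches collapse because $\epsu=\epso=1$, giving $\cmgad=\max\{\cpgad,\cmgadrot\}$.

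There is no real obstacle here: the only thing to be careful about is verifying that each summand of the Helmholtz decomposition actually lies in the full admissible space $\rgadom\cap\epsmo\dganom\cap\harmdnepsom^{\bot_{\eps}}$, so that the Maxwell inequality may be applied to it; this is precisely what the decomposition recalled in the proof of Theorem \ref{maintheononconvex} provides.
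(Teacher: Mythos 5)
Your proof is correct and follows essentially the same route as the paper: the upper bound from Theorem \ref{maintheononconvex}, the lower bound by inserting the two Helmholtz summands $E\in\epsmo\dganom\cap\na\hogadom$ and $E\in\rgadom\cap\epsmo\rot\rganom$ into the Maxwell estimate \eqref{intro-estrotdiv}, and the $\eps$-comparisons from Lemmas \ref{estgraddivcpdivepsest} and \ref{estrotrotcmaxepsmuest}. No gaps.
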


\begin{proof}
We have $\cmgadeps\leq\max\{\cpgadeps,\cmgadrotepsid\}$.
Inserting $E\in\epsmo\dganom\cap\na\hogadom$ resp.
$E\in\rgadom\cap\epsmo\rot\rganom$ into the tangential-normal Maxwell estimate
\eqref{intro-estrotdiv} shows $\cpgadeps,\cmgadrotepsid\leq\cmgadeps$ 
and the first equation follows.
The other estimates are given by Lemmas \ref{estgraddivcpdivepsest} and \ref{estrotrotcmaxepsmuest},
completing the proof.
\end{proof}

By the latter theorem and lemma it remains to estimate only 
the two constants $\cpgad$ and $\cmgadrot$ for the various $\gad$.

\subsection{Full Boundary Conditions}

We summarize our results for the two important extreme cases $\gad=\ga$ resp.~$\gad=\emptyset$, i.e.,
the full tangential resp.~the full normal case, and emphasize that in these two cases
the tangential and normal Maxwell rotation constants coincide by \eqref{rotconsteq} 
and hence beside the Poincar\'e constants we just have to estimate one constant, namely
\begin{align}
\mylabel{cmrotgaeqempty}
\cmroteps:=\cmgaroteps=\cmemptyroteps,\quad
\cmrot=\cmgarot=\cmemptyrot.
\end{align}

For the convenience of the reader
let us recall our estimates from the latter sections in these two extreme cases.
Lemmas \ref{estgraddivcpdiv} and \ref{estgraddivcpdivepsest} read:

\begin{cor}
\mylabel{estgraddivcpdivfullboundary}
The Poincar\'e constant $\cpgaeps$ in $\hogaom$ 
resp.~$\cpeps$ in $\hoemptyom$
and the Maxwell divergence constant $\cmemptydiveps$ in $\epsmo\dom\cap\na\hogaom$ 
resp.~$\cmgadiveps$ in $\epsmo\dgaom\cap\na\hoom$ equal, i.e.,
the inequalities
\begin{align*}
\forall\,u&\in\hogaom&
\normltom{u}&\leq\cpgaeps\normltepsom{\na u},\\
\forall\,E&\in\epsmo\dom\cap\na\hogaom&
\normltepsom{E}&\leq\cpgaeps\normltom{\div\eps E}
\intertext{resp.}
\forall\,u&\in\hoom\cap\rz^{\bot}&
\normltom{u}&\leq\cpeps\normltepsom{\na u},\\
\forall\,E&\in\epsmo\dgaom\cap\na\hoom&
\normltepsom{E}&\leq\cpeps\normltom{\div\eps E}
\end{align*}
hold with sharp constants. 
Moreover, $\epso^{-1}\cpga\leq\cpgaeps\leq\epsu\cpga$
and $\epso^{-1}\cp\leq\cpeps\leq\epsu\cp$.
\end{cor}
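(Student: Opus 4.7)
The plan is to obtain this corollary as a direct specialization of Lemma \ref{estgraddivcpdiv} and Lemma \ref{estgraddivcpdivepsest} to the two extreme configurations of the boundary decomposition $\ga=\ol{\gad}\cup\ol{\gan}$. First I would set $\gad=\ga$ (so $\gan=\emptyset$), which gives $\dganom=\dom$ and $\hogadom=\hogaom$, and observe that by Lemma \ref{estgraddivcpdiv} the common value of $\cpgaeps$ and $\cmemptydiveps$ is $1/\sqrt{\lambda_{\ga,\eps}}$. Then I would set $\gad=\emptyset$ (so $\gan=\ga$), which gives $\dganom=\dgaom$ and, by our convention, $\hogadom=\hoemptyom=\hoom\cap\rz^{\bot}$; Lemma \ref{estgraddivcpdiv} then yields $\cpeps=\cmgadiveps=1/\sqrt{\mu_{2,\eps}}$, where $\mu_{2,\eps}$ is the first positive weighted Neumann eigenvalue.

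Next, for the four inequalities displayed in the corollary I would simply rewrite the two inequalities from Lemma \ref{estgraddivcpdiv} in each of the two configurations. The sharpness of the constants is inherited from that lemma, since those constants are defined as the respective infima.

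The remaining claim concerns the comparison with the unweighted Poincar\'e constants. This is a direct application of Lemma \ref{estgraddivcpdivepsest}: specializing $\gad=\ga$ gives $\epso^{-1}\cpga\leq\cpgaeps\leq\epsu\cpga$, and specializing $\gad=\emptyset$ (so that $\cpgadeps$ becomes $\cpeps$ and $\cpgad$ becomes $\cp$) gives $\epso^{-1}\cp\leq\cpeps\leq\epsu\cp$.

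There is no genuine obstacle here: all of the analytic work — the identification of the best constants via the spectral characterization, the density/approximation property $D(\As)=\epsmo\dganom$, and the $\Li$-bounds comparing weighted and unweighted norms — has already been carried out in the preceding lemmas. The only point requiring a remark is that, in the case $\gad=\emptyset$, the usage of the space $\hoemptyom=\hoom\cap\rz^{\bot}$ (rather than all of $\hoom$) is consistent with the fact that constants lie in $N(\na)$ and are therefore naturally quotiented out when one passes to the Helmholtz decomposition; this is precisely the setting in which Lemma \ref{estgraddivcpdiv} was formulated, so nothing beyond a change of labels is needed.
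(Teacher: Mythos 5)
Your proposal is correct and matches the paper's own treatment exactly: the corollary is obtained by specializing Lemmas \ref{estgraddivcpdiv} and \ref{estgraddivcpdivepsest} to the two extreme cases $\gad=\ga$ and $\gad=\emptyset$, which is precisely what the paper does (it introduces the corollary with ``Lemmas \ref{estgraddivcpdiv} and \ref{estgraddivcpdivepsest} read:''). Your closing remark on $\hoemptyom=\hoom\cap\rz^{\bot}$ is also consistent with the conventions fixed in the introduction.
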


Here, $\cpeps:=\cpemptyeps$. 
Corollary \ref{estrotrotcmaxcor} and Lemma \ref{estrotrotcmaxepsmuest} read:

\begin{cor}
\mylabel{estrotrotcmaxcorfullboundary}
The tangential Maxwell rotation constant $\cmgaroteps$
in $\rgaom\cap\epsmo\rot\rom$ 
and the normal Maxwell rotation constant $\cmemptyroteps$
in $\rom\cap\epsmo\rot\rgaom$ equal, i.e.,
for all $E\in\big(\rgaom\cap\epsmo\rot\rom\big)\cup\big(\rom\cap\epsmo\rot\rgaom\big)$
$$\normltepsom{E}
\leq\cmroteps\normltepsmoom{\rot E}
\leq\epsu\cmroteps\normltom{\rot E}$$
holds with sharp constants. Moreover, the inequalities 
\begin{align*}
\forall\,E&\in\rgaom\cap\epsmo\rot\rom&
\normltepsom{E}&\leq\cmgarotepsid\normltom{\rot E},\\
\forall\,H&\in\rom\cap\epsmo\rot\rgaom&
\normltepsom{H}&\leq\cmemptyrotepsid\normltom{\rot H}
\end{align*}
hold, where these sharp constants do not need to coincide if $\eps\neq\id$.
Moreover, it holds $\epsu^{-2}\cmrot\leq\cmroteps\leq\epso^2\cmrot$ and
\begin{align*}
\epsu^{-1}\cmrot
\leq\max\{\epso^{-1}\cmroteps,\epsu^{-1}\cmrot\}
&\leq\cmgarotepsid,\cmemptyrotepsid\\
&\leq\min\{\epsu\cmroteps,\epso\cmrot\}
\leq\epso\cmrot.
\end{align*}
\end{cor}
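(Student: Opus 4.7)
This corollary is a direct specialization of Corollary \ref{estrotrotcmaxcor} and Lemma \ref{estrotrotcmaxepsmuest} to the full-boundary regimes $\gad=\ga$ (with $\gan=\emptyset$) and $\gad=\emptyset$ (with $\gan=\ga$). My plan is simply to carry out that specialization and to translate the notation.

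The key observation is that, under either extreme choice of boundary, the general identification $\cmgadroteps=\cmganroteps$ from Lemma \ref{estrotrotcmax} (recorded in \eqref{rotconsteq}) reduces both $\cmgaroteps$ and $\cmemptyroteps$ to the single constant $\cmroteps$ introduced in \eqref{cmrotgaeqempty}, and similarly $\cmgarot=\cmemptyrot=\cmrot$. So the main equality claim of the corollary is already built into the notation. The first displayed inequality chain is then precisely \eqref{estrotrotcmaxcorineqone} applied with $\gad=\ga$, once one observes that the set $(\rgadom\cap\epsmo\rot\rganom)\cup(\rganom\cap\epsmo\rot\rgadom)$ collapses to $(\rgaom\cap\epsmo\rot\rom)\cup(\rom\cap\epsmo\rot\rgaom)$. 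The two individual-constant inequalities are likewise \eqref{estrotrotcmaxcorineqtwo} and \eqref{estrotrotcmaxcorineqthree} with $\gad=\ga$, after using $\cmganrotepsid=\cmemptyrotepsid$. Finally, the $\eps$-bounds follow from Lemma \ref{estrotrotcmaxepsmuest}(i)--(ii) with $\gad=\ga$, where once again the identifications $\cmgarot=\cmrot$ and $\cmgaroteps=\cmroteps$ collapse the chains to exactly the form stated.

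There is essentially no mathematical obstacle; the only point to be careful about is indexing hygiene. In the general Lemmas \ref{estrotrotcmax} and \ref{estrotrotcmaxepsmuest} the $\gad$- and $\gan$-sides play distinct roles, but under full boundary conditions these two sides are swapped into each other and must produce the same admissible vector-field spaces and the same sharp constants. This is exactly the content of \eqref{cmrotgaeqempty}, so after that identification the proof amounts to a transcription.
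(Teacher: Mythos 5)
Your proposal is correct and matches the paper exactly: the paper introduces this corollary with the phrase ``Corollary \ref{estrotrotcmaxcor} and Lemma \ref{estrotrotcmaxepsmuest} read:'', i.e., it is obtained precisely by specializing those two results to $\gad=\ga$ and $\gad=\emptyset$ and invoking the identifications \eqref{rotconsteq} and \eqref{cmrotgaeqempty}. Nothing further is needed.
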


Theorem \ref{maintheononconvex} and Lemma \ref{maintheononconvexlem} read:

\begin{cor}
\mylabel{maintheononconvexfullboundary}
For all $E\in\rgaom\cap\epsmo\dom$ and all $H\in\rom\cap\epsmo\dgaom$
the tangential and normal Maxwell estimates
\begin{align*}
\normltepsom{E-\pid E}^2
&\leq\cpgaeps^2\normltom{\div\eps E}^2+\cmgarotepsid^2\normltom{\rot E}^2,\\
\normltepsom{H-\pin H}^2
&\leq\cpeps^2\normltom{\div\eps H}^2+\cmemptyrotepsid^2\normltom{\rot H}^2
\end{align*}
hold with sharp constants.
Furthermore, the estimates $\epso^{-1}\cpga\leq\cpgaeps,\cpeps\leq\epsu\cp$ 
and $\epsu^{-1}\cmrot\leq\cmgarotepsid,\cmemptyrotepsid\leq\epso\cmrot$ as well as
\begin{align*}
\cmgaeps
&=\max\{\cpgaeps,\cmgarotepsid\}
\begin{cases}
\leq\max\{\epsu\cpga,\epso\cmrot\}
\leq\epsh\max\{\cpga,\cmrot\},\\
\geq\max\{\epso^{-1}\cpga,\epsu^{-1}\cmrot\}
\geq\epsch\max\{\cpga,\cmrot\},
\end{cases}\\
\cmemptyeps
&=\max\{\cpeps,\cmemptyrotepsid\}
\begin{cases}
\leq\max\{\epsu\cp,\epso\cmrot\}
\leq\epsh\max\{\cp,\cmrot\},\\
\geq\max\{\epso^{-1}\cp,\epsu^{-1}\cmrot\}
\geq\epsch\max\{\cp,\cmrot\}
\end{cases}
\end{align*}
hold. Therefore, in both cases
\begin{align*}
\epsch\max\{\cpga,\cmrot\}
\leq\max\{\epso^{-1}\cpga,\epsu^{-1}\cmrot\}
&\leq\cmgaeps,\cmemptyeps\\
&\leq\max\{\epsu\cp,\epso\cmrot\}
\leq\epsh\max\{\cp,\cmrot\}.
\end{align*}
For $\eps=\id$ it holds
$$\cmga=\max\{\cpga,\cmrot\},\quad
\cmempty=\max\{\cp,\cmrot\}.$$
\end{cor}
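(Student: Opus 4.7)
The corollary is presented as a direct specialization of Theorem \ref{maintheononconvex} and Lemma \ref{maintheononconvexlem} to the two extreme boundary configurations $\gad=\ga$ and $\gad=\emptyset$, and its proof should essentially be a book-keeping exercise combining these with the earlier constant-estimates. The plan is as follows.

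First, I would apply Theorem \ref{maintheononconvex} with $\gad=\ga$ (so $\gan=\emptyset$, $\dganom=\dom$, $\rganom=\rom$, $\harmdnepsom=\harmdepsom$, and $\pidn=\pid$) to obtain the tangential Maxwell estimate, and then with $\gad=\emptyset$ (so $\gan=\ga$, $\dganom=\dgaom$, $\rganom=\rgaom$, $\harmdnepsom=\harmnepsom$, and $\pidn=\pin$) to obtain the normal Maxwell estimate. Sharpness is inherited from Theorem \ref{maintheononconvex}.

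Second, the bounds on the Poincar\'e and rotation constants follow by specializing Lemmas \ref{estgraddivcpdivepsest} and \ref{estrotrotcmaxepsmuest}. For the Poincar\'e part, directly $\epso^{-1}\cpga\leq\cpgaeps\leq\epsu\cpga$ and $\epso^{-1}\cp\leq\cpeps\leq\epsu\cp$; then using the well-known comparison $\cpga\leq\cp$ recalled in \eqref{friedrichspoincareconstest}, both $\cpgaeps$ and $\cpeps$ are bounded above by $\epsu\cp$ and below by $\epso^{-1}\cpga$ uniformly. For the rotation part, invoking \eqref{cmrotgaeqempty} to identify $\cmgarot=\cmemptyrot=\cmrot$ (and analogously for the $\eps$-weighted constants), Lemma \ref{estrotrotcmaxepsmuest} immediately gives $\epsu^{-1}\cmrot\leq\cmgarotepsid,\cmemptyrotepsid\leq\epso\cmrot$.

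Third, I would apply Lemma \ref{maintheononconvexlem} in each of the two cases to obtain the max-characterizations
\[
\cmgaeps=\max\{\cpgaeps,\cmgarotepsid\},\qquad
\cmemptyeps=\max\{\cpeps,\cmemptyrotepsid\},
\]
and substitute the bounds from the previous step into each of the two arguments of the max to arrive at the displayed chains of inequalities. Combining both chains and again using $\cpga\leq\cp$ yields the uniform sandwich
\[
\epsch\max\{\cpga,\cmrot\}\leq\cmgaeps,\cmemptyeps\leq\epsh\max\{\cp,\cmrot\},
\]
valid in both configurations. Finally, the $\eps=\id$ identities $\cmga=\max\{\cpga,\cmrot\}$ and $\cmempty=\max\{\cp,\cmrot\}$ are the cases $\epsu=\epso=\epsh=\epsch=1$ of the max characterization, using $\cpgaid=\cpga$, $\cpemptyid=\cp$ and $\cmgarotidid=\cmemptyrotidid=\cmrot$. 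Since the corollary is purely a re-reading of results already proved, there is no genuine obstacle; the only care needed is to track the identifications in \eqref{rotconsteq}--\eqref{cmrotgaeqempty} and to use $\cpga\leq\cp$ at the right moment to merge the two cases into a single uniform sandwich.
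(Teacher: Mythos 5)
Your proposal is correct and matches the paper exactly: the corollary carries no separate proof there, being introduced with ``Theorem \ref{maintheononconvex} and Lemma \ref{maintheononconvexlem} read:'' and obtained precisely by specializing those results (together with Lemmas \ref{estgraddivcpdivepsest} and \ref{estrotrotcmaxepsmuest} and the identifications \eqref{rotconsteq}, \eqref{cmrotgaeqempty}) to $\gad=\ga$ and $\gad=\emptyset$, with $\cpga<\cp$ used to merge the two chains. Your book-keeping is accurate, so nothing further is needed.
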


As the two Poincar\'e constants $\cpga<\cp$ are more or less well known,
by the latter corollaries it remains only to estimate the Maxwell constant $\cmrot$.

\subsubsection{Convex Domains}

Now, let $\om\subset\rt$ be a bounded and \ul{convex} domain.
Then $\om$ is strongly Lipschitz, see e.g.~\cite[Corollary 1.2.2.3]{grisvardbook}.
Moreover, there are no Dirichlet or Neumann fields since $\om$ is simply connected 
and has a connected boundary.
As noted before in \eqref{friedrichspoincareconstest},
in the convex case we can estimate the Poincar\'e constant $\cp$ 
by the diameter of $\om$, i.e.,
$$\cpga<\cp\leq\frac{\diam(\om)}{\pi}.$$
We show that we can also estimate the Maxwell constant $\cmrot$
in the two extreme cases $\gad=\ga$ resp.~$\gad=\emptyset$ by $\cp$.
In \cite[Theorem 2.17]{amrouchebernardidaugegiraultvectorpot} 
the following crucial lemma has been proved,
which is the key point in our investigations for convex domains.

\begin{lem}
\mylabel{french}
Let $E$ belong to $\rgaom\cap\dom$ or $\rom\cap\dgaom$.
Then $E\in\hoom$ and
\begin{align}
\mylabel{frenchformula}
\normltom{\na E}^2\leq\normltom{\rot E}^2+\normltom{\div E}^2.
\end{align}
\end{lem}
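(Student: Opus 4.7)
The plan is to prove the inequality first under smoothness assumptions, where integration by parts exposes the role of the boundary geometry, and then to recover the general convex case by approximation. If $\ga$ is of class $C^2$ and $E$ is smooth up to the boundary with either vanishing tangential trace ($E\times n=0$, covering $E\in\rgaom\cap\dom$) or vanishing normal trace ($E\cdot n=0$, covering $E\in\rom\cap\dgaom$), then a pointwise algebraic identity
\[
|\na E|^2-|\rot E|^2-(\div E)^2=\div\bigl((E\cdot\na)E-(\div E)E\bigr)
\]
combined with Gauss's theorem yields
\[
\normltom{\na E}^2-\normltom{\rot E}^2-\normltom{\div E}^2=\int_{\ga}Q(E,E)\,\mathrm{d}s,
\]
where $Q$ is a quadratic form in the boundary trace of $E$. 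A short tangential computation shows that, under each of the two boundary conditions, $Q(E,E)$ reduces to the second fundamental form of $\ga$ applied to the non-vanishing component of $E|_{\ga}$. With the outward-normal convention, convexity of $\om$ renders this form negative semidefinite, so the right-hand side is non-positive and the inequality follows in the smooth setting.

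To move from smooth vector fields to general $E\in\rgaom\cap\dom$ (respectively $\rom\cap\dgaom$) on a smooth convex $\om$, I would invoke the standard density of smooth fields with the same boundary condition in these graph spaces and transfer the estimate to the limit, concluding in particular that $E\in\hoom$. This density step is a routine local flattening plus mollification argument once $\ga$ is $C^2$.

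For a general bounded convex $\om$, the idea is to approximate $\om$ from inside by a nested sequence of smooth convex subdomains $\om_k\nearrow\om$. Given $E$ on $\om$, one constructs on each $\om_k$ an auxiliary field $E_k\in H^1(\om_k)$ satisfying a div--rot system with data $\rot E|_{\om_k}$ and $\div E|_{\om_k}$ and the matching full boundary condition on $\partial\om_k$, applies the smooth-case bound, and uses that convexity is preserved so the boundary term remains non-positive on each $\om_k$. Weak $H^1$-compactness on any compactly contained subdomain of $\om$ then yields $E\in\hoom$ and passes the inequality to the limit.

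The main obstacle is this last step: one has to produce approximations $E_k$ whose traces fit $\partial\om_k$ and whose divergence and rotation converge back to $\div E$ and $\rot E$ on $\om$, while retaining the non-positivity of the boundary term even though the curvature of $\partial\om_k$ may blow up near nonsmooth points of $\ga$. The key point is that only the sign, not the magnitude, of the second fundamental form enters the estimate, and this sign is stable under any convexity-preserving smoothing.
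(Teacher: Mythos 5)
You should first note that the paper itself gives no proof of Lemma \ref{french}: it is imported verbatim from \cite[Theorem 2.17]{amrouchebernardidaugegiraultvectorpot} (with the case $\rgaom\cap\dom$ going back to \cite{saranenineqfried}), so your proposal has to be measured against the proof of the cited theorem --- and it is in fact an accurate outline of that proof. The pointwise identity $|\na E|^2-|\rot E|^2-(\div E)^2=\div\bigl((E\cdot\na)E-(\div E)E\bigr)$ is correct (it is the contraction $\partial_iE_j\partial_jE_i=|\na E|^2-|\rot E|^2$ combined with the product rule), and the reduction of the resulting boundary integrand to the second fundamental form of $\ga$ acting on the surviving trace component --- the tangential part when $E\cdot n=0$, the normal part (via the mean curvature) when $E\times n=0$ --- with the favourable sign on convex domains is exactly the classical Rellich/Grisvard argument used in the smooth case.

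The one place where your sketch stops short of a proof is the passage from smooth convex to general convex domains, and the obstacle is not the one you emphasize. The possible blow-up of the curvature of $\partial\om_k$ is indeed harmless, since only its sign enters; the genuinely delicate point is the identification of the limit, i.e.\ showing $E_k\to E$. On each $\om_k$ the difference $E-E_k$ has vanishing rotation and divergence, and since $\om_k$ is convex (simply connected, connected boundary) it is the gradient of a harmonic function whose boundary data on $\partial\om_k$ are the traces of $E$ there (because $E_k$ carries the homogeneous condition on $\partial\om_k$ while $E$ only does so on $\ga$). One must prove that these traces tend to zero in a norm strong enough to force $\normltom{E-E_k}_{L^2(\om_k)}\to0$, which requires a quantitative statement about how $\om_k$ exhausts $\om$ (uniform Lipschitz character, convergence of the boundaries) together with the fact that $E$ satisfies the homogeneous condition on $\ga$. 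This is precisely the content of the technical approximation lemma preceding Theorem 2.17 in \cite{amrouchebernardidaugegiraultvectorpot}; weak $H^1$-compactness on compactly contained subdomains, as invoked in your last step, yields \emph{a} limit field in $\hoom$ but does not by itself show that this limit is $E$. So the plan is the right one, but that convergence argument is the missing substance, not a routine detail.
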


We note that the latter lemma has already been
proved in \cite{saranenineqfried} in the case $\rgaom\cap\dom$.

\begin{rem}
\mylabel{frenchrem}
For $E\in\hogaom$ it is clear that for any domain $\om\subset\rt$
(or even in $\rN$)
$$\normltom{\na E}^2=\normltom{\rot E}^2+\normltom{\div E}^2$$
holds since $-\Delta=\rot\rot-\na\div$.
In general, this formula is no longer valid if $E$ has just the tangential
or normal boundary condition. 
\end{rem}

With the help of Lemma \ref{french} we can now estimate $\cmrot$.

\begin{lem}
\mylabel{mainlem}
$\cmrot\leq\cp$. More precisely, for all $E$ 
in $\rgaom\cap\rot\rom$ or $\rom\cap\rot\rgaom$
$$\normltom{E}\leq\cp\normltom{\rot E}.$$
Furthermore, $\cmgarotepsid,\cmemptyrotepsid\leq\epso\cp$.
\end{lem}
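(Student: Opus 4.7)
The plan is to reduce the statement to the single inequality $\cmrot\leq\cp$ by proving it in the normal case $\rom\cap\rot\rgaom$, and then invoke the identity $\cmgarot=\cmemptyrot=\cmrot$ from \eqref{cmrotgaeqempty} to transfer the bound to the tangential case $\rgaom\cap\rot\rom$; the weighted estimates will follow directly from Lemma \ref{estrotrotcmaxepsmuest}.

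First I would start with $E\in\rom\cap\rot\rgaom$ and write $E=\rot H$ with $H\in\rgaom$. Then automatically $\div E=0$, and since $H$ has vanishing tangential trace on $\ga$, the normal trace of $E=\rot H$ vanishes as well. Thus $E\in\rom\cap\dgaom$ and Lemma \ref{french} applies, giving $E\in\hoom^{3}$ and
\begin{align*}
\normltom{\na E}^{2}\leq\normltom{\rot E}^{2}+\normltom{\div E}^{2}=\normltom{\rot E}^{2}.
\end{align*}
The crucial next step is to observe that each scalar component $E_{j}$ has vanishing mean: integration by parts against the linear function $x_{j}$ yields
\begin{align*}
\int_{\om}E_{j}\,dx=\int_{\om}E\cdot\na x_{j}\,dx=-\int_{\om}(\div E)\,x_{j}\,dx+\int_{\ga}(\nu\cdot E)\,x_{j}\,d\ga=0,
\end{align*}
since both $\div E=0$ in $\om$ and $\nu\cdot E=0$ on $\ga$. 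Hence each $E_{j}\in\hoemptyom$, and the scalar Poincaré inequality on the convex domain $\om$ gives $\normltom{E_{j}}\leq\cp\normltom{\na E_{j}}$. Summing over $j=1,2,3$ and combining with the previous inequality yields $\normltom{E}\leq\cp\normltom{\na E}\leq\cp\normltom{\rot E}$, so $\cmemptyrot\leq\cp$. By \eqref{cmrotgaeqempty} the same bound holds for $\cmgarot$, whence $\cmrot\leq\cp$.

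The weighted estimates $\cmgarotepsid,\cmemptyrotepsid\leq\epso\cp$ then follow immediately from Lemma \ref{estrotrotcmaxepsmuest}(ii) specialized to full boundary conditions. The main obstacle is really the componentwise mean-zero step, which relies essentially on having \emph{both} $\div E=0$ and $\nu\cdot E=0$. This is precisely why I treat the normal case first: a direct componentwise argument in $\rgaom\cap\rot\rom$ would fail because the normal trace of such an $E$ need not vanish, so the component integrals $\int_{\om}E_{j}\,dx$ need not be zero, and the reduction through \eqref{cmrotgaeqempty} is what bridges this gap cleanly.
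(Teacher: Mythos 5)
Your proof is correct and follows essentially the same route as the paper: reduce to the normal case $E\in\rom\cap\rot\rgaom$ via \eqref{cmrotgaeqempty}, apply Lemma \ref{french}, verify orthogonality to constants, and conclude with Poincar\'e. The only cosmetic difference is that the paper gets the mean-zero property in one line from $\scpLtom{\rot H}{a}=\scpLtom{H}{\rot a}=0$ for constant $a$ using $H\in\rgaom$, whereas you derive it componentwise from $\div E=0$ and the vanishing normal trace of $E$; both are valid.
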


\begin{proof}
By \eqref{cmrotgaeqempty} the boundary condition does not matter.
So, let 
$$E\in\rom\cap\rot\rgaom=\rom\cap\dgazom$$ 
with $E=\rot H$ for some $H\in\rgaom$. 
Then, for any constant vector $a\in\rt$
\begin{align}
\mylabel{orthort}
\scpLtom{E}{a}=\scpLtom{\rot H}{a}=0
\end{align}
holds. Thus, by Poincar\'e's estimate and Lemma \ref{french} we get $E\in\hoom\cap(\rt)^{\bot}$ and
$$\normltom{E}\leq\cp\normltom{\na E}\leq\cp\normltom{\rot E},$$
which shows $\cmrot=\cmemptyrot\leq\cp$.
\end{proof}

We can now formulate the main result for convex domains, which follows immediately
from Corollary \ref{maintheononconvexfullboundary} and Lemma \ref{mainlem}.

\begin{theo}
\mylabel{maintheoconvexfullboundary}
For all $E\in\rgaom\cap\epsmo\dom$ and all $H\in\rom\cap\epsmo\dgaom$
the tangential and normal Maxwell estimates
\begin{align*}
\normltepsom{E}^2
&\leq\epsu^2\cpga^2\normltom{\div\eps E}^2+\epso^2\cp^2\normltom{\rot E}^2,\\
\normltepsom{H}^2
&\leq\epsu^2\cp^2\normltom{\div\eps H}^2+\epso^2\cp^2\normltom{\rot H}^2
\end{align*}
hold. Moreover, 
$$\frac{\cpga}{\epso}
\leq\cmgaeps
\leq\epsh\cp,\quad
\frac{\cp}{\epso}
\leq\cmemptyeps
\leq\epsh\cp.$$
Especially, for $\eps=\id$
$$\max\{\cpga,\cmrot\}=\cmga\leq\cmempty=\cp.$$
\end{theo}

\begin{theo}
\mylabel{maintheocor}
For all $E\in\big(\rgaom\cap\epsmo\dom\big)\cup\big(\rom\cap\epsmo\dgaom\big)$
$$\normltepsom{E}
\leq\epsh\cp\big(\normltom{\div\eps E}^2+\normltom{\rot E}^2\big)^{\nicefrac{1}{2}}.$$
\end{theo}

\begin{acknow}
The author is deeply indebted to Sergey Repin for bringing his attention
to the problem of the Maxwell constants in 3D
and to Sebastian Bauer und Karl-Josef Witsch for so many fruitful and nice discussions.
\end{acknow}

\bibliographystyle{plain} 
\bibliography{/Users/paule/Library/texmf/tex/TeXinput/bibtex/paule}

\appendix
\section{Appendix}

\subsection{More General Operators}
\mylabel{appgenop}

There are obvious generalizations to differential forms.
Let $\om$ be a smooth Riemannian manifold 
of dimension $N\geq2$
with boundary $\ga$ and compact closure.
We assume that the boundary manifold $\ga$ is divided into two
$(N-1)$-dimensional Riemannian sub-manifolds $\gad$ and $\gan$ with boundaries. 
Let us denote by $\ltqom$ the usual Lebesgue (Hilbert) space of $q$-forms.
For the exterior derivative and co-derivative we define the well known Sobolev spaces
$$\dqom:=\set{E\in\ltqom}{\ed E\in\ltqpoom},\quad
\deqom:=\set{E\in\ltqom}{\cd E\in\ltqmoom}.$$
As before, we introduce weak homogeneous boundary conditions 
by closures of respective test forms, yielding the Sobolev spaces
$$\dqgadom,\quad
\deqganom.$$
Let $\A$ be
$$\mumo\ed:\dqgadom\subset\ltqepsom\to\ltqpomuom.$$
Then $\As$ is
$$-\epsmo\cd:\deqpoganom\subset\ltqpomuom\to\ltqepsom,$$
where $\eps$ resp.~$\mu$ are bounded, symmetric, real and uniformly positive definite
linear transformations on $q$- resp.~$(q+1)$-forms.
More precisely:
\begin{center}\begin{tabular}{|c||c|c|c||c|c|}
\hline
$\A$ & $D(\A)$ & $\X$ & $\Y$ & $N(\A)$ & $R(\A)$ \\
\hline
$\mumo\ed$ & $\dqgadom$ & $\ltqepsom$ & $\ltqpomuom$ & $\dqgadzom$ & $\mumo\ed\dqgadom$\\
\hline\hline
$\As$ & $D(\As)$ & $\Y$ & $\X$ & $N(\As)$ & $R(\As)$\\
\hline
$-\epsmo\cd$ & $\deqpoganom$ & $\ltqpomuom$ & $\ltqepsom$ & $\deqpoganzom$ & $\epsmo\cd\deqpoganom$\\
\hline
\end{tabular}\end{center}
Here, 
$$\dqgadzom:=\set{E\in\dqgadom}{\ed E=0},\quad
\deqganzom:=\set{E\in\deqganom}{\cd E=0}$$
and we note
$$R(\A)=\mumo\big(\dqpogadzom\cap\harmqpodnom^{\bot}\big),\quad
R(\As)=\epsmo\big(\deqganzom\cap\harmqdnom^{\bot}\big),$$
where $\harmqdnom:=\dqgadzom\cap\deqganzom$.
Indeed $D(\As)=\deqpoganom$ holds.
We have the same remarks as in Section \ref{secrotations}.
Again, for this one has to show the approximation property
$$\deqpoganom=\set{H\in\deqpoom}{\scpltqom{\cd H}{E}=-\scpltqpoom{H}{\ed E}\,\forall\;E\in\dqgadom},$$
which is not trivial at all for mixed boundary conditions.
And again, only in the special cases of full boundary conditions this is clear.
Since $D(\As)=\deqpoom$ holds for $\gad=\ga$ by definition
we have also $D(\Bs)=D(\A^{**})=D(\A)=\dqgaom$ for $\B=\As$,
which shows the result for $\gad=\emptyset$.
The crucial compact embedding \eqref{compactembA} is
$$\dqgadom\cap\epsmo\ol{\cd\deqpoganom}\hookrightarrow\ltqepsom.$$
Both latter properties of $\om$, i.e., the approximation and the compactness property,
hold, e.g., if the boundary manifolds $\ga$, $\gad$, $\gan$ are Lipschitz
and the boundary manifolds $\gad$, $\gan$ are separated
by a $(N-2)$-dimensional Riemannian and Lipschitz sub-manifold,
the interface $\gamma:=\ol{\gad}\cap\ol{\gan}$,
see \cite{goldshteinmitreairinamariushodgedecomixedbc,jakabmitreairinamariusfinensolhodgedeco}
for details and proofs. We note that
$$\dqgadom\cap\epsmo\ol{\cd\deqpoganom}
\subset\dqgadom\cap\epsmo\deqganzom
\subset\dqgadom\cap\epsmo\deqganom$$
holds and that even the compact embedding of the latter space into $\ltqom$, this is
$$\dqgadom\cap\epsmo\deqganom\hookrightarrow\ltqom\subset\ltqepsom,$$
has been shown in \cite{jakabmitreairinamariusfinensolhodgedeco}\footnote{In
\cite{jakabmitreairinamariusfinensolhodgedeco} it is proved that
$\dqgadom\cap\deqganom$ even embeds continuously to $\Hgen{1/2,q}{}{}(\om)$
and hence compactly to $\ltqom$.
We note that the compactness property is independent of $\eps$,
see e.g.~\cite{kuhndiss}.}.
By Theorem \ref{fatheothree} we have
$$\kappa:=\min_{0\neq E\in\dqgadom\cap\epsmo\cd\deqpoganom}
\frac{\norm{\mumo\ed E}_{\ltqpomuom}}{\norm{E}_{\ltqepsom}}
=\min_{0\neq H\in\deqpoganom\cap\mumo\ed\dqgadom}
\frac{\norm{\epsmo\cd H}_{\ltqepsom}}{\norm{H}_{\ltqpomuom}}$$
and $\kappa^2$ is the first positive 
Dirichlet-Neumann eigenvalue of the weighted reduced $\cd$-$\ed$-operator $-\epsmo\cd\mumo\ed$.
Analogously $\kappa^2$ is also the first positive
Neumann-Dirichlet eigenvalue of the weighted reduced $\ed$-$\cd$-operator $-\mumo\ed\epsmo\cd$.

\begin{lem}
\mylabel{estedcdcmax}
The tangential-normal and normal-tangential generalized Maxwell constants, i.e.,
the best constants in the inequalities
\begin{align*}
\forall\,E&\in\dqgadom\cap\epsmo\cd\deqpoganom&
\norm{E}_{\ltqepsom}&\leq c_{\mathtt{gm},\gad,\ed,\eps,\mu}\norm{\ed E}_{\ltqpomumoom},\\
\forall\,H&\in\deqpoganom\cap\mumo\ed\dqgadom&
\norm{H}_{\ltqpomuom}&\leq c_{\mathtt{gm},\gan,\cd,\mu,\eps}\norm{\cd H}_{\ltqepsmoom},
\end{align*}
coincide and equal to $1/\kappa$, i.e., 
$c_{\mathtt{gm},\gad,\ed,\eps,\mu}=c_{\mathtt{gm},\gan,\cd,\mu,\eps}=\kappa^{-1}$.
\end{lem}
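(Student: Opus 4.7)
The strategy is to recognize this as a direct instance of the abstract framework from Section \ref{fa}, applied to the dual pair
$$\A=\mumo\ed:\dqgadom\subset\ltqepsom\to\ltqpomuom,\qquad \As=-\epsmo\cd:\deqpoganom\subset\ltqpomuom\to\ltqepsom$$
already identified in the table preceding the lemma. All that is required is to check the hypotheses of Lemma \ref{falemone} and Theorem \ref{fatheothree} and then to translate the abstract norms into the weighted norms appearing in the statement.

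First I would note that $\A$ is densely defined and closed and that its adjoint is indeed $\As=-\epsmo\cd$ on $\deqpoganom$; this is precisely the approximation property for $\deqpoganom$ discussed in the paragraph before the lemma (and granted by the cited works \cite{goldshteinmitreairinamariushodgedecomixedbc,jakabmitreairinamariusfinensolhodgedeco} in the mixed case, and trivially in the two full-boundary cases). The crucial compact embedding \eqref{compactembA} takes the form
$$\dqgadom\cap\epsmo\ol{\cd\deqpoganom}\hookrightarrow\ltqepsom$$
and follows from the compact embedding $\dqgadom\cap\epsmo\deqganom\hookrightarrow\ltqom$ established in \cite{jakabmitreairinamariusfinensolhodgedeco}, via the inclusion chain recorded just above the lemma. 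Thus Lemma \ref{falemone} applies, giving in particular closedness of $R(\A)$ and $R(\As)$, so $D(\A)\cap R(\As)=\dqgadom\cap\epsmo\cd\deqpoganom$ and $D(\As)\cap R(\A)=\deqpoganom\cap\mumo\ed\dqgadom$.

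Next I would translate norms. For $E\in D(\A)$ a direct computation gives
$$\norm{\A E}_{\Y}^2=\scp{\mu\,\mumo\ed E}{\mumo\ed E}_{\ltqpoom}=\scp{\ed E}{\mumo\ed E}_{\ltqpoom}=\norm{\ed E}_{\ltqpomumoom}^2,$$
and symmetrically $\norm{\As H}_{\X}=\norm{\cd H}_{\ltqepsmoom}$ for $H\in D(\As)$ (the sign in $\As=-\epsmo\cd$ is irrelevant for the norm). Applying Theorem \ref{fatheothree} then yields
$$\frac{1}{c_{\mathtt{gm},\gad,\ed,\eps,\mu}}=\min_{0\neq E\in\dqgadom\cap\epsmo\cd\deqpoganom}\frac{\norm{\ed E}_{\ltqpomumoom}}{\norm{E}_{\ltqepsom}}=\min_{0\neq H\in\deqpoganom\cap\mumo\ed\dqgadom}\frac{\norm{\cd H}_{\ltqepsmoom}}{\norm{H}_{\ltqpomuom}}=\frac{1}{c_{\mathtt{gm},\gan,\cd,\mu,\eps}},$$
and by definition this common value is $\kappa$, proving $c_{\mathtt{gm},\gad,\ed,\eps,\mu}=c_{\mathtt{gm},\gan,\cd,\mu,\eps}=\kappa^{-1}$.

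There is essentially no genuine obstacle here: the work was already done when verifying the abstract setting, checking $D(\As)=\deqpoganom$, and invoking the Hodge-type compactness result. The only mildly delicate point is keeping the weighted inner products straight so that the two factors of $\eps^{\pm 1}$ and $\mu^{\pm 1}$ combine correctly to produce the $\mumo$- and $\epsmo$-weighted norms on $\ed E$ and $\cd H$ stated in the lemma.
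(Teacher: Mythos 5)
Your proposal is correct and follows exactly the route the paper takes: the lemma is stated there as an immediate consequence of Theorem \ref{fatheothree} applied to the dual pair $\A=\mumo\ed$, $\As=-\epsmo\cd$, with the adjointness and compact-embedding hypotheses supplied by the cited Hodge-decomposition results, and with the same norm identities $\norm{\mumo\ed E}_{\ltqpomuom}=\norm{\ed E}_{\ltqpomumoom}$ and $\norm{\epsmo\cd H}_{\ltqepsom}=\norm{\cd H}_{\ltqepsmoom}$ that you verify.
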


\begin{rem}
\mylabel{remdiffformsapp}
It is clear that more results of this contribution
can be generalized to the differential form setting.
\end{rem}

\subsection{Maxwell Tools}

Let the general assumptions from the introduction be satisfied.

\subsubsection{The Maxwell Estimates}
\mylabel{appmaxest}

By the Maxwell compactness property we get immediately the Maxwell estimate.

\begin{lem}
\mylabel{lemmaxest}
There exists $\cmgadeps>0$, such that
for all $E$ in $\rgadom\cap\epsmo\dganom\cap\harmdnepsom^{\bot_{\eps}}$ 
$$\normltepsom{E}
\leq\cmgadeps\big(\normltom{\rot E}^2+\normltom{\div\eps E}^2\big)^{\nicefrac{1}{2}}.$$
\end{lem}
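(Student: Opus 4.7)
The plan is to prove Lemma \ref{lemmaxest} by a standard indirect argument, exploiting the Maxwell compactness property \eqref{introrotdivcomp} exactly as Rellich's selection theorem is used to derive Poincar\'e's inequality. The main steps run as follows.

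First I would argue by contradiction. Suppose no such constant $\cmgadeps > 0$ exists. Then for every $n \in \nz$ one finds $E_n \in \rgadom \cap \epsmo\dganom \cap \harmdnepsom^{\bot_{\eps}}$ with $\normltepsom{E_n} = 1$ and $\normltom{\rot E_n}^2 + \normltom{\div \eps E_n}^2 \leq 1/n$. In particular the sequence $(E_n)$ is bounded in $\rgadom \cap \epsmo\dganom$, since $\normltom{E_n}$ is controlled by $\epsu \normltepsom{E_n}$ and the $\rot$- and $\div\eps$-parts tend to zero.

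Next I would invoke the Maxwell compactness property \eqref{introrotdivcomp}, which gives a subsequence (still denoted $E_n$) converging in $\ltom$, hence in $\ltepsom$, to some $E$. Because $\rot E_n \to 0$ and $\div\eps E_n \to 0$ in $\ltom$, and the operators $\rot$ and $\div\eps$ are closed (they are distributional differential operators), one obtains $E \in \rgadom \cap \epsmo\dganom$ together with $\rot E = 0$ and $\div \eps E = 0$. Therefore $E \in \rgadzom \cap \epsmo\dganzom = \harmdnepsom$.

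On the other hand, since $\harmdnepsom^{\bot_{\eps}}$ is closed in $\ltepsom$ and each $E_n$ lies in it, the limit $E$ also lies in $\harmdnepsom^{\bot_{\eps}}$. Combining both inclusions forces $E = 0$, which contradicts $\normltepsom{E} = \lim_n \normltepsom{E_n} = 1$. The obstacle-free part of the argument is thus the contradiction itself; the only non-trivial ingredient is the Maxwell compactness property, which is available by assumption on $\om$, $\gad$, $\gan$. Once the existence of $\cmgadeps$ is established, the equivalent form \eqref{intro-estrotdiv} for arbitrary $E \in \rgadom \cap \epsmo\dganom$ follows by replacing $E$ with $E - \pidn E$ and noting that $\rot$ and $\div\eps$ annihilate $\pidn E \in \harmdnepsom$.
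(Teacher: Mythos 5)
Your proposal is correct and follows essentially the same contradiction argument as the paper: a normalized sequence with vanishing $\rot$ and $\div\eps$, the Maxwell compactness property to extract an $\ltepsom$-convergent subsequence, closedness (of the operators and of the subspaces $\rgadom$, $\dganom$ encoding the boundary conditions) to place the limit in $\harmdnepsom\cap\harmdnepsom^{\bot_{\eps}}=\{0\}$, contradicting the normalization. The concluding remark on passing to general $E$ via $E-\pidn E$ matches the paper's Corollary \ref{lemmaxestcor}.
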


\begin{proof}
If the estimate would not hold, there would exist a sequence of vector fields
$(E_{n})\subset\rgadom\cap\epsmo\dganom\cap\harmdnepsom^{\bot_{\eps}}$
with $\normltepsom{E_{n}}=1$ and
$$\normltom{\rot E_{n}}+\normltom{\div\eps E_{n}}<\frac{1}{n}.$$
By the Maxwell compactness property we can assume w.l.o.g. that
$(E_{n})$ converges in $\ltepsom$ to some $E\in\ltepsom$.
By testing, $E$ belongs to $\rzom\cap\epsmo\dzom\cap\harmdnepsom^{\bot_{\eps}}$
and $(E_{n})$ converges to $E$ also in $\rom\cap\epsmo\dom$.
As $\rgadom$ resp.~$\dganom$ is a closed subspace of $\rom$ resp.~$\dom$, 
$E$ belongs even to $\rgadzom\cap\epsmo\dganzom=\harmdnepsom$.
Hence, $E=0$, which contradicts $1=\normltom{E_{n}}\to0$.
\end{proof}

\begin{cor}
\mylabel{lemmaxestcor}
For all $E$ in $\rgadom\cap\epsmo\dganom$ 
$$\normltepsom{(1-\pidn)E}
\leq\cmgadeps\big(\normltom{\rot E}^2+\normltom{\div\eps E}^2\big)^{\nicefrac{1}{2}}.$$
\end{cor}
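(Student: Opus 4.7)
The plan is to reduce the claim immediately to Lemma \ref{lemmaxest} by substituting $(1-\pidn)E$ for the test field. The entire content of the corollary is that the projection $\pidn E$ contributes nothing to either side of the Maxwell estimate and lives in the right space, so the estimate for fields orthogonal to $\harmdnepsom$ upgrades automatically to all fields in $\rgadom\cap\epsmo\dganom$.

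First I would verify that $(1-\pidn)E$ lies in the domain of Lemma \ref{lemmaxest}, namely $\rgadom\cap\epsmo\dganom\cap\harmdnepsom^{\bot_{\eps}}$. Given $E\in\rgadom\cap\epsmo\dganom$ and the inclusion
\[
\harmdnepsom=\rgadzom\cap\epsmo\dganzom\subset\rgadom\cap\epsmo\dganom,
\]
the projected piece $\pidn E$ lies in $\rgadom\cap\epsmo\dganom$, so by linearity $(1-\pidn)E$ does too. The orthogonality $(1-\pidn)E\perp_{\eps}\harmdnepsom$ is immediate from the definition of the $\ltepsom$-orthogonal projector $\pidn:\ltepsom\to\harmdnepsom$.

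Next, since $\pidn E\in\harmdnepsom$ satisfies $\rot(\pidn E)=0$ and $\div\eps(\pidn E)=0$, we get the identities
\[
\rot\bigl((1-\pidn)E\bigr)=\rot E,\qquad \div\eps\bigl((1-\pidn)E\bigr)=\div\eps E.
\]
Applying Lemma \ref{lemmaxest} to $(1-\pidn)E$ then yields
\[
\normltepsom{(1-\pidn)E}\leq\cmgadeps\bigl(\normltom{\rot E}^2+\normltom{\div\eps E}^2\bigr)^{\nicefrac{1}{2}},
\]
which is exactly the asserted inequality. There is no real obstacle here: the entire argument is a one-line substitution, and the only thing to check carefully is that the harmonic fields are themselves tangential and $\eps$-normal (so that $(1-\pidn)E$ remains in the mixed-boundary space), which is built into the definition $\harmdnepsom:=\rgadzom\cap\epsmo\dganzom$.
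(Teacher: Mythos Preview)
Your proof is correct and follows exactly the same approach as the paper: define $H:=(1-\pidn)E$, check that $H\in\rgadom\cap\epsmo\dganom\cap\harmdnepsom^{\bot_{\eps}}$ with $\rot H=\rot E$ and $\div\eps H=\div\eps E$, and apply Lemma~\ref{lemmaxest}. The paper's proof is simply a more compressed version of what you wrote.
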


\begin{proof}
As $H:=(1-\pidn)E\in\rgadom\cap\epsmo\dganom\cap\harmdnepsom^{\bot_{\eps}}$
with $\div\eps H=\div\eps E$ and $\rot H=\rot E$, 
Lemma \ref{lemmaxest} completes the proof.
\end{proof}

The same arguments show that the Maxwell estimate remains valid in any dimension 
and even for compact Riemannian manifolds, as long as the crucial Maxwell compactness property holds.

\subsubsection{Helmholtz-Weyl Decompositions}
\mylabel{appmaxhelm}

By the projection theorem we have for the operator $\na$
$$\ltepsom=\ol{\na\hogadom}\oplus_{\eps}\epsmo\dganzom,$$
where indeed $\big(\na\hogadom\big)^{\bot}=\dganzom$
holds by \cite{jochmanncompembmaxmixbc}.
Note that $\na\hogadom$ is already closed by Rellich's selection theorem.
Analogously, we obtain for the operator $\rot$
\begin{align}
\mylabel{helmrot}
\ltepsom=\rgadzom\oplus_{\eps}\epsmo\ol{\rot\rganom},
\end{align}
where again and indeed $\big(\rot\rganom\big)^{\bot}=\rgadzom$
holds by \cite{jochmanncompembmaxmixbc}.
For $\eps=\id$ we get by \eqref{helmrot}
$$\rgadom=\rgadzom\oplus\big(\rgadom\cap\ol{\rot\rganom}\big)$$
and therefore
$$\rot\rgadom=\rot\big(\rgadom\cap\ol{\rot\rganom}\big).$$
As $\ol{\rot\rganom}\subset\dganzom\cap\harmdnom^{\bot}$,
the Maxwell estimate Lemma \ref{lemmaxest}
implies that also $\rot\rgadom$ is already closed. Moreover,
$$\rot\rgadom=\rot\rcalgadom,\quad
\rcalgadom:=\rgadom\cap\rot\rganom=\rgadom\cap\rot\rcalganom.$$
Since $\na\hogadom\subset\rgadzom$ and $\rot\rganom\subset\dganzom$ we obtain
\begin{align*}
\rgadzom
&=\na\hogadom\oplus_{\eps}\big(\underbrace{\rgadzom\cap\epsmo\dganzom}_{\hspace*{5em}=\harmdnepsom}\big),\\
\epsmo\dganzom
&=\epsmo\rot\rganom\oplus_{\eps}\big(\overbrace{\rgadzom\cap\epsmo\dganzom}\big).
\end{align*}
Finally, we have the well known Helmholtz decompositions:

\begin{lem}
\mylabel{hemldeco}
It holds
\begin{align*}
\ltepsom
&=\na\hogadom\oplus_{\eps}\epsmo\dganzom=\rgadzom\oplus_{\eps}\epsmo\rot\rcalganom\\
&=\na\hogadom\oplus_{\eps}\harmdnepsom\oplus_{\eps}\epsmo\rot\rcalganom
\end{align*}
as well as
$$\na\hogadom=\rgadzom\cap\harmdnepsom^{\bot_{\eps}},\quad
\epsmo\rot\rcalganom=\epsmo\dganzom\cap\harmdnepsom^{\bot_{\eps}}$$
and $\rcalgadom=\rgadom\cap\dganzom\cap\harmdnom^{\bot}$.
\end{lem}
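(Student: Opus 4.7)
The plan is to derive both two-term decompositions from the projection theorem applied to $\na$ and $\rot$, combine them by intersecting with $\rgadzom$ and $\epsmo\dganzom$ to isolate the harmonic summand, and then read off the identification of $\rcalgadom$ from the $\eps=\id$ specialization.

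First, I would establish $\ltepsom=\na\hogadom\oplus_{\eps}\epsmo\dganzom$. Rellich's selection theorem combined with \eqref{introestgrad} gives that $\na:\hogadom\to\ltepsom$ has closed range, so $\na\hogadom$ is closed in $\ltepsom$. An element $E\in\ltepsom$ is $\eps$-orthogonal to $\na\hogadom$ precisely when $\scpltom{\eps E}{\na u}=0$ for all $u\in\hogadom$, which by the approximation property for $\dganom$ cited from \cite{jochmanncompembmaxmixbc} and reproduced in the excerpt is equivalent to $\eps E\in\dganzom$, i.e.\ $E\in\epsmo\dganzom$. The projection theorem yields the decomposition.

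Next I would establish $\ltepsom=\rgadzom\oplus_{\eps}\epsmo\rot\rcalganom$ by the same template. The discussion preceding the lemma already records that $\rot\rcalganom=\rot\rganom$ and that this range is closed in $\ltom$ by the Maxwell estimate (Lemma~\ref{lemmaxest}) applied on $\rcalganom$; hence $\epsmo\rot\rcalganom$ is closed in $\ltepsom$. An $E\in\ltepsom$ is $\eps$-orthogonal to $\epsmo\rot\rcalganom$ iff $\scpltom{E}{\rot H}=0$ for all $H\in\rganom$, which by the approximation property for $\rganom$ (again from \cite{jochmanncompembmaxmixbc}) characterizes $(\rot\rganom)^{\bot}=\rgadzom$. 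The projection theorem again produces the decomposition.

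To combine, I would intersect the two decompositions. Since $\na\hogadom\subset\rgadzom$, every $E\in\rgadzom$ splits as $E=\na u+F$ with $F\in\epsmo\dganzom$ from the first decomposition, and then $F=E-\na u\in\rgadzom\cap\epsmo\dganzom=\harmdnepsom$. This yields $\rgadzom=\na\hogadom\oplus_{\eps}\harmdnepsom$, and analogously $\epsmo\dganzom=\epsmo\rot\rcalganom\oplus_{\eps}\harmdnepsom$. Substituting either back into the corresponding two-term decomposition produces the three-term decomposition, together with the orthogonal-complement identities $\na\hogadom=\rgadzom\cap\harmdnepsom^{\bot_{\eps}}$ and $\epsmo\rot\rcalganom=\epsmo\dganzom\cap\harmdnepsom^{\bot_{\eps}}$. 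Specializing the latter to $\eps=\id$ gives $\rot\rcalganom=\dganzom\cap\harmdnom^{\bot}$, and intersecting with $\rgadom$ while using $\rcalgadom=\rgadom\cap\rot\rcalganom$ (noted just above the lemma) delivers the final identity $\rcalgadom=\rgadom\cap\dganzom\cap\harmdnom^{\bot}$. The genuinely nontrivial ingredients are Jochmann's approximation properties identifying the orthogonal complements of $\na\hogadom$ and $\rot\rganom$ under mixed boundary conditions, together with the Maxwell compactness used to close $\rot\rcalganom$; both are taken as given in the surrounding text, after which the proof is routine Hilbert-space bookkeeping.
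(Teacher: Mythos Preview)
Your proposal is correct and follows essentially the same route as the paper: the projection theorem applied to $\na$ and $\rot$ (with Jochmann's approximation properties identifying the orthogonal complements and Rellich resp.\ the Maxwell estimate providing closedness of the ranges), followed by intersecting with $\rgadzom$ and $\epsmo\dganzom$ to split off the harmonic fields, and finally reading off the description of $\rcalgadom$ from the $\eps=\id$ case. The only cosmetic difference is that the paper first shows closedness of $\rot\rgadom$ (with $\gad$ and $\gan$ interchanged this gives your closedness of $\rot\rcalganom=\rot\rganom$) before introducing $\rcalgadom$, whereas you invoke the Maxwell estimate directly on $\rcalganom$; both are equivalent.
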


\subsection{Functional Analytical Tools}
\mylabel{appfa}

Let us recall that for a self-adjoint operator 
$\T:D(\T)\subset\H\to\H$, where $\H$ denotes some Hilbert space,
$$\cn\setminus\rz\subset\resol(\T),\quad
\spec(\T)=\specp(\T)\cup\specc(\T)\subset\rz,\quad
\specr(\T)=\emptyset$$
hold. Here, $\resol(\T)$, $\spec(\T)$, $\specp(\T)$, $\specc(\T)$, $\specr(\T)$
denote the resolvent set, the spectrum, the point spectrum, 
the continuous spectrum and the residual spectrum, respectively.
Moreover, we have the `Helmholtz' decompositions
$$\H=N(\T-\bar{\lambda})\oplus\ol{R(\T-\lambda)}.$$
For $\lambda\in\resol(\T)$ the continuity of $(\T-\lambda)^{-1}$ is equivalent to
$$\exists\,c>0\quad
\forall\,u\in D(\T)\quad
\norm{u}_{\H}\leq c\norm{(\T-\lambda)u}_{\H}.$$
Hence, as $\T$ is closed, $R(\T-\lambda)=\H$ holds for $\lambda\in\resol(\T)$,
see e.g.~\cite[VIII.1, Theorem]{yosidabook}.
Thus the resolvent set $\resol(\T)$, i.e.,
the set of all $\lambda\in\cn$ with
$N(\T-\lambda)=\{0\}$, $\ol{R(\T-\lambda)}=\H$ and
$(\T-\lambda)^{-1}:R(\T-\lambda)\to D(\T-\lambda)$ bounded,
is just given by
$$\resol(\T)
=\set{\lambda\in\cn}{(\T-\lambda)^{-1}:\H\to D(\T)\text{ bounded}}.$$
We note that for all $\lambda\in\cn$ the norms in 
$D(\T-\lambda)$ and $D(\T)$ are equivalent.

%


We give simple proofs of the results of section \ref{fa}.
For this, we recall the Hilbert spaces $\X$ and $\Y$ and 
the closed and densely defined linear operator $\A:D(\A)\subset\X\mapsto\Y$
with adjoint $\As:D(\As)\subset\Y\mapsto\X$.
$\As\A:D(\As\A)\subset\X\mapsto\X$ and $\A\As:D(\A\As)\subset\Y\mapsto\Y$
are self-adjoint and non-negative.
Furthermore, we introduce the Maxwell-type operator
$$\M:D(\M)\subset\Z\to\Z,\quad
D(\M):=D(\A)\times D(\As),\quad
\Z:=\X\times\Y$$
by $\M(x,y)=(\As y,\A x)$ and note that
$$\M=\zmat{0}{\As}{\A}{0},\quad
\M^2=\zmat{\As\A}{0}{0}{\A\As}$$
are self-adjoint as well and $\M^2$ is non-negative.
Moreover, we introduce two projections 
$\pi_{\X}:\Z\to\X$ and $\pi_{\Y}:\Z\to\Y$
by $\pi_{\X}z:=x$ and $\pi_{\Y}z:=y$ for $z=(x,y)$
and two embeddings
$\iota_{\X}:\X\to\Z$ and $\iota_{\Y}:\Y\to\Z$
by $\iota_{\X}x:=(x,0)$ and $\iota_{\Y}y:=(0,y)$.

First we show a stronger version of \eqref{spectrumAAs}.

\begin{lem}
\mylabel{spectralem}
It holds
\begin{itemize}
\item[\bf(i)] 
$0\in\spec(\M)\equi0\in\spec(\As\A)\cup\spec(\A\As)$,
\item[\bf(i')] 
$0\in\specc(\M)\equi0\in\specc(\As\A)\equi0\in\specc(\A\As)$,
\item[\bf(i'')] 
$0\in\specp(\M)\equi0\in\specp(\As\A)\cup\specp(\A\As)$
\end{itemize}
and for $\lambda\in\rz\setminus\{0\}$
\begin{itemize}
\item[\bf(ii)] 
$\lambda\in\resol(\M)\equi\lambda^2\in\resol(\As\A)\equi\lambda^2\in\resol(\A\As)$,
\item[\bf(ii')] 
$\lambda\in\spec(\M)\equi\lambda^2\in\spec(\As\A)\equi\lambda^2\in\spec(\A\As)$,
\item[\bf(iii)] 
$\lambda\in\specc(\M)\equi\lambda^2\in\specc(\As\A)\equi\lambda^2\in\specc(\A\As)$,
\item[\bf(iv)] 
$\lambda\in\specp(\M)\equi\lambda^2\in\specp(\As\A)\equi\lambda^2\in\specp(\A\As)$.
More precisely:
If $z:=(x,y)$ is an eigenvector to the eigenvalue $\lambda$ of $\M$, then
$x$ is an eigenvector to the eigenvalue $\lambda^2$ of $\As\A$ 
and $y$ is an eigenvector to the eigenvalue $\lambda^2$ of $\A\As$.
If $x$ is an eigenvector to the eigenvalue $\lambda^2$ of $\As\A$, then 
$z_{\pm}:=(x,\pm\lambda^{-1}\A x)$ is an eigenvector to the eigenvalue $\pm\lambda$ of $\M$, respectively.
If $y$ is an eigenvector to the eigenvalue $\lambda^2$ of $\A\As$, then 
$z_{\pm}:=(\pm\lambda^{-1}\As y,y)$ is an eigenvector to the eigenvalue $\pm\lambda$ of $\M$, respectively.
\end{itemize}
Therefore,
\begin{itemize}
\item[\bf(v)] 
$\resol(\M)$ and $\spec(\M)$, $\specc(\M)$, $\specp(\M)$ are point symmetric to the origin.
\end{itemize}
\end{lem}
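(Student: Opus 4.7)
The plan is to build everything on two structural identities. First, the unitary $U := \iota_\X\pi_\X - \iota_\Y\pi_\Y$ on $\Z$ (i.e.\ $U(x,y) = (x,-y)$) satisfies $U\M = -\M U$, so conjugation by $U$ maps $\M$ to $-\M$ and preserves each of $\resol$, $\spec$, $\specp$, $\specc$ while sending $\lambda \mapsto -\lambda$; this yields (v) immediately. Second, on $D(\M^2) = D(\As\A) \times D(\A\As)$ one has the block-diagonal identity $\M^2 = \iota_\X\As\A\,\pi_\X + \iota_\Y\A\As\,\pi_\Y$, from which $\resol(\M^2) = \resol(\As\A) \cap \resol(\A\As)$ and $\spec(\M^2) = \spec(\As\A) \cup \spec(\A\As)$ (analogously for $\specp$ and $\specc$) by the standard block-diagonal calculus for self-adjoint operators.

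For $\lambda \in \rz\setminus\{0\}$ the bridge between $\M$ and $\M^2$ is the factorization $(\M-\lambda)(\M+\lambda) = \M^2 - \lambda^2$ valid on $D(\M^2)$. If $\lambda \in \resol(\M)$, then by (v) also $-\lambda \in \resol(\M)$, and $(\M+\lambda)^{-1}(\M-\lambda)^{-1}$ is a bounded inverse of $\M^2-\lambda^2$. Conversely, given $\lambda^2 \in \resol(\M^2)$, I would set $R_\lambda := (\M+\lambda)(\M^2-\lambda^2)^{-1}\colon\Z\to\Z$; its boundedness follows from $\normz{\M w}^2 = \scpz{\M^2 w}{w} = \scpz{z + \lambda^2 w}{w}$ applied to $w := (\M^2-\lambda^2)^{-1}z \in D(\M^2)$. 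Then $(\M-\lambda)R_\lambda = I_\Z$, while any $z \in N(\M-\lambda)$ automatically lies in $D(\M^2)$ (since $\M z = \lambda z \in D(\M)$) with $(\M^2-\lambda^2)z = 0$, hence $z=0$; closedness of $\M-\lambda$ gives $R_\lambda = (\M-\lambda)^{-1}$ bounded, so $\lambda \in \resol(\M)$. For (iv), unpacking $\M(x,y) = \lambda(x,y)$ into $\A x = \lambda y$ and $\As y = \lambda x$ is direct, and the converse construction $x \mapsto (x, \pm\lambda^{-1}\A x)$ (well defined because $x \in D(\As\A)$ forces $\A x \in D(\As)$) produces a bijection identifying $\specp(\As\A)\setminus\{0\}$ with $\specp(\A\As)\setminus\{0\}$. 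To upgrade these into the full three-way form of (ii), (ii'), (iii), one extends the nonzero spectral coincidence from $\specp$ to all of $\spec$ via the polar decomposition $\A = V|\A|$ with partial isometry $V\colon\ol{R(\As)}\to\ol{R(\A)}$, which conjugates $\As\A$ to $\A\As$ on the respective non-kernel parts. Finally (ii') is the complement of (ii), and (iii) follows from $\specc = \spec\setminus\specp$, valid since $\specr = \emptyset$ for self-adjoint operators.

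For $\lambda = 0$, the definition of $\M$ gives $N(\M) = N(\A) \times N(\As)$, and $\normx{\A x}^2 = \scpx{\As\A x}{x}$ yields $N(\As\A) = N(\A)$ and symmetrically $N(\A\As) = N(\As)$, proving (i''). For (i), $0 \in \resol(\M)$ is equivalent to bounded invertibility of $\M$, in turn equivalent to bounded invertibility of $\M^2$ (forward by squaring; backward from $N(\M) \subset N(\M^2) = \{0\}$, $R(\M) \supset R(\M^2) = \Z$, and the closed graph theorem for the closed operator $\M$), hence to bounded invertibility of both $\As\A$ and $\A\As$. Item (i') is the most delicate, combining $\specc = \spec\setminus\specp$ with (i), (i'') and the closed range theorem: being in $\specc$ of any of the three operators forces $N(\A) = \{0\}$ (respectively $N(\As) = \{0\}$), under which closedness of $R(\As\A)$ is equivalent to closedness of $R(\As)$, hence (via the closed range theorem) to closedness of $R(\A)$ and of $R(\A\As)$, so that failure of closed range transports consistently between $\M$, $\As\A$, $\A\As$. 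The step I expect to be the main obstacle is precisely this last transport of non-closedness for the continuous spectrum at $\lambda = 0$, where the dichotomy \emph{closed versus dense-but-not-closed} of $R(\As\A)$ must be traced correctly through the block structure via the closed range theorem; everything else reduces to clean bookkeeping around (v) and the block-diagonal factorization of $\M^2$.
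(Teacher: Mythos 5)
Your treatment of (i), (i''), (ii), (ii'), (iii) for $\lambda\neq0$, (iv) and (v) is correct, but it takes a genuinely different route from the paper: you obtain (v) first from the anticommuting unitary $(x,y)\mapsto(x,-y)$, reduce everything to the block-diagonal operator $\M^2$ via the factorization $(\M-\lambda)(\M+\lambda)=\M^2-\lambda^2$ on $D(\M^2)=D(\As\A)\times D(\A\As)$, and close the three-way equivalences by invoking the polar decomposition $\A=V|\A|$ to identify $\spec(\As\A)\setminus\{0\}$ with $\spec(\A\As)\setminus\{0\}$. The paper instead proves (ii) by explicitly constructing $(\As\A-\lambda^2)^{-1}=\lambda^{-1}\pi_{\X}(\M-\lambda)^{-1}\iota_{\X}$ in one direction and, in the other, building $(\M-\lambda)^{-1}$ from $(\As\A-\lambda^2)^{-1}$ \emph{alone} by means of a Lax--Milgram auxiliary problem; it therefore never needs the polar decomposition or the spectral theorem, and it gets the three-way form of (ii) directly because only one of the two resolvents is assumed. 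Both routes are sound there; yours is shorter but imports heavier machinery.

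The genuine gap is in (i'). You work with the convention $\specc=\spec\setminus\specp$ and transport non-closedness of ranges between $\M$, $\As\A$ and $\A\As$. The closed-range transport is fine, but the point-spectrum \emph{exclusion} does not transport: $N(\As\A)=N(\A)$ and $N(\A\As)=N(\As)$ need not vanish simultaneously --- which is exactly why (i'') carries a union rather than two separate equivalences. Concretely, take $\X=\ell^2$, $\Y=\ell^2\oplus\cn$, $\A x=(Tx,0)$ with $T=\mathrm{diag}(1/n)$: then $N(\A)=\{0\}$ and $R(\As\A)=R(T^2)$ is dense and non-closed, so $0\in\specc(\As\A)$ under your convention, while $N(\A\As)=N(\As)=\{0\}\oplus\cn\neq\{0\}$ places $0$ in $\specp(\A\As)$ and hence outside $\specc(\A\As)$ (and likewise $0\in\specp(\M)$, so $0\notin\specc(\M)$). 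Under the disjoint convention the claimed equivalence is therefore false, and no refinement of the closed-range bookkeeping --- the step you flagged as the main obstacle --- can save it. The paper proves (i') with a different characterization, namely $0\in\specc(\T)$ iff $0$ is an accumulation point of $\spec(\T)\setminus\{0\}$, which reduces (i') to the already proven coincidence (ii') of the non-zero spectra and never touches the kernels; to make your argument work you must adopt that characterization of $\specc$ at $0$ rather than $\specc=\spec\setminus\specp$.
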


\begin{proof}
As (i')$\wedge$(i'')$\impl$(i), (ii)$\impl$(ii') and (ii')$\wedge$(iv)$\impl$(iii), 
we only have to show (i'), (i''), (ii) and (iv). Then (v) is clear.

\ul{(ii):} 
We just show the assertions for $\As\A$. 
The corresponding results for $\A\As$ can be proven analogously.

$\Rightarrow:$ 
Let $\lambda\in\resol(\M)$, i.e.,
$N(\M-\lambda)=\{0\}$ and
$(\M-\lambda)^{-1}:\Z\to D(\M)$ is continuous. 

$\bullet$ 
First we show $N(\As\A-\lambda^2)=\{0\}$.
Let $x\in N(\As\A-\lambda^2)$.
Then $z:=(x,y)\in D(\M)$ with $y:=\lambda^{-1}\A x\in D(\As)$
belongs to $N(\M-\lambda)$ since
$(\M-\lambda)z=(\As y-\lambda x,\A x-\lambda y)=0$.
Hence $z=0$, especially $x=0$.

$\bullet$ 
Let $f\in\X$. 
We want to solve $(\As\A-\lambda^2)x=f$ with $x\in D(\As\A)$.
Defining the `dual variable' $y:=\lambda^{-1}\A x\in D(\As)$
and $z:=(x,y)\in D(\M)$, the mixed formulation of this problem is 
$$\lambda(\As y-\lambda x)=f,\quad
\A x=\lambda y\quad
\equi\quad
(\M-\lambda)z=(\As y-\lambda x,\A x-\lambda y)=\lambda^{-1}(f,0).$$
These heuristic considerations suggest to set $x:=\pi_{\X}z\in D(\A)$
and $y:=\pi_{\Y}z\in D(\As)$ with 
$z:=\lambda^{-1}(\M-\lambda)^{-1}\iota_{\X}f\in D(\M)$. Then
$(\As y-\lambda x,\A x-\lambda y)
=(\M-\lambda)z
=\lambda^{-1}(f,0)$,
i.e., $\A x=\lambda y\in D(\As)$ and $(\As\A-\lambda^2)x=f$.
Moreover, $x$ depends continuously on $f$ since
$$\norm{x}_{D(\As\A)}
\leq\underbrace{\normx{x}}_{\leq\normz{z}}
+\underbrace{\normx{\As\A x}}_{=\normx{f}+\lambda^2\normx{x}}
\leq c\big(\normz{z}+\normx{f}\big)
\leq c\normx{f}.$$
Therefore, 
$(\As\A-\lambda^2)^{-1}
=\lambda^{-1}\pi_{\X}(\M-\lambda)^{-1}\iota_{\X}
:\X\to D(\As\A)$
is continuous and thus $\lambda^2\in\resol(\As\A)$. 


$\Leftarrow$: 
Let $\lambda^2\in\resol(\As\A)$, i.e., $N(\As\A-\lambda^2)=\{0\}$ and
$(\As\A-\lambda^2)^{-1}:\X\to D(\As\A)$
is continuous. 

$\bullet$ 
First we show $N(\M-\lambda)=\{0\}$. 
Let $z=(x,y)\in N(\M-\lambda)$. As 
$$(\As y-\lambda x,\A x-\lambda y)=(\M-\lambda)z=0,$$
$\A x=\lambda y\in D(\As)$ with $\As\A x=\lambda\As y=\lambda^2x$.
Hence, $x\in N(\As\A-\lambda^2)$ yields $x=0$ and $y=0$, i.e., $z=0$. 

$\bullet$ 
Let $h=(f,g)\in\Z$.
We want to solve $(\M-\lambda)z=h$ with $(x,y)=z\in D(\M)$.
As $(\As y-\lambda x,\A x-\lambda y)=(f,g)$,
$y\in D(\As)$ is already given by the second equation $\lambda y=\A x-g$,
if $x$ is known. Hence, rewriting everything in terms of $x$, this is
$$(f,g)
=\big(\lambda^{-1}\As(\A x-g)-\lambda x,\A x-(\A x-g)\big)
=\big(\lambda^{-1}\As(\A x-g)-\lambda x,g\big),$$
we see that we need to solve $\As(\A x-g)-\lambda^2x=\lambda f$.
Since $g$ does not belong to $D(\As)$ in general,
we cannot apply $(\As\A-\lambda^2)^{-1}$ directly.
The ansatz $x=\xt+\xh\in D(\A)$ with $\xh\in D(\As\A)$
leads to
\begin{align}
\mylabel{eqansatz}
\As(\A\xt-g)-\lambda^2\xt+(\As\A-\lambda^2)\xh
=\lambda f.
\end{align}
By the Lax-Milgram lemma we can solve, e.g., $\As(\A\xt-g)+\xt=\lambda f$.
More precisely, there exists a unique $\xt\in D(\A)$ with
\begin{align}
\mylabel{laxmilgram}
\forall\,\varphi\in D(\A)\quad
\scpy{\A \xt}{\A\varphi}+\scpx{\xt}{\varphi}
=\lambda\scpx{f}{\varphi}+\scpy{g}{\A\varphi}
\end{align}
depending continuously on $f$ and $g$ and hence on $h$, i.e., 
$\norm{\xt}_{{D(\A)}}\leq|\lambda|\normx{f}+\normy{g}\leq c\normz{h}$.
Let us denote this bounded linear operator mapping $h$ to $\xt$ 
by $\L:\Z\to D(\A)$. Now, \eqref{eqansatz} turns to
$$(\As\A-\lambda^2)\xh
=(1+\lambda^2)\xt.$$
The latter heuristic computations suggest to define $z:=(x,y)$ by
$$x:=\xt+(1+\lambda^2)(\As\A-\lambda^2)^{-1}\xt\in D(\A),\quad
y:=\lambda^{-1}(\A x-g)$$
with $\xt$ from \eqref{laxmilgram}.
$\xt\in D(\A)$ is uniquely defined and depends continuously on $h$, i.e.,
$\norm{\xt}_{{D(\A)}}\leq c\normz{h}$. Moreover, 
$\A\xt-g\in D(\As)$ and $\As(\A\xt-g)=\lambda f-\xt$
by \eqref{laxmilgram}. As $x-\xt\in D(\As\A)$, we get
$y=\lambda^{-1}\big(\A(x-\xt)+\A\xt-g\big)\in D(\As)$.
Thus, $z$ belongs to $D(\M)$. Since
$$\lambda\As y
=\As\A(x-\xt)+\As(\A\xt-g)
=(1+\lambda^2)\xt+\lambda^2(x-\xt)+\lambda f-\xt=\lambda^2x+\lambda f$$
we obtain
$$(\M-\lambda)z
=(\As y-\lambda x,\A x-\lambda y)
=(f,g)=h.$$
Furthermore, $z$ depends continuously on $h$, i.e., using
$$\forall\,\varphi\in D(\As\A)\quad
\normy{\A\varphi}^2
=\scpx{\As\A\varphi}{\varphi}
\leq\normx{\As\A\varphi}\normx{\varphi}
\leq\normx{\varphi}^2+\normx{\As\A\varphi}^2$$
we have
\begin{align*}
\norm{z}_{D(\M)}
&\leq\norm{x}_{D(\A)}+\norm{y}_{D(\As)}
\leq c\big(\norm{x}_{D(\A)}+\normx{f}+\normy{g}\big)
\leq c\big(\norm{x-\xt}_{D(\A)}+\norm{\xt}_{D(\A)}+\normz{h}\big)\\
&\leq c\big(\norm{x-\xt}_{D(\As\A)}+\norm{\xt}_{D(\A)}+\normz{h}\big)
\leq c\big(\norm{\xt}_{D(\A)}+\normz{h}\big)
\leq c\normz{h}.
\end{align*}
Therefore, with $\chi:D(\A)\to\Z$ defined by $\chi(x):=(x,\lambda^{-1}\A x)$
we finally obtain that
$$(\M-\lambda)^{-1}
=\chi\big(1+(1+\lambda^2)(\As\A-\lambda^2)^{-1}\big)\L-\lambda^{-1}\iota_{\Y}\pi_{\Y}
:\Z\to D(\M)$$
is bounded and hence $\lambda\in\resol(\M)$.

\ul{(iv):}
$\Rightarrow$: 
Let $\lambda\in\specp(\M)$ and $z:=(x,y)$ be an eigenvector 
to $\lambda$, i.e., $0\neq z\in N(\M-\lambda)$. As
$0=(\M-\lambda)z=(\As y-\lambda x,\A x-\lambda y)$,
neither $x$ nor $y$ can be zero. 
Moreover, since $\M z=\lambda z\in D(\M)$, $z\in N\big((\M+\lambda)(\M-\lambda)\big)$ holds, this is
$$0=(\M+\lambda)(\M-\lambda)z=(\M^2-\lambda^2)z
=\big((\As\A-\lambda^2)x,(\A\As-\lambda^2)y\big).$$
Thus, $0\neq x\in N(\As\A-\lambda^2)$ and $0\neq y\in N(\A\As-\lambda^2)$ 
yielding $\lambda^2\in\specp(\As\A)\cap\specp(\A\As)$.

$\Leftarrow$: 
Let $\lambda^2\in\specp(\As\A)$ and  $x$ be an eigenvector 
to $\lambda^2$, i.e.,
$0\neq x\in N(\As\A-\lambda^2)$.
Then $z_{\pm}:=(x,\pm\lambda^{-1}\A x)\in D(\M)$ and
$$(\M\mp\lambda)z_{\pm}
=(\pm\lambda^{-1}\As\A x\mp\lambda x,\A x-\lambda\lambda^{-1}\A x)
=\pm\lambda^{-1}(\As\A x-\lambda^2x,0)
=0.$$
Hence, $0\neq z_{\pm}\in N(\M\mp\lambda)$, i.e., $\pm\lambda\in\specp(\M)$.
Similar arguments apply to the case $\lambda^2\in\specp(\A\As)$.

\ul{(i'):} 
It holds with (ii')
\begin{align*}
0&\in\specc(\M)&
\equi&&
\exists\quad(\lambda_{n})&\subset\spec(\M)\setminus\{0\}&
\lambda_{n}\to0\\
&&
\equi&&
\exists\quad(\lambda_{n}^2)&\subset\spec(\As\A)\setminus\{0\}&
\lambda_{n}^2\to0\\
&&
\equi&&
0&\in\specc(\As\A)
\end{align*}
and the same is valid for $\A\As$.

\ul{(i''):} 
If $0\in\specp(\M)$, then there exists $0\neq z=(x,y)\in N(\M)$, i.e., $0=\M z=(\As y,\A x)$.
But then $0\neq z\in N(\M^2)$, i.e., $0=\M^2z=(\As\A x,\A\As y)$.
As either $x\neq0$ or $y\neq0$, we get $0\in\specp(\As\A)\cup\specp(\A\As)$.
Now, let e.g.~$0\in\specp(\As\A)$. Then, there exists 
$0\neq x\in N(\As\A)$, i.e., $\As\A x=0$. This implies $\A x=0$ since
$$0=\scpx{\As\A x}{x}=\scpy{\A x}{\A x}=\normy{\A x}^2.$$
Thus $0\neq z:=(x,0)\in N(\M)$ because $\M z=(\As 0,\A x)=0$.
Therefore, $0\in\specp(\M)$.
\end{proof}

We recall the `Helmholtz' decompositions
$$\X=N(\A)\oplus\ol{R(\As)},\quad
D(\A)=N(\A)\oplus\big(D(\A)\cap\ol{R(\As)}\big)$$
and define the restricted operator
$$\cA:=\A|_{D(\cA)}:D(\cA)\subset\ol{R(\As)}\to\ol{R(\A)},\quad
\cA x:=\A x,\quad
x\in D(\cA):=D(\A)\cap\ol{R(\As)}.$$
Let us compute the adjoint $\cAs:D(\cAs)\subset\ol{R(\A)}\to\ol{R(\As)}$.
For $y\in D(\cAs)$ we have for all $\varphi\in D(\cA)$
$$\scpy{\cA\varphi}{y}
=\scpx{\varphi}{\cAs y}.$$
Hence, for all $\psi=\psi_{0}+\varphi\in D(\A)=N(\A)\oplus D(\cA)$ 
we get with $\cA\varphi=\A\varphi=\A\psi$ 
and by $\cAs y\in\ol{R(\As)}\bot N(\A)$
$$\scpy{\A\psi}{y}
=\scpy{\cA\varphi}{y}
=\scpx{\varphi}{\cAs y}
=\scpx{\psi}{\cAs y}.$$
Thus, $y\in D(\As)$ and $\As y=\cAs y$.
This shows $D(\cAs)=D(\As)\cap\ol{R(\A)}$
and $\cAs:=\As|_{D(\cAs)}$, i.e.,
$$\cAs=\As|_{D(\cAs)}:D(\cAs)\subset\ol{R(\A)}\to\ol{R(\As)},\quad
\cAs y=\As y,\quad
y\in D(\cAs)=D(\As)\cap\ol{R(\A)}.$$
Moreover, we have $(\cAs)^{*}=\cA$ 
and the operators $\cAs\cA:D(\cAs\cA)\subset\ol{R(\As)}\to\ol{R(\As)}$ 
and $\cA\cAs:D(\cA\cAs)\subset\ol{R(\A)}\to\ol{R(\A)}$  
are self-adjoint and non-negative.
Finally, also the restriction 
$$\cM:=\M|_{D(\cM)}:D(\cM)\subset\ol{R(\M)}\to\ol{R(\M)},\quad
\cM z:=\M z,\quad
z\in D(\cM):=D(\M)\cap\ol{R(\M)}$$
is self-adjoint and we have
$$\cM=\zmat{0}{\cAs}{\cA}{0},\quad
\cM^2=\zmat{\cAs\cA}{0}{0}{\cA\cAs}.$$

\begin{rem}
\mylabel{helmholtzremapp}
Let us emphasize once more the `Helmholtz' decompositions
\begin{align*}
\X&=N(\A)\oplus\ol{R(\As)},&
D(\A)&=N(\A)\oplus D(\cA),\\
\Y&=N(\As)\oplus\ol{R(\A)},&
D(\As)&=N(\As)\oplus D(\cAs),\\
\Z&=N(\M)\oplus\ol{R(\M)},&
D(\M)&=N(\M)\oplus D(\cM).
\end{align*}
\end{rem}

We introduce the orthogonal projectors
$$\pi_{0}:\Z\to N(\M),\quad
\pi:\Z\to\ol{R(\M)}$$
and note $\pi|_{D(\M)}:D(\M)\to D(\cM)$.

\begin{lem}
\mylabel{lemAsAlemspec}
We have $0\not\in\specp(\cM)\cup\specp(\cAs\cA)\cup\specp(\cA\cAs)$. Moreover:
\begin{itemize}
\item[\bf(i)] 
The inverse operators 
$\cA^{-1}$, $(\cAs)^{-1}$ and $\cM^{-1}$ exist.
\item[\bf(ii)] 
$R(\A)=R(\cA),\quad 
R(\As)=R(\cAs),\quad 
R(\M)=R(\cM)$
\item[\bf(iii)] 
Lemma \ref{spectralem} holds for $\cA$, $\cAs$ and $\cM$ as well,
which follows immediately by
replacing $\X$ by $\ol{R(\As)}$ and $\Y$ by $\ol{R(\A)}$
as well as $\A$ by $\cA$ and $\As$ by $\cAs$.
\end{itemize}
\end{lem}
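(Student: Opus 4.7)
My plan is to deduce all three parts from the orthogonal decompositions recalled in Remark \ref{helmholtzremapp}, proceeding in the order: injectivity, range identities, spectral transfer.

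First I would verify that $\cA$, $\cAs$, and $\cM$ --- and hence also $\cAs\cA$ and $\cA\cAs$ --- are injective. The key observation is that $\X = N(\A) \oplus \ol{R(\As)}$ is an orthogonal direct sum, so $N(\A) \cap \ol{R(\As)} = \{0\}$; since $D(\cA) \subset \ol{R(\As)}$, this yields $N(\cA) = \{0\}$, and symmetrically $N(\cAs) = \{0\}$. Injectivity of $\cM$ is immediate from $\cM(x,y) = (\cAs y, \cA x)$. For the compositions, $\cAs\cA x = 0$ combined with $\normy{\cA x}^2 = \scp{\cAs\cA x}{x}_{\X}$ forces $\cA x = 0$ and then $x = 0$; analogously for $\cA\cAs$. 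Each of these injectivity statements is equivalent to $0 \notin \specp$ of the corresponding operator and to the existence of a well-defined (possibly unbounded) inverse on its range, which already gives (i).

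For (ii), the inclusion $R(\cA) \subset R(\A)$ is built into the definition of $\cA$ as a restriction. For the reverse, given $y = \A x$ with $x \in D(\A)$, I would split $x = x_{0} + x_{c}$ according to $D(\A) = N(\A) \oplus D(\cA)$ and obtain $y = \A x_{c} = \cA x_{c} \in R(\cA)$. The same split applied to $D(\As) = N(\As) \oplus D(\cAs)$ and to $D(\M) = N(\M) \oplus D(\cM)$ produces the two remaining equalities.

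For (iii) there is essentially nothing further to verify: the pair $(\cA, \cAs)$ consists of closed, densely defined operators between $\ol{R(\As)}$ and $\ol{R(\A)}$ with $(\cAs)^{*} = \cA$ (established in the text preceding the lemma), and $\cM$ is assembled from them in exactly the same block form as $\M$ is assembled from $(\A, \As)$. Hence Lemma \ref{spectralem} applies verbatim after the indicated relabeling. The only mildly subtle point --- and the main potential obstacle --- is density of $D(\cA)$ in $\ol{R(\As)}$, needed for the formal interpretation of $\cAs$ as an adjoint in the restricted setting; this follows by orthogonally projecting $D(\A)$, which is dense in $\X$, onto $\ol{R(\As)}$ and using $D(\A) = N(\A) \oplus D(\cA)$ to identify the image of the projection with $D(\cA)$.
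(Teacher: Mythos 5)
Your proof is correct and follows exactly the route the paper intends: the paper states this lemma without a separate proof, treating it as immediate from the orthogonal `Helmholtz' decompositions and the construction of the restricted operators $\cA$, $\cAs$, $\cM$ in the preceding text, and your argument fills in precisely those details (injectivity from $N(\A)\bot\ol{R(\As)}$, the range identities from $D(\A)=N(\A)\oplus D(\cA)$, and the density of $D(\cA)$ in $\ol{R(\As)}$ needed for the adjoint pair $(\cA,\cAs)$). No gaps.
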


\begin{lem}
\mylabel{spectralemM}
It holds
\begin{itemize}
\item[\bf(i)] 
$\spec(\M)\setminus\{0\}=\spec(\cM)\setminus\{0\}$, 
more precisely even
$\specc(\M)\setminus\{0\}=\specc(\cM)\setminus\{0\}$ and
$\specp(\M)\setminus\{0\}=\specp(\cM)\setminus\{0\}$,
\item[\bf(ii)] 
$\resol(\M)\setminus\{0\}=\resol(\cM)\setminus\{0\}$,
\item[\bf(iii)] 
$\ds\specp(\cM^{-1})\setminus\{0\}=\frac{1}{\specp(\cM)\setminus\{0\}}$,
more precisely $N(\cM-\lambda)=N(\cM^{-1}-\lambda^{-1})$
for $\lambda\neq0$.
\end{itemize}
\end{lem}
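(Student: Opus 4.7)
The strategy is to exploit the orthogonal decomposition $\Z=N(\M)\oplus\ol{R(\M)}$ which reduces $\M$: on $N(\M)$ it acts as zero, while on $\ol{R(\M)}$ it coincides with $\cM$. Since $R(\A)$ and $R(\As)$ are closed by Lemma \ref{falemone}, we also have $R(\M)=R(\As)\times R(\A)$ closed, so $\ol{R(\M)}=R(\M)$. In particular, by Lemma \ref{lemAsAlemspec} the inverse $\cM^{-1}:R(\M)\to D(\cM)$ exists as a bounded, everywhere-defined operator on $\ol{R(\M)}$.

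For (ii), fix $\lambda\in\rz\setminus\{0\}$. With respect to the reducing decomposition $\Z=N(\M)\oplus\ol{R(\M)}$ one has
\begin{align*}
\M-\lambda=(-\lambda\,\mathrm{id}_{N(\M)})\oplus(\cM-\lambda).
\end{align*}
The first summand is invertible with bounded inverse $-\lambda^{-1}\mathrm{id}$, hence $\M-\lambda$ has a bounded everywhere-defined inverse if and only if $\cM-\lambda$ does. This gives $\resol(\M)\setminus\{0\}=\resol(\cM)\setminus\{0\}$, and taking complements yields the non-zero parts of $\spec(\M)$ and $\spec(\cM)$ in (i).

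For the refinement in (i), let $\lambda\neq 0$ and $0\neq z\in N(\M-\lambda)$. Then $z=\lambda^{-1}\M z\in R(\M)\subset\ol{R(\M)}$, so $z\in D(\M)\cap\ol{R(\M)}=D(\cM)$ and $\cM z=\M z=\lambda z$. The converse is trivial since $\cM$ is a restriction of $\M$. Hence $\specp(\M)\setminus\{0\}=\specp(\cM)\setminus\{0\}$. The continuous-spectrum equality follows from $\specr(\M)=\specr(\cM)=\emptyset$ (both operators being self-adjoint) together with $\specc=\spec\setminus\specp$ and the previously established equalities.

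For (iii), let $\lambda\neq 0$. If $z\in N(\cM-\lambda)$, then $z\in D(\cM)\subset R(\M)=D(\cM^{-1})$ and, applying $\cM^{-1}$ to $\cM z=\lambda z$, we get $z=\lambda\cM^{-1}z$, i.e. $\cM^{-1}z=\lambda^{-1}z$. Conversely, if $z\in N(\cM^{-1}-\lambda^{-1})$, then $z\in D(\cM^{-1})=R(\M)$ and $\cM^{-1}z=\lambda^{-1}z\in D(\cM)$; applying $\cM$ yields $z=\cM(\cM^{-1}z)=\lambda^{-1}\cM z$, so $\cM z=\lambda z$ and $z\in N(\cM-\lambda)$. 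This proves $N(\cM-\lambda)=N(\cM^{-1}-\lambda^{-1})$ and hence the spectral identity in (iii).

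The only subtle point is justifying that $\cM^{-1}$ is everywhere defined on $\ol{R(\M)}$, i.e.~that $R(\cM)=\ol{R(\M)}$; this is what the closedness of $R(\A)$ and $R(\As)$ (already granted by Lemma \ref{falemone}, which itself rests on the compactness hypothesis \eqref{compactembA}) provides. Once this is secured, every other step is a direct consequence of the reducing decomposition and is essentially formal.
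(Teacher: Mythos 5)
Your argument is correct and is essentially the paper's own: the paper's proof of (ii) is precisely your block-diagonal observation written out by hand (decompose $h=h_{0}+\tilde{h}$ and $z=z_{0}+\zt$ along $\Z=N(\M)\oplus\ol{R(\M)}$ and invert the two blocks separately), and its proofs of (i) and (iii) are the same kernel computations, with $\specc$ likewise handled via $\specc=\spec\setminus\specp$ for self-adjoint operators. The one point you should correct is the appeal to Lemma \ref{falemone} to conclude that $R(\M)$ is closed and that $\cM^{-1}$ is bounded and everywhere defined on $\ol{R(\M)}$: that lemma rests on the compact embedding \eqref{compactembA}, which the paper does \emph{not} assume at this stage --- Lemma \ref{spectralemM} is placed before the ``Results for Compact Resolvents'' subsection, and the boundedness of $\cM^{-1}$ only becomes available later in Lemma \ref{faAAs}. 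So, as literally written, your proof establishes the lemma only under an extra hypothesis. Fortunately nothing in your argument actually needs it. For (ii), the decomposition $\M-\lambda=(-\lambda\,\id)\oplus(\cM-\lambda)$ with respect to $\Z=N(\M)\oplus\ol{R(\M)}$ holds for any self-adjoint $\M$ (the kernel reduces $\M$, and the part of $\M$ in $\ol{R(\M)}$ is $\cM$ by definition), and for $\lambda\neq0$ the first block is always boundedly invertible; closedness of $R(\M)$ plays no role. For (iii) you only need that $\cM$ is injective with inverse defined on $R(\cM)=R(\M)$, which is exactly Lemma \ref{lemAsAlemspec}; note also that the inclusion $D(\cM)\subset R(\M)$ you invoke is false in the general setting, but is not needed, since $z\in N(\cM-\lambda)$ gives $z=\lambda^{-1}\M z\in R(\M)$ directly. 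With these small repairs your proof covers the general (non-compact) setting in which the lemma is stated.
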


\begin{proof}
We start with proving (ii).

$\Rightarrow$: Let $0\neq\lambda\in\resol(\M)$. 
We note that $R(\M-\lambda)=\Z$.
For $h\in\ol{R(\M)}\subset\Z$ we want solve $(\cM-\lambda)z=h$.
$z:=(\M-\lambda)^{-1}h\in D(\M)$ with $(\M-\lambda)z=h$
satisfies $\lambda z=\M z-h\in\ol{R(\M)}$ and thus $z\in D(\cM)$.
As $\norm{z}_{D(\cM)}=\norm{z}_{D(\M)}\leq c\normz{h}=c\norm{h}_{\ol{R(\M)}}$,
$z$ depends continuously on $h$. Hence $\lambda\in\resol(\cM)$.

$\Leftarrow$: Let $0\neq\lambda\in\resol(\cM)$. 
We note that $R(\cM-\lambda)=\ol{R(\M)}$.
For $h\in\Z$ we want solve $(\M-\lambda)z=h$.
Decomposing 
$$h=h_{0}+\tilde{h}\in\Z=N(\M)\oplus\ol{R(\M)},\quad
z=z_{0}+\zt\in D(\M)=N(\M)\oplus D(\cM)$$ 
shows with $\M\zt\in R(\M)$
$$-\lambda z_{0}+(\M-\lambda)\zt=h_{0}+\tilde{h}\quad\equi\quad
-\lambda z_{0}=h_{0}\,\wedge(\M-\lambda)\zt=\tilde{h}.$$
This gives rise to define $z\in D(\M)$ by
$$z:=z_{0}+\zt,\quad
\zt:=(\cM-\lambda)^{-1}\tilde{h}\in D(\cM),\quad
z_{0}:=-\lambda^{-1}h_{0}\in N(\M).$$
Then $(\M-\lambda)z=h_{0}+\tilde{h}=h$ and
$z$ depends continuously on $h$, i.e., 
$$\norm{z}_{D(\M)}
\leq\norm{z_{0}}_{D(\M)}+\norm{\zt}_{D(\M)}
=\normz{z_{0}}+\norm{\zt}_{D(\cM)}
\leq c\big(\normz{h_{0}}+\normz{\tilde{h}}\big)
\leq c\normz{h}.$$
Therefore, $\lambda\in\resol(\M)$.
We note that the inverse $(\M-\lambda)^{-1}:\Z\to D(\M)$ is given by
$$(\M-\lambda)^{-1}\pi-\lambda^{-1}\pi_{0}.$$

(i): Since (ii) implies $\spec(\M)\setminus\{0\}=\spec(\cM)\setminus\{0\}$
we just have to show the assertion for the point spectrum.

$\Rightarrow$: Let $0\neq\lambda\in\specp(\M)$. 
For $0\neq z\in N(\M-\lambda)$ we have $\lambda z=\M z\in R(\M)$.
Hence, $z\in D(\cM)$ and thus $z\in N(\cM-\lambda)$, i.e., $\lambda\in\specp(\cM)$. 

$\Leftarrow$: Of course $N(\cM-\lambda)\subset N(\M-\lambda)$.
Thus, $\lambda\in\specp(\cM)$ implies $\lambda\in\specp(\M)$. 

(iii): For $\lambda\neq0$ we have
\begin{align*}
\lambda&\in\specp(\cM)&
&\equi&
\exists\,0\neq z&\in N(\cM-\lambda)\\
&&
&\equi&
\exists\,0\neq z&\in D(\cM)\quad
\M z=\lambda z\in R(\M)\\
&&
&\equi&
\exists\,0\neq z&\in R(\M)\quad
\cM^{-1}z=\lambda^{-1}\cM^{-1}\cM z=\lambda^{-1}z\in D(\cM)\\
&&
&\equi&
\exists\,0\neq z&\in N(\cM^{-1}-\lambda^{-1})\\
&&
&\equi&
\lambda^{-1}&\in\specp(\cM^{-1}).
\end{align*}
The proof is complete.
\end{proof}

The latter lemma holds true for $\cAs\cA$ and $\cA\cAs$ we well.
More precisely:

\begin{lem}
\mylabel{spectralemAsA}
It holds
\begin{itemize}
\item[\bf(i)] 
$\spec(\As\A)\setminus\{0\}=\spec(\cAs\cA)\setminus\{0\}$, 
more precisely even
$\specc(\As\A)\setminus\{0\}=\specc(\cAs\cA)\setminus\{0\}$ and
$\specp(\As\A)\setminus\{0\}=\specp(\cAs\cA)\setminus\{0\}$,
\item[\bf(ii)] 
$\resol(\As\A)\setminus\{0\}=\resol(\cAs\cA)\setminus\{0\}$,
\item[\bf(iii)] 
$\ds\specp(\big(\cAs\cA)^{-1}\big)\setminus\{0\}=\frac{1}{\specp(\cAs\cA)\setminus\{0\}}$
and $N(\cAs\cA-\lambda^2)=N\big((\cAs\cA)^{-1}-\lambda^{-2}\big)$
for $\lambda\neq0$.
\end{itemize}
The corresponding assertions are valid for $\A\As$ and $\cA\cAs$ as well.
\end{lem}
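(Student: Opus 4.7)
The plan is to follow the template of Lemma \ref{spectralemM}, with $\M$ replaced by $\As\A$ and $\cM$ by $\cAs\cA$. The argument is in fact somewhat simpler here, because we do not need the dual-variable trick used for $\M$: the eigenvalue and resolvent equations are posed directly on $\X$, and the Helmholtz decomposition $\X=N(\A)\oplus\ol{R(\As)}$ together with the identity $N(\As\A)=N(\A)$, which follows from $\scpx{\As\A x}{x}=\normy{\A x}^2$, supplies the required splitting between the kernel of $\As\A$ and the closed range on which $\cAs\cA$ acts.

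For (ii), I would mimic Lemma \ref{spectralemM}(ii). For ``$\Rightarrow$'', given $\lambda^{2}\in\resol(\As\A)$ with $\lambda\neq 0$ and $h\in\ol{R(\As)}$, the element $x:=(\As\A-\lambda^{2})^{-1}h\in D(\As\A)$ satisfies $\lambda^{2}x=\As\A x-h\in\ol{R(\As)}$, hence $x\in D(\As\A)\cap\ol{R(\As)}=D(\cAs\cA)$; bounded dependence on $h$ and injectivity of $\cAs\cA-\lambda^{2}$ are immediate. For ``$\Leftarrow$'', given $\lambda^{2}\in\resol(\cAs\cA)$ and $h=h_{0}+\tilde h\in N(\A)\oplus\ol{R(\As)}$, I would set $x_{0}:=-\lambda^{-2}h_{0}\in N(\As\A)\subset D(\As\A)$ and $\tilde x:=(\cAs\cA-\lambda^{2})^{-1}\tilde h\in D(\cAs\cA)\subset D(\As\A)$ and check that $x:=x_{0}+\tilde x$ solves $(\As\A-\lambda^{2})x=h$ with continuous dependence on $h$.

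For (i), the point spectrum part is handled directly: if $0\neq x\in N(\As\A-\lambda^{2})$ with $\lambda\neq 0$, then $x=\lambda^{-2}\As\A x\in R(\As)\subset\ol{R(\As)}$, hence $x\in D(\cAs\cA)$ and so $x\in N(\cAs\cA-\lambda^{2})$; the reverse inclusion is trivial. Together with (ii) this yields $\spec(\As\A)\setminus\{0\}=\spec(\cAs\cA)\setminus\{0\}$, and because both operators are self-adjoint their residual spectra vanish, so the identity $\specc=\spec\setminus\specp$ gives the continuous spectrum statement as well. For (iii), Lemma \ref{lemAsAlemspec} guarantees the existence of a bounded inverse $(\cAs\cA)^{-1}$ on $\ol{R(\As)}$, and for $\lambda\neq 0$, $x\neq 0$ one has $x\in N(\cAs\cA-\lambda^{2})$ iff $x\in R(\cAs\cA)$ with $(\cAs\cA)^{-1}x=\lambda^{-2}x$, i.e.~$x\in N\big((\cAs\cA)^{-1}-\lambda^{-2}\big)$.

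Finally, the corresponding assertions for $\A\As$ and $\cA\cAs$ are obtained by swapping the roles of $\A$ and $\As$, working on $\Y$ with the decomposition $\Y=N(\As)\oplus\ol{R(\A)}$ and the analogous identity $N(\A\As)=N(\As)$. The only point that requires care, and it is a minor bookkeeping matter, is to keep track of which of the spaces $\X$, $\ol{R(\As)}$, $D(\As\A)$, $D(\cAs\cA)$ each element lives in when moving between $\As\A$ and its restriction $\cAs\cA$; this parallels the $\M$/$\cM$ bookkeeping already carried out in the proof of Lemma \ref{spectralemM}.
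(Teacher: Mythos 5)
Your proof is correct, but it takes a genuinely different route from the paper's. The paper proves (i) and (ii) by transferring everything through the Maxwell-type operator: it chains the equivalences $\lambda^{2}\in\spec(\As\A)\Leftrightarrow\lambda\in\spec(\M)$ (Lemma \ref{spectralem}), $\spec(\M)\setminus\{0\}=\spec(\cM)\setminus\{0\}$ (Lemma \ref{spectralemM}) and $\lambda\in\spec(\cM)\Leftrightarrow\lambda^{2}\in\spec(\cAs\cA)$ (Lemma \ref{lemAsAlemspec}~(iii)), with the same chain for $\specp$, $\specc$ and $\resol$; only part (iii) is argued directly, by the same explicit equivalence you give. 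You instead redo the whole argument directly on $\X$, using $\X=N(\A)\oplus\ol{R(\As)}$, the identity $N(\As\A)=N(\A)$, and the observation that $D(\cAs\cA)=D(\As\A)\cap\ol{R(\As)}$ with $\cAs\cA$ acting as the restriction of $\As\A$. Both arguments are sound. What your route buys is self-containedness and, as you note, genuine simplification: the resolvent equation for $\As\A$ lives entirely in $\X$, so no dual variable, no Lax--Milgram step, and no ansatz $x=\xt+\xh$ are needed; the kernel component is handled by $x_{0}=-\lambda^{-2}h_{0}$ exactly as in Lemma \ref{spectralemM}~(ii), and injectivity follows from the orthogonality of the two components of $(\As\A-\lambda^{2})x$. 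What the paper's route buys is brevity -- the lemma becomes a two-line corollary of work already done -- and it makes the structural point that all spectral information about $\As\A$, $\A\As$, $\cAs\cA$, $\cA\cAs$ is encoded in $\M$ and $\cM$. Your treatment of the continuous spectrum via $\specc=\spec\setminus\specp$ for the self-adjoint operators $\As\A$ and $\cAs\cA$ is consistent with the conventions recalled at the start of Appendix \ref{appfa}, and your part (iii) coincides with the paper's.
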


\begin{proof}
With Lemma \ref{spectralem} (ii'), Lemma \ref{spectralemM} (i)
and Lemma \ref{lemAsAlemspec}
we have for $\lambda\neq0$
$$\lambda^2\in\spec(\As\A)\qequi
\lambda\in\spec(\M)\qequi
\lambda\in\spec(\cM)\qequi
\lambda^2\in\spec(\cAs\cA).$$
and the corresponding results hold for $\specp$, $\specc$ and $\resol$ as well. 
This shows (i) and (ii). To prove (iii) we can follow 
the proof of Lemma \ref{spectralemM} (iii) and see for $\lambda\neq0$
\begin{align*}
\lambda^2&\in\specp(\cAs\cA)&
&\equi&
\exists\,0\neq x&\in N(\cAs\cA-\lambda^2)\\
&&
&\equi&
\exists\,0\neq x&\in D(\cAs\cA)\quad
\As\A x=\lambda^2x\in R(\As)\\
&&
&\equi&
\exists\,0\neq x&\in D(\cAs\cA)\quad
\A x=\lambda^2(\cAs)^{-1}x\in R(\A)\\
&&
&\equi&
\exists\,0\neq x&\in D(\cAs\cA)\quad
x=\lambda^2(\cA)^{-1}(\cAs)^{-1}x\in R(\As)\\
&&
&\equi&
\exists\,0\neq x&\in R(\As)\quad
(\cAs\cA)^{-1}x=\lambda^{-2}x\in D(\cAs\cA)\\
&&
&\equi&
\exists\,0\neq x&\in N\big((\cAs\cA)^{-1}-\lambda^{-2}\big)\\
&&
&\equi&
\lambda^{-2}&\in\specp\big((\cAs\cA)^{-1}\big),
\end{align*}
which completes the proof.
\end{proof}

\subsubsection{Results for Compact Resolvents}

From now on we assume generally that the embedding
\begin{align}
\mylabel{compactembedding}
D(\cA)\hookrightarrow\X
\end{align}
is compact.

\begin{lem}
\mylabel{faAAs}
The following assertions hold:
\begin{itemize}
\item[\bf(i)] 
$\exists\,\ca>0\quad
\forall\,x\in D(\cA)\quad
\normx{x}\leq\ca\normy{\A x}$
\item[\bf(i')] 
$\exists\,\cas>0\quad
\forall\,y\in D(\cAs)\quad
\normy{y}\leq\cas\normx{\As y}$
\item[\bf(i'')] 
$\exists\,\cm>0\quad
\forall\,z\in D(\cM)\quad
\normz{z}\leq\cm\normz{\M z}$
\item[\bf(ii)] 
$R(\A)$, $R(\As)$ and $R(\M)$ are closed.
\item[\bf(iii)] 
$\X=N(\A)\oplus R(\As)$, $\Y=N(\As)\oplus R(\A)$ and $\Z=N(\M)\oplus R(\M)$.
\item[\bf(iv)] 
$\cA^{-1}:R(\A)\to D(\cA)$ is continuous and
$\cA^{-1}:R(\A)\to R(\As)$ is compact.
\item[\bf(iv')] 
$(\cAs)^{-1}:R(\As)\to D(\cAs)$ is continuous and
$(\cAs)^{-1}:R(\As)\to R(\A)$ is compact.
\item[\bf(iv'')] 
$\cM^{-1}:R(\M)\to D(\cM)$ is continuous and
$\cM^{-1}:R(\M)\to R(\M)$ is compact.
\item[\bf(v)] 
$D(\cAs)\hookrightarrow\Y$ is compact.
\item[\bf(v')] 
$D(\cM)\hookrightarrow\Z$ is compact.
\end{itemize}
\end{lem}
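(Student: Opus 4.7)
My plan is to establish the assertions in the order (i) $\Rightarrow$ (ii)(for $\A$) $\Rightarrow$ (iii)(for $\A$) $\Rightarrow$ (iv), then transfer everything to the $\As$-side by a duality argument, and finally deduce the $\M$-assertions from the componentwise results.

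For (i), the approach is the standard Rellich-type contradiction. If no such $\ca$ exists, choose $(x_n)\subset D(\cA)$ with $\normx{x_n}=1$ and $\normy{\A x_n}\to0$. By \eqref{compactembedding} a subsequence converges in $\X$ to some $x$, and by closedness of $\A$ we have $x\in D(\A)$ with $\A x=0$, i.e., $x\in N(\A)$. Since $x_n\in D(\cA)\subset\ol{R(\As)}$ and $\ol{R(\As)}$ is closed, $x\in N(\A)\cap\ol{R(\As)}=\{0\}$, contradicting $\normx{x}=1$. For (ii) the bound (i) gives $R(\A)=R(\cA)$ closed via a Cauchy-sequence argument on preimages in $D(\cA)$; by the closed range theorem $R(\As)$ is closed as well, and $R(\M)$ is then closed componentwise. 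Assertion (iii) follows from \eqref{genortho} together with (ii).

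For (iv), since $\cA:D(\cA)\to R(\A)$ is bijective and (i) yields $\norm{\cA^{-1}}_{R(\A)\to D(\cA)}\leq(1+\ca^2)^{\nicefrac{1}{2}}$, the composition $R(\A)\to D(\cA)\hookrightarrow\X$ is compact by \eqref{compactembedding}, and compactness into $R(\As)\subset\X$ follows since $\cA^{-1}$ takes values in $\ol{R(\As)}=R(\As)$. The key move is now to transfer compactness to $\cAs^{-1}$: by the excerpt's computation $(\cAs)^{*}=\cA$, so viewed as bounded operators between the Hilbert spaces $R(\A)$ and $R(\As)$, $\cA^{-1}$ and $(\cAs)^{-1}$ are mutual Hilbert-space adjoints. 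The Schauder theorem then gives (iv'), which in turn yields (i') via the graph-norm equivalence $\norm{\,\cdot\,}_{D(\cAs)}\leq(1+\cas^2)^{\nicefrac{1}{2}}\normx{\As\,\cdot\,}$ on $D(\cAs)$.

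For (v), given a bounded sequence $(y_n)\subset D(\cAs)$ one sets $\eta_n:=\As y_n\in R(\As)$, which is bounded in $\X$; since $(\cAs)^{-1}:R(\As)\to R(\A)$ is compact by (iv'), $y_n=(\cAs)^{-1}\eta_n$ has a subsequence converging in $R(\A)\subset\Y$, giving compactness of $D(\cAs)\hookrightarrow\Y$. Finally, the $\M$-assertions (i''), (iv''), (v') are immediate from the componentwise structure $D(\cM)\subset D(\cA)\times D(\cAs)$ together with (i), (i'), (iv), (iv'), (v). The main obstacle I anticipate is the duality step producing (iv'): one must verify carefully that, after restriction to the closures of the ranges, $\cA^{-1}$ and $(\cAs)^{-1}$ really are Hilbert-space adjoints of each other, which rests on the identification $\cAs=\As|_{D(\As)\cap\ol{R(\A)}}$ performed just before the lemma.
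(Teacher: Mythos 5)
Your proposal is correct; the logical order is consistent and no step is circular. For (i)--(iv) you follow essentially the same path as the paper: the Rellich-type contradiction for (i), the Cauchy-sequence argument plus the closed range theorem for (ii), and (iii), (iv) as immediate consequences. Where you genuinely diverge is in how you reach the $\As$-side. The paper's main route proves (v) \emph{directly}: for a bounded sequence $(y_n)\subset D(\cAs)$ it picks preimages $x_n\in D(\cA)$ with $\A x_n=y_n$ (possible by (ii)), extracts an $\X$-convergent subsequence via \eqref{compactembedding}, and then uses the identity $\normy{y_{n}-y_{m}}^2=\scpy{\A(x_n-x_m)}{y_n-y_m}=\scpx{x_n-x_m}{\As(y_n-y_m)}$ to conclude that $(y_n)$ is Cauchy in $\Y$; (i') and (iv') then follow from (v) by repeating the argument for (i). You instead first prove (iv') by identifying $(\cAs)^{-1}$ with the Hilbert-space adjoint of the compact operator $\cA^{-1}:R(\A)\to R(\As)$ and invoking Schauder's theorem, and then derive (i') and (v) from (iv'). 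This is exactly the alternative the paper records in its footnotes to (i') and (iv'), so it is a legitimate variant rather than an error. The trade-off: the paper's route is more elementary and self-contained (one inner-product manipulation, no appeal to Schauder), whereas yours front-loads the duality step. You correctly flag the one delicate point of your route, namely verifying $(\cA^{-1})^{*}=(\cAs)^{-1}$ on the (now closed) ranges; this does hold, since for $y\in R(\A)$ and $x=\cAs w\in R(\As)$ one has $\scpx{\cA^{-1}y}{\cAs w}=\scpy{y}{w}=\scpy{y}{(\cAs)^{-1}x}$, using the characterization $\cAs=\As|_{D(\As)\cap\ol{R(\A)}}$ established before the lemma and the surjectivity of $\cAs$ onto $R(\As)$. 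The componentwise deductions of (i''), (iv''), (v') from $D(\cM)=D(\cA)\times D(\cAs)$ match the paper.
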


\begin{proof}
(i):
Let us assume that the estimate is wrong.
Then there exists a sequence $(x_{n})\subset D(\cA)$
with $\normx{x_{n}}=1$ and $\normy{\A x_{n}}\to0$. 
As $(x_{n})$ is bounded in $D(\cA)$, by the general assumption \eqref{compactembedding} 
we can extract a subsequence, again denoted by $(x_{n})$,
with $x_{n}\to x\in\X$. 
Since $\A$ and $\ol{R(\As)}$ are closed, we have $x\in N(\A)\cap N(\A)^{\bot}=\{0\}$,
in contradiction to $1=\normx{x_{n}}\to\normx{x}=0$.

(ii):
For $y\in\ol{R(\A)}=\ol{R(\cA)}$ there exists a sequence $(x_{n})\subset D(\cA)$ 
with $\A x_{n}\to y$. By (i') $(x_{n})$ is a Cauchy sequence in $\X$.
Hence, $(x_{n})$ converges to some $x\in\X$. 
Since $\A$ is closed, we obtain $x\in D(\A)$ and $\A x=y$, 
showing that $R(\A)$ is closed.
By the closed range theorem, see e.g.~\cite[VII, 5, Theorem]{yosidabook}, 
$R(\As)$ is closed as well. Hence, also $R(\M)=R(\As)\times R(\A)$ is closed.

(iii) follows immediately by (ii).

(iv) follows directly by (i) and \eqref{compactembedding}. Indeed,
(i) is equivalent to the continuity of $\cA^{-1}$.

(v): 
Let $(y_{n})$ be a bounded sequence in $D(\cAs)$.
By (ii), $(y_{n})\in R(\A)=R(\cA)$
and hence there exists a sequence $(x_{n})\subset D(\cA)$
with $\A x_{n}=y_{n}$. By (i), $(x_{n})$ is bounded in $D(\cA)$.
By \eqref{compactembedding}, we can extract a subsequence, again denoted by $(x_{n})$,
such that $(x_{n})$ converges in $\X$.
Then, for $x_{n,m}:=x_{n}-x_{m}$ and $y_{n,m}:=y_{n}-y_{m}$ we have
$$\normy{y_{n,m}}^2
=\scpy{\A x_{n,m}}{y_{n,m}}
=\scpx{x_{n,m}}{\As y_{n,m}}
\leq c\normx{x_{n,m}}.$$
Thus, $(y_{n})$ is a Cauchy sequence in $\Y$.

(v') is clear by \eqref{compactembedding} and (v).

(i')\footnote{(i') follows also by (iv'), 
since (i') is equivalent to the continuity of $(\cAs)^{-1}$.} 
follows by (v) analogously to (i).

(i'') follows by (i) and (i').

(iv')\footnote{Another proof of (iv') is the following:
As $\cA^{-1}:R(\A)\to R(\As)$ is compact by (iv), 
so is the adjoint $(\cAs)^{-1}:R(\As)\to R(\A)$ by Schauder's theorem,
see e.g.~\cite[X, 4, Theorem]{yosidabook}.
Especially $(\cAs)^{-1}$ is bounded
and hence also $(\cAs)^{-1}:R(\As)\to D(\As)$.} 
follows by (i') and (v).

(iv) and (iv') imply (iv'').
\end{proof}

Let us recall some facts:
By Lemma \ref{faAAs} (v') for all $\lambda\in\cn$
\begin{align}
\mylabel{compembMl}
D(\cM-\lambda)\hookrightarrow\Z
\end{align}
is compact. For $\lambda\in\resol(\M)\supset\cn\setminus\rz$ we have 
$$N(\M-\lambda)=\{0\},\quad
R(\M-\lambda)=\Z,\quad
N(\cM-\lambda)=\{0\},\quad
R(\cM-\lambda)=R(\M)$$
and the boundedness 
of $(\M-\lambda)^{-1}:\Z\to D(\M)$ is equivalent to
$$\exists\,\cml>0\quad
\forall\,z\in D(\M)\quad
\normz{z}\leq\cml\normz{(\M-\lambda)z},$$
which holds for $\cM$ as well.
For $0\neq\lambda\in\spec(\M)\subset\rz$ we have 
\begin{align*}
\Z&=N(\M-\lambda)\oplus\ol{R(\M-\lambda)},&
R(\M)&=N(\cM-\lambda)\oplus\ol{R(\cM-\lambda)}.
\end{align*}

\begin{lem}
\mylabel{faAAsl}
For $\lambda\in\rz\setminus\{0\}$ the following assertions hold:
\begin{itemize}
\item[\bf(i)] 
$N(\M-\lambda)\subset R(\M)$ and 
$N(\M-\lambda)=N(\cM-\lambda)$ has finite dimension.
\item[\bf(ii)] 
$\exists\,\cml>0\quad
\forall\,z\in D(\cM)\cap N(\M-\lambda)^{\bot}\quad
\normz{z}\leq\cml\normz{(\M-\lambda)z}$
\item[\bf(iii)] 
$R(\cM-\lambda)$ is closed.
\item[\bf(iii')] 
$R(\M-\lambda)$ is closed.
\item[\bf(iii'')] 
$R(\cM-\lambda)=R(\M-\lambda)\cap R(\M)$
\item[\bf(iv)] 
$\Z=N(\M-\lambda)\oplus R(\M-\lambda)$ and 
$R(\M)=N(\cM-\lambda)\oplus R(\cM-\lambda)$.
\item[\bf(v)] 
Let $N(\cM-\lambda)=\{0\}$. Then
$(\cM-\lambda)^{-1}:R(\M)\to D(\cM)$ is continuous and
$(\cM-\lambda)^{-1}:R(\M)\to R(\M)$ is compact.
Especially $\lambda\in\resol(\cM)$.
\item[\bf(v')] 
Let $N(\M-\lambda)=\{0\}$. Then
$(\M-\lambda)^{-1}:\Z\to D(\M)$ is continuous.
Especially $\lambda\in\resol(\M)$.
\end{itemize}
Corresponding results hold for $\As\A$, $\A\As$ resp.~$\cAs\cA$, $\cA\cAs$ we well.
\end{lem}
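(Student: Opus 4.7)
The linchpin of the whole lemma is the compact embedding $D(\cM)\hookrightarrow\Z$ from Lemma \ref{faAAs} (v'), which together with the fact that the norms of $D(\cM-\lambda)$ and $D(\cM)$ are equivalent gives compactness of $D(\cM-\lambda)\hookrightarrow\Z$ for every $\lambda\in\cn$, cf.~\eqref{compembMl}. Throughout the proof I use that $\M$ (and hence $\cM$) is self-adjoint, so $R(\M-\lambda)^{\bot}=N(\M-\lambda)$ for $\lambda\in\rz$, and I will lean on the `Helmholtz' decompositions collected in Remark \ref{helmholtzremapp}.

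First I would prove (i): if $z\in N(\M-\lambda)$, then $\lambda z=\M z\in R(\M)$; since $\lambda\neq0$ this forces $z\in R(\M)\subset\ol{R(\M)}$, hence $z\in D(\M)\cap\ol{R(\M)}=D(\cM)$ and thus $z\in N(\cM-\lambda)$. Conversely $N(\cM-\lambda)\subset N(\M-\lambda)$ is trivial. Finite dimensionality is immediate: the $\Z$-unit ball of $N(\M-\lambda)$ is bounded in $D(\cM)$, hence precompact in $\Z$ by \eqref{compembMl}, and being $\Z$-closed it is compact in $\Z$, so $N(\M-\lambda)$ is finite dimensional. For (ii) I would argue by contradiction in the standard way: a putative sequence $(z_{n})\subset D(\cM)\cap N(\M-\lambda)^{\bot}$ with $\normz{z_{n}}=1$ and $(\M-\lambda)z_{n}\to0$ is bounded in $D(\cM)$ (via $\M z_{n}=(\M-\lambda)z_{n}+\lambda z_{n}$), so by \eqref{compembMl} a subsequence converges in $\Z$ to some $z$; closedness of $\M$ yields $z\in D(\M)$ and $(\M-\lambda)z=0$, i.e.~$z\in N(\M-\lambda)$, while closedness of $N(\M-\lambda)^{\bot}$ forces $z\in N(\M-\lambda)^{\bot}$, so $z=0$, contradicting $\normz{z}=1$.

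Next come the closedness statements. For (iii), given $y_{n}=(\cM-\lambda)z_{n}\to y$ with $z_{n}\in D(\cM)$, I project $z_{n}$ onto $N(\cM-\lambda)^{\bot}$ inside $\ol{R(\M)}$ (which stays in $D(\cM)$ because $N(\cM-\lambda)\subset D(\cM)$ by (i)), apply (ii) to see $(z_{n})$ is Cauchy in $\Z$, pass to the limit by closedness of $\cM$, and obtain $y\in R(\cM-\lambda)$. For (iii') I would decompose $z_{n}=z_{n,0}+\zt_{n}\in N(\M)\oplus D(\cM)$, observe that
$$(\M-\lambda)z_{n}=\underbrace{-\lambda z_{n,0}}_{\in N(\M)}+\underbrace{(\cM-\lambda)\zt_{n}}_{\in R(\M)},$$
split the limit along this orthogonal decomposition, use (iii) on the $R(\M)$ part, and $-\lambda z_{n,0}$ converges automatically. (iii'') drops out of the very same orthogonal decomposition: if $w=(\M-\lambda)z\in R(\M)$ with $z=z_{0}+\zt$, then $-\lambda z_{0}=0$ forces $z_{0}=0$, giving $w=(\cM-\lambda)\zt\in R(\cM-\lambda)$, while the reverse inclusion is trivial. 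Part (iv) is then immediate from closedness plus self-adjointness: $R(\M-\lambda)^{\bot}=N(\M-\lambda)$ and similarly for $\cM$ (working inside $\ol{R(\M)}$).

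Finally (v) and (v') are easy consequences. If $N(\cM-\lambda)=\{0\}$ then (iv) yields $R(\cM-\lambda)=R(\M)$, so $(\cM-\lambda)^{-1}:R(\M)\to D(\cM)$ exists and (ii) gives continuity; composing with the compact embedding $D(\cM)\hookrightarrow\Z$ (restricted to $R(\M)$) yields compactness $R(\M)\to R(\M)$, whence $\lambda\in\resol(\cM)$. For (v') I combine (i) with (v): $N(\M-\lambda)=\{0\}$ implies $N(\cM-\lambda)=\{0\}$, and writing any $z\in D(\M)$ as $z_{0}+\zt\in N(\M)\oplus D(\cM)$, orthogonality of the two summands of $(\M-\lambda)z$ yields
$$\normz{(\M-\lambda)z}^{2}=\lambda^{2}\normz{z_{0}}^{2}+\normz{(\cM-\lambda)\zt}^{2}\geq c\normz{z}^{2}$$
via (ii) applied to $\zt$, proving the analogue of the Poincar\'e-type bound on $D(\M)$; together with (iii') and $R(\M-\lambda)^{\bot}=N(\M-\lambda)=\{0\}$ this gives $R(\M-\lambda)=\Z$ and the continuity of $(\M-\lambda)^{-1}:\Z\to D(\M)$. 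The main obstacle is really bookkeeping: one has to track the interplay between $\M$ and $\cM$ via the Helmholtz decomposition $\Z=N(\M)\oplus\ol{R(\M)}$ carefully enough to transfer each property between the two operators, but the single analytic input is always the compact embedding \eqref{compembMl}.
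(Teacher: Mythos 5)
Your proposal is correct and follows essentially the same route as the paper: the compact embedding \eqref{compembMl} drives the contradiction argument for (ii), the orthogonal decompositions $\Z=N(\M-\lambda)\oplus\ol{R(\M-\lambda)}$ and $\Z=N(\M)\oplus R(\M)$ handle (iii)--(iv), and (v), (v') follow as stated. The only cosmetic deviations are that you prove (iii'') directly from the $N(\M)\oplus R(\M)$ splitting rather than from (iii') together with the identity $R(\M)\cap\ol{R(\M-\lambda)}=R(\cM-\lambda)$, and that in (v') you give the a priori estimate on $D(\M)$ explicitly instead of invoking Lemma \ref{spectralemM} (ii), whose proof is the same computation.
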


\begin{proof}
It is enough to consider $0\neq\lambda\in\spec(\M)\subset\rz$.

(i): 
Of course, $N(\cM-\lambda)\subset N(\M-\lambda)$. 
For $z\in N(\M-\lambda)$ we have $\M z=\lambda z$.
Thus $z\in R(\M)$, i.e., $z\in D(\cM)$. Hence $z\in N(\cM-\lambda)$.
By \eqref{compembMl} the unit ball in $N(\cM-\lambda)$ is compact, i.e.,
$\dim N(\cM-\lambda)<\infty$.

(ii): 
If the estimate is wrong, then there exists a sequence 
$(z_{n})\subset D(\cM)\cap N(\cM-\lambda)^{\bot}$
with $\normz{z_{n}}=1$ and $\normz{(\M-\lambda)z_{n}}\to0$. 
By \eqref{compembMl}
we can extract a subsequence, again denoted by $(z_{n})$,
with $z_{n}\to z\in\Z$. Moreover, 
$\M z_{n}=(\M-\lambda)z_{n}+\lambda z_{n}\to\lambda z$.
As $\M$ and $N(\cM-\lambda)^{\bot}$ are closed, 
$z$ belongs to $N(\cM-\lambda)\cap N(\cM-\lambda)^{\bot}=\{0\}$,
in contradiction to $1=\normz{z_{n}}\to\normz{z}=0$.

(iii):
Let $h\in\ol{R(\cM-\lambda)}$. Then there exists a sequence $(z_{n})\subset D(\cM)$ 
such that $(\M-\lambda)z_{n}=:h_{n}\to h$. Decomposing 
$z_{n}=z_{n,0}+\zt_{n}\in N(\M-\lambda)\oplus\ol{R(\M-\lambda)}$
shows $(\M-\lambda)\zt_{n}=h_{n}$ and $\zt_{n}\in D(\cM)\cap N(\M-\lambda)^{\bot}$.
By (ii) $(\zt_{n})$ is a Cauchy sequence in $\Z$
converging to some $z\in\Z$. Moreover,
$\M\zt_{n}=(\M-\lambda)\zt_{n}+\lambda\zt_{n}\to h+\lambda z$.
As $\cM$ is closed, we obtain $z\in D(\cM)$ and $(\M-\lambda)z=h$, 
i.e., $h\in R(\cM-\lambda)$.

(iii'):
Let $h\in\ol{R(\M-\lambda)}$. By (i) we have
$R(\M)=N(\M-\lambda)\oplus\big(R(\M)\cap\ol{R(\M-\lambda})\big)$
and hence it holds 
\begin{align}
\mylabel{RMMLcML}
R(\M)\cap\ol{R(\M-\lambda)}=R(\cM-\lambda)
\end{align}
by (iii). Let us decompose $h=h_{0}+\tilde{h}\in N(\M)\oplus R(\M)$.
As $(\M-\lambda)h_{0}=-\lambda h_{0}\in R(\M-\lambda)$, we get 
$\tilde{h}\in R(\M)\cap\ol{R(\M-\lambda)}$. 
Hence $\tilde{h}\in R(\cM-\lambda)\subset R(\M-\lambda)$
and thus $h\in R(\M-\lambda)$.

(iii'') follows by (iii') and \eqref{RMMLcML}.

(iv) follows by (iii) and (iii').

(v): 
If $N(\M-\lambda)=\{0\}$, then $R(\M-\lambda)=\Z$ and $R(\cM-\lambda)=R(\M)$.
By (ii) $(\cM-\lambda)^{-1}:R(\M)\to D(\cM)$ is continuous, more precisely,
for $h\in R(\M)$ we have $z:=(\cM-\lambda)^{-1}h\in D(\cM)$ and hence
$\normz{z}\leq\cml\normz{h}$.

(v'): 
By (i), (v) and Lemma \ref{spectralemM} (ii) 
we get $\lambda\in\resol(\cM)\setminus\{0\}=\resol(\M)\setminus\{0\}$.
Hence, $(\M-\lambda)^{-1}:\Z\to D(\M)$ is continuous.
\end{proof}

\begin{theo}
\mylabel{ONBtheo}
$\cM$ has a pure point spectrum, which is contained in $\rz\setminus\{0\}$
and point symmetric to the origin. More precisely, 
$$-\specp(\cM)
=\specp(\cM)
=\spec(\cM)
=\spec(\M)\setminus\{0\}
=\specp(\M)\setminus\{0\}$$
and
\begin{align*}
\spec(\cM)^2
&=\specp(\cAs\cA)
=\spec(\cAs\cA)
=\spec(\As\A)\setminus\{0\}
=\specp(\As\A)\setminus\{0\}\\
&=\specp(\cA\cAs)
=\spec(\cA\cAs)
=\spec(\A\As)\setminus\{0\}
=\specp(\A\As)\setminus\{0\}
\end{align*}
as well as 
$$\resol(\cM)\ni 0\in
\begin{cases}
\specp(\M)&\text{, if }N(\M)\neq\{0\},\\
\resol(\M)&\text{, if }N(\M)=\{0\}
\end{cases}$$
hold. Moreover, there exist sequences of eigenvalues and eigenvectors 
$$(\lambda_{n})_{n\in\nz}\subset(0,\infty),\quad
(z_{n}^{\pm})_{n\in\nz}=\big((x_{n},y_{n}^{\pm})\big)_{n\in\nz}\subset D(\cM),$$
which might be finite or empty if (e.g.) $\A$ is bounded, such that the following holds:
\begin{itemize}
\item[\bf(i)]
$\spec(\cM)=(\lambda_{n})\cup(-\lambda_{n})$ and 
$\spec(\cM)^2=\spec(\cAs\cA)=\spec(\cA\cAs)=(\lambda_{n}^2)$.
\item[\bf(ii)]
$(\lambda_{n})$ is monotone increasing with
$\lambda_{n}\to\infty$, if $(\lambda_{n})$ is not finite.
\item[\bf(iii)]
$(\M\mp\lambda_{n})z_{n}^{\pm}=0$ holds for all $n$, i.e.,
$\A x_{n}=\pm\lambda_{n}y_{n}^{\pm}$ and $\As y_{n}^{\pm}=\pm\lambda_{n}x_{n}$ and thus
$z_{n}^{\pm}
=(x_{n},\pm\lambda_{n}^{-1}\A x_{n})
=(\pm\lambda_{n}^{-1}\As y_{n}^{\pm},y_{n}^{\pm})$.
\item[\bf(iii')]
$(\M^2-\lambda_{n}^2)z_{n}^{\pm}=0$ holds for all $n$, i.e.,
$\As\A x_{n}=\lambda_{n}^2x_{n}$ and $\A\As y_{n}^{\pm}=\lambda_{n}^2y_{n}^{\pm}$.
\item[\bf(iv)]
$(x_{n})$ is a complete orthonormal system in $R(\As)$, i.e., 
\begin{align*}
\forall\,x&\in R(\As)&
x&=\sum_{n=1}^{\infty}\xi_{n}x_{n},
\intertext{and furthermore}
\forall\,\xt&=x_{0}+x\in\X=N(\A)\oplus R(\As)&
x&=\sum_{n=1}^{\infty}\xi_{n}x_{n},\\
\forall\,\xt&=x_{0}+x\in D(\A)=N(\A)\oplus D(\cA)&
\A\xt=\A x&=\pm\sum_{n=1}^{\infty}\lambda_{n}\xi_{n}y_{n}^{\pm},\\
\forall\,x&\in D(\As\A)&
\As\A x&=\sum_{n=1}^{\infty}\lambda_{n}^2\xi_{n}x_{n},
\end{align*}
where $\xi_{n}=\scpx{x}{x_{n}}=\scpx{\xt}{x_{n}}$. 
Moreover, $\normx{\xt}^2=\normx{x_{0}}^2+\normx{x}^2$ and
$$\normx{x}^2=\sum_{n=1}^{\infty}\xi_{n}^2,\quad
\normy{\A x}^2=\sum_{n=1}^{\infty}\lambda_{n}^2\xi_{n}^2,\quad
\normx{\As\A x}^2=\sum_{n=1}^{\infty}\lambda_{n}^4\xi_{n}^2.$$
\item[\bf(iv')]
$(y_{n}^{\pm})$ is a complete orthonormal system in $R(\A)$, i.e., 
\begin{align*}
\forall\,y&\in R(\A)&
y&=\sum_{n=1}^{\infty}\zeta_{n}^{\pm}y_{n}^{\pm},
\intertext{and furthermore}
\forall\,\yt&=y_{0}+y\in\Y=N(\As)\oplus R(\A)&
y&=\sum_{n=1}^{\infty}\zeta_{n}^{\pm}y_{n}^{\pm},\\
\forall\,\yt&=y_{0}+y\in D(\As)=N(\As)\oplus D(\cAs)&
\As\yt=\As y&=\pm\sum_{n=1}^{\infty}\lambda_{n}\zeta_{n}^{\pm}x_{n},\\
\forall\,y&\in D(\A\As)&
\A\As y&=\sum_{n=1}^{\infty}\lambda_{n}^2\zeta_{n}^{\pm}y_{n}^{\pm},
\end{align*}
where $\zeta_{n}^{\pm}=\scpy{y}{y_{n}^{\pm}}=\scpy{\yt}{y_{n}^{\pm}}$.
Moreover, $\normy{\yt}^2=\normy{y_{0}}^2+\normy{y}^2$ and
$$\normy{y}^2=\sum_{n=1}^{\infty}(\zeta_{n}^{\pm})^2,\quad
\normx{\As y}^2=\sum_{n=1}^{\infty}\lambda_{n}^2(\zeta_{n}^{\pm})^2,\quad
\normy{\A\As y}^2=\sum_{n=1}^{\infty}\lambda_{n}^4(\zeta_{n}^{\pm})^2.$$
\end{itemize}
\end{theo}

\begin{proof}
By Lemma \ref{faAAs} (iv'') we have $0\in\resol(\cM)$.
$\M=\cM$ holds if $N(\M)=\{0\}$.
By Lemma \ref{faAAsl} (v) $\cM$ has a pure point spectrum
and by Lemma \ref{faAAsl} (v') $\spec(\M)\setminus\{0\}=\specp(\M)\setminus\{0\}$.
By Lemma \ref{spectralemM} (i) we have 
$\specp(\cM)=\specp(\cM)\setminus\{0\}=\specp(\M)\setminus\{0\}$.
By Lemma \ref{spectralem} (v) the spectra are point symmetric to the origin.
The other assertions about the spectra follow immediately by 
Lemmas \ref{spectralem}, \ref{spectralemM}, \ref{spectralemAsA} 
and Lemma \ref{lemAsAlemspec}.

As $\cA^{-1}:R(\A)\to R(\As)$ or $(\cAs)^{-1}:R(\As)\to R(\A)$
are compact by Lemma \ref{faAAs} (iv) or (iv'),
so is e.g.~$(\cAs\cA)^{-1}:R(\As)\to R(\As)$. 
Moreover, $(\cAs\cA)^{-1}$ is self-adjoint and positive.
Let us assume that $\A$ is unbounded\footnote{If $\A$ is bounded,
the sequences $(\lambda_{n})$ and $(z_{n}^{\pm})$ are finite.}.
By the spectral theorem for self-adjoint, compact and non-negative operators
there exists a monotone decreasing sequence $(\lambda_{n}^{-1})_{n\in\nz}\subset(0,\infty)$ 
converging to zero and a sequence $(x_{n})_{n\in\nz}\subset R(\As)$, 
such that $\lambda_{n}^{-2}$ is an eigenvalue to the eigenvector
$x_{n}$ of $(\cAs\cA)^{-1}$, i.e., $(\cAs\cA)^{-1}x_{n}=\lambda_{n}^{-2}x_{n}$. 
Moreover, $(x_{n})$ is a complete orthonormal system in $R(\As)$,
i.e., for all $x\in R(\As)$ we have
$$x=\sum_{n=1}^{\infty}\xi_{n}(x)x_{n},\quad
\xi_{n}(x)=\scpx{x}{x_{n}}.$$
$(x_{n})\subset D(\cAs\cA)$ is also 
a complete orthonormal system of eigenvectors of $\cAs\cA$
since $\As\A x_{n}=\lambda_{n}^{2}x_{n}$.
Defining 
$$y_{n}^{\pm}:=\pm\lambda_{n}^{-1}\A x_{n}\in D(\cAs)$$ 
we see
$\As y_{n}^{\pm}=\pm\lambda_{n}x_{n}\in D(\cA)$.
Hence, $y_{n}^{\pm}\in D(\cA\cAs)$ with
$\A\As y_{n}^{\pm}=\pm\lambda_{n}\A x_{n}=\lambda_{n}^{2}y_{n}^{\pm}$, i.e.,
$y_{n}^{\pm}$ is an eigenvector of $\cA\cAs$ to the eigenvalue $\lambda_{n}^{2}$.
For all $y\in R(\A)$ with $y=\A x$ for some $x\in D(\A)$ we have
\begin{align}
\mylabel{ynxnform}
\scpy{y}{y_{n}^{\pm}}
=\scpx{x}{\As y_{n}^{\pm}}
=\pm\lambda_{n}\scpx{x}{x_{n}}.
\end{align}
This shows two things. First, 
putting $y:=y_{m}^{\pm}=\A(\pm\lambda_{m}^{-1}x_{m})$ we get
$$\scpy{y_{m}^{\pm}}{y_{n}^{\pm}}
=\frac{\lambda_{n}}{\lambda_{m}}\scpx{x_{m}}{x_{n}},$$
which shows that $(y_{n}^{+})$ and $(y_{n}^{-})$ are both orthonormal systems in $R(\A)$,
and second, that they are even complete in $R(\A)$.
Thus, for all $y\in R(\A)$ we obtain
$$y=\sum_{n=1}^{\infty}\zeta_{n}^{\pm}(y)y_{n}^{\pm},\quad
\zeta_{n}^{\pm}(y)=\scpy{y}{y_{n}^{\pm}}.$$
A little more careful inspection shows the following:
For all $y=\A x\in R(\A)$ with $x\in D(\A)$ we have again with \eqref{ynxnform}
\begin{align*}
y&=\sum_{n=1}^{\infty}\zeta_{n}^{\pm}(y)y_{n}^{\pm}
=\pm\sum_{n=1}^{\infty}\lambda_{n}\xi_{n}(x)y_{n}^{\pm}
=\sum_{n=1}^{\infty}\xi_{n}(x)\A x_{n},\\
\zeta_{n}^{\pm}(y)
&=\scpy{y}{y_{n}^{\pm}}
=\pm\lambda_{n}\scpx{x}{x_{n}}
=\pm\lambda_{n}\xi_{n}(x).
\intertext{If even $y=\A\As\yt\in R(\A\As)$ with $\yt\in D(\A\As)$ we see}
y&=\sum_{n=1}^{\infty}\zeta_{n}^{\pm}(y)y_{n}^{\pm}
=\sum_{n=1}^{\infty}\lambda_{n}^2\zeta_{n}^{\pm}(\yt)y_{n}^{\pm}
=\sum_{n=1}^{\infty}\zeta_{n}^{\pm}(\yt)\A\As y_{n}^{\pm},\\
\zeta_{n}^{\pm}(y)
&=\scpy{\yt}{\A\As y_{n}^{\pm}}
=\lambda_{n}^2\scpy{\yt}{y_{n}^{\pm}}
=\lambda_{n}^2\zeta_{n}^{\pm}(\yt).
\intertext{Analogously for some $x=\As y\in R(\As)$ with $y\in D(\As)$ it holds}
x&=\sum_{n=1}^{\infty}\xi_{n}(x)x_{n}
=\pm\sum_{n=1}^{\infty}\lambda_{n}\zeta_{n}^{\pm}(y)x_{n}
=\sum_{n=1}^{\infty}\zeta_{n}^{\pm}(y)\As y_{n}^{\pm},\\
\xi_{n}(x)
&=\scpx{x}{x_{n}}
=\scpy{y}{\A x_{n}}
=\pm\lambda_{n}\scpy{y}{y_{n}^{\pm}}
=\pm\lambda_{n}\zeta_{n}^{\pm}(y).
\intertext{If even $x=\As\A\xt\in R(\As\A)$ with $\xt\in D(\As\A)$ we have}
x&=\sum_{n=1}^{\infty}\xi_{n}(x)x_{n}
=\sum_{n=1}^{\infty}\lambda_{n}^2\xi_{n}(\xt)x_{n}
=\sum_{n=1}^{\infty}\xi_{n}(\xt)\As\A x_{n},\\
\xi_{n}(x)
&=\scpx{\xt}{\As\A x_{n}}
=\lambda_{n}^2\scpx{\xt}{x_{n}}
=\lambda_{n}^2\xi_{n}(\xt).
\end{align*}
For $z_{n}^{\pm}:=(x_{n},y_{n}^{\pm})\in D(\cM)$ we have
$$\M z_{n}^{\pm}
=(\As y_{n}^{\pm},\A x_{n})
=\pm\lambda_{n}(x_{n},y_{n}^{\pm})
=\pm\lambda_{n}z_{n}^{\pm}.$$
Hence, $z_{n}^{\pm}$ is an eigenvector 
to the eigenvalue $\pm\lambda_{n}$ of $\M$, i.e., 
$z_{n}^{\pm}\in N(\M\mp\lambda_{n})$.
Of course, $z_{n}^{\pm}$ is also an eigenvector
to the eigenvalue $\lambda_{n}^2$ of $\M^2$ since
$$\zmat{\As\A-\lambda_{n}^2}{0}{0}{\A\As-\lambda_{n}^2}
=\M^2-\lambda_{n}^2=(\M\pm\lambda_{n})(\M\mp\lambda_{n}).$$
The assertions about the norms follow immediately 
by orthogonality and the continuity of the norms,
concluding the proof.
\end{proof}

\begin{cor}
\mylabel{eigenvaluesmin}
It holds
$$\lambda_{\ell}^2
=\normy{\A x_{\ell}}^2
=\min_{\substack{0\neq x\in D(\cA)\\x\bot_{X}\{x_{1},\dots,x_{\ell-1}\}}}
\frac{\normy{\A x}^2}{\normx{x}^2}
=\min_{\substack{0\neq y\in D(\cAs)\\y\bot_{Y}\{y_{1}^{\pm},\dots,y_{\ell-1}^{\pm}\}}}
\frac{\normx{\As y}^2}{\normy{y}^2}
=\normx{\As y_{\ell}^{\pm}}^2,$$
especially 
$$\lambda_{1}^2
=\min_{0\neq x\in D(\cA)}\frac{\normy{\A x}^2}{\normx{x}^2}
=\min_{0\neq y\in D(\cAs)}\frac{\normx{\As y}^2}{\normy{y}^2}.$$
\end{cor}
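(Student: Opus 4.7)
The plan is to read the corollary off directly from the spectral expansion provided by Theorem \ref{ONBtheo}, since the complete orthonormal systems $(x_n)$ in $R(\As)$ and $(y_n^{\pm})$ in $R(\A)$ together with monotonicity of $(\lambda_n)$ already contain all the information needed. No separate min-max argument is required.

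First I would establish the endpoint equalities. Since $x_\ell \in D(\cAs\cA)$ with $\As\A x_\ell = \lambda_\ell^2 x_\ell$ and $\normx{x_\ell}=1$, one has
\begin{equation*}
\normy{\A x_\ell}^2 = \scpx{\As\A x_\ell}{x_\ell} = \lambda_\ell^2,
\end{equation*}
and the analogous identity $\normx{\As y_\ell^{\pm}}^2 = \lambda_\ell^2$ follows from $\A\As y_\ell^{\pm} = \lambda_\ell^2 y_\ell^{\pm}$.

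Next, for the first minimization, take $x \in D(\cA) \subset R(\As)$ and expand $x = \sum_n \xi_n x_n$ with $\xi_n = \scpx{x}{x_n}$. By Theorem \ref{ONBtheo} (iv),
\begin{equation*}
\normx{x}^2 = \sum_n \xi_n^2, \qquad \normy{\A x}^2 = \sum_n \lambda_n^2 \xi_n^2.
\end{equation*}
The orthogonality constraint $x \bot_{\X} \{x_1,\dots,x_{\ell-1}\}$ forces $\xi_1 = \cdots = \xi_{\ell-1} = 0$, and since $(\lambda_n)$ is monotone increasing,
\begin{equation*}
\normy{\A x}^2 = \sum_{n \geq \ell} \lambda_n^2 \xi_n^2 \geq \lambda_\ell^2 \sum_{n \geq \ell} \xi_n^2 = \lambda_\ell^2 \normx{x}^2.
\end{equation*}
Equality is attained at $x = x_\ell$ by the endpoint identity, giving the claimed minimum (and in particular showing the infimum is a genuine minimum). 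The second minimization is obtained by the same argument applied to the ONS $(y_n^{\pm})$ of $R(\A)$ using Theorem \ref{ONBtheo} (iv').

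I expect no real obstacle: all the heavy lifting — self-adjointness, compactness of $(\cAs\cA)^{-1}$ and $(\cA\cAs)^{-1}$, the ONBs, eigenvalue monotonicity, and Parseval-type identities — has already been carried out in Theorem \ref{ONBtheo}. The only minor point to note is that in the case where $\A$ is bounded the sequences $(\lambda_n)$ and $(x_n)$, $(y_n^{\pm})$ are finite, but the argument above is unaffected as long as $\ell$ does not exceed the length of the sequence; the special case $\ell = 1$ recovers the stated global minimum.
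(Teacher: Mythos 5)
Your proposal is correct and follows essentially the same route as the paper's proof: expand $x\in D(\cA)$ in the orthonormal system $(x_{n})$ from Theorem \ref{ONBtheo}, use the Parseval identities for $\normx{x}^2$ and $\normy{\A x}^2$, note that the orthogonality constraint kills the first $\ell-1$ coefficients so monotonicity of $(\lambda_{n})$ gives the lower bound, and observe attainment at $x_{\ell}$; the dual statement is handled analogously. No gaps.
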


\begin{proof}
First, we emphasize that the dimensions of the eigenspaces 
$N(\As\A-\lambda_{n}^2)$ and $N(\A\As-\lambda_{n}^2)$ equal.
Using the latter theorem we can represent $x\in D(\cA)$ and $\A x$ by
$$x=\sum_{n=1}^{\infty}\xi_{n}x_{n},\quad
\A x=\pm\sum_{n=1}^{\infty}\lambda_{n}\xi_{n}y_{n}^{\pm}.$$
If additionally $x\bot_{X}\{x_{1},\dots,x_{\ell-1}\}$ we see $\xi_{1}=\dots=\xi_{\ell-1}=0$ and thus
$$\normx{x}^2=\sum_{n=\ell}^{\infty}\xi_{n}^2,\quad
\normy{\A x}^2=\sum_{n=\ell}^{\infty}\lambda_{n}^2\xi_{n}^2
\geq\lambda_{\ell}^2\sum_{n=\ell}^{\infty}\xi_{n}^2
=\lambda_{\ell}^2\normx{x}^2.$$
Therefore, $\ds\frac{\normy{\A x}^2}{\normx{x}^2}\geq\lambda_{\ell}^2$ holds for
all $0\neq x\in D(\cA)$ with $x\bot_{X}\{x_{1},\dots,x_{\ell-1}\}$. On the other hand 
$\normy{\A x_{\ell}}^2=\scpx{x_{\ell}}{\As\A x_{\ell}}=\lambda_{\ell}^2\normx{x_{\ell}}^2$
and $0\neq x_{\ell}\in D(\cA)$ with $x_{\ell}\bot_{X}\{x_{1},\dots,x_{\ell-1}\}$. Thus,
$$\lambda_{\ell}^2
=\normy{\A x_{\ell}}^2
=\min_{\substack{0\neq x\in D(\cA)\\x\bot_{X}\{x_{1},\dots,x_{\ell-1}\}}}
\frac{\normy{\A x}^2}{\normx{x}^2}.$$
The other assertion about $y$ and $\As y$ follows analogously.
\end{proof}

\end{document}